\documentclass{article}

    \PassOptionsToPackage{numbers, compress}{natbib}

\usepackage{graphicx}
 \usepackage[preprint]{neurips_2026}

\usepackage{wrapfig}
\usepackage[utf8]{inputenc} 
\usepackage[T1]{fontenc}    

\usepackage{url}            
\usepackage{booktabs}       
\usepackage{amsfonts}       
\usepackage{nicefrac}       
\usepackage{microtype}      
\usepackage{xcolor}         
\usepackage{amsmath,amssymb,amsfonts,amsthm,mathtools,bm}
\usepackage{array}
\usepackage{enumitem}
\usepackage{subcaption}

\usepackage{hyperref}

\newtheorem{theorem}{Theorem}[section]
\newtheorem{lemma}[theorem]{Lemma}
\newtheorem{proposition}[theorem]{Proposition}
\newtheorem{corollary}[theorem]{Corollary}
\newtheorem{definition}[theorem]{Definition}
\newtheorem{assumption}[theorem]{Assumption}
\newtheorem{remark}[theorem]{Remark}

\title{Continuous-Time Analysis for Minimax and Bilevel Problems}

\author{%
  Hyunwoo Lee \\
  KIAS\\
  \texttt{hyunwoolee@kias.re.kr} \\
  \And
  Jeongyeol Kwon \\
  Meta \\
  \texttt{sortingkwon@gmail.com} \\
  \And
  Dohyun Kwon\thanks{Corresponding author} \\
  Yonsei University / KIAS \\
  \texttt{dohyunkwon@yonsei.ac.kr} \\
}

\begin{document}

\maketitle

\begin{abstract}
We study single-loop gradient-flow dynamics for nested optimization, where the outer variable evolves while auxiliary variables track the inner solution map.
While existing analyses typically rely on problem- and condition-specific Lyapunov constructions, we propose, to our knowledge, the first unified Lyapunov template for continuous-time analysis that covers minimax, bilevel via a lifted penalty formulation, and min--min--max. 
Our proof is modular, built from reusable lemmas that yield a unified characterization of time-scale separation. This characterization bridges regimes from strong convexity/concavity to mere convexity through an error-bound condition, and produces explicit closed-form thresholds that avoid the coupled ratio conditions common in discrete-time analyses. We further compare the penalty dynamics with the ideal hyper-gradient flow, derive a finite-time tracking bound, and discuss an Euler one-step analogue; hypercleaning diagnostics show that the predicted relative time-scale regions remain visible under stable forward-Euler discretization.

\end{abstract}

\section{Introduction}
Modern large-scale machine learning increasingly relies on nested optimization as a core design principle. Canonical examples include hyperparameter optimization, meta-learning, ~\cite{maclaurin2015gradient,domke2012generic,pedregosa2016hyperparameter,franceschi2018bilevel,shaban2019truncated}, adversarial robustness \cite{goodfellow2020generative}, game theory \cite{stackelberg1952theory} and reinforcement learning \cite{konda1999actor, sutton2018reinforcement}. In this work, we focus on the continuous-time dynamics of bilevel optimization~\cite{colson2007overview}, which arise in many important applications with inherently hierarchical two-level optimization structures, formally described as the following:
\begin{equation}\label{eq:problem2}
\tag{\textbf{P2}}
\min_{x\in X}\quad  F(x)\ :=\ f\bigl(x,y^{*}(x)\bigr) 
\quad \text{ s.t. }\quad  y^{*}(x)\ \in\ \arg\min_{y\in Y}\ g(x,y),
\end{equation}
where $f,g:\mathbb{R}^{d_x}\times\mathbb{R}^{d_y}\to\mathbb{R}$ are continuously differentiable. The minimax   is obtained by setting
\begin{equation}\label{eq:P1}
\tag{\textbf{P1}}
g(x,y)\ :=\ -f(x,y).
\end{equation}
Then $y^{*}(x)\in\arg\max_{y\in Y}f(x,y)$, and hence $F(x) = \max_{y\in Y}f(x,y).$ If $g(x,\cdot)$ is strongly convex for every $x\in X$, then the inner minimizer $y^{*}(x)$ is unique and hence well-defined for all $x$ in \ref{eq:problem2}. 

Evaluating $\nabla F(x)$ requires repeatedly solving the inner problem and often involves computing an implicit hyper-gradient involving second-order information. 
A widely used alternative is the single-loop principle~\cite{kwon2023fully, kwon2023penalty}: evolve the outer variable while auxiliary variables track the inner optimizer on-the-fly. In continuous time, this leads naturally to coupled gradient flows~\cite{su2016differential, sharrock2023two} with explicit time-scale parameters controlling how fast the inner dynamics track their instantaneous targets relative to the $x$-dynamics.
Recent work has started to clarify stability and convergence of such coupled dynamics via Lyapunov and time-scale separation viewpoints~\cite{chen2022single, hong2023two, doan2022convergence, lin2025two}.
However, prior analyses are largely formulation-specific and discrete-time, leading to bespoke Lyapunov/absorption arguments and intertwined step-size ratio conditions whose implied time-scale thresholds are often algebraically intricate. As a result, prior case-by-case analyses often obscure the reusable structure and the role of penalty-induced terms in the time-scale absorption step. In contrast, our continuous-time normal form separates deviation-to-residual conversion from time-scale absorption, yielding closed-form threshold conditions and clarifying how the lifted penalty affects the absorption mechanism.

Our main focus is bilevel optimization, but the standard penalty surrogate has an inherently saddle-type structure.
Let $g^*(x):=\min_{z} g(x,z)$ and define
\[
\mathcal L_\lambda(x,y)=f(x,y)+\lambda\bigl(g(x,y)-g^*(x)\bigr),\
\mathcal L_\lambda^*(x)=\min_y \mathcal L_\lambda(x,y).
\]

To avoid evaluating $g^*(x)$ explicitly, we introduce a tracker $z$ and lift the penalty as
\[
E_\lambda(x,y,z):=f(x,y)+\lambda\bigl(g(x,y)-g(x,z)\bigr),
\]
so that $\min_z g(x,z)=g^*(x)$ is recovered by minimizing over $z$.
This yields the lifted min--min--max formulation
$\min_x \min_y \max_z \; E_\lambda(x,y,z)$,
which places bilevel penalty dynamics in the same structural family as minimax and makes the tracking mechanism explicit.
It also cleanly exposes how penalty-induced stiffness enters the Lyapunov absorption step and how the corresponding threshold conditions should be interpreted. Appendix~\ref{sec:minminmax} studies the genuine min--min--max problem
\begin{equation}\tag{\textbf{P3}}\label{eq:P3}
\min_x\min_y\max_z \; E(x,y,z),
\end{equation}
where $E(x,y,z):=f(x,y)-g(x,z)$, and no penalty parameter is involved.
This serves as a calibration case: the same Lyapunov template and time-scale absorption rule apply verbatim, yielding thresholds that are intrinsic to the saddle-type coupling.

\begin{wraptable}{r}{0.4\textwidth}
\centering
\small
\setlength{\tabcolsep}{3.5pt}
\resizebox{\linewidth}{!}{%
\begin{tabular}{@{}lccc@{}}
\toprule
Problem & $\Phi(x)$ & $H_y$ & $H_z$\\
\midrule
Minimax   
& $F$                    
& $F-f$                         
& $0$\\
Bilevel   
& $\mathcal L_\lambda^*$ 
& $\mathcal L_\lambda-\mathcal L_\lambda^*$ 
& $g-g^*$\\
Min-min-max 
& $\mathcal L$           
& $f-f^*$                   
& $g-g^*$\\
\bottomrule
\end{tabular}
}
\caption{Instantiation of the unified Lyapunov blueprint. Here $F=\max_y f(x,y)$, $f=f(x,y)$, $g=g(x,z)$, and $f^*,g^*$ denote inner optima.}
\label{tab:instantiation}
\end{wraptable}
We develop a unified continuous-time analysis for minimax, bilevel, and min--min--max problems. All three are handled by the same value-level Lyapunov template
\[
W:=\Phi(x)+\alpha H_y(\cdot)+\beta H_z(\cdot),
\]
where $\Phi$ is the target outer value and $H_y,H_z\ge 0$ measure tracking gaps.
Differentiating $W$ yields outer descent plus tracking-induced cross terms. Our
proof separates their control into two reusable modules: a deviation-to-residual
conversion, supplied by strong convexity/concavity or an error-bound interface,
and a closed-form time-scale absorption rule. Thus, once
$(\Phi,H_y,H_z)$ are instantiated, the required thresholds follow from the same normal form.

While unified analyses for nested optimization exist
~\cite{chen2021alset,tarzanagh2022fednest}, and Lyapunov analyses are known for coupled saddle dynamics~\cite{feijer2010pd,cherukuri2017saddle}, we are not aware of a value-function-level continuous-time blueprint that yields plug-in time-scale thresholds across minimax envelopes and lifted-penalty bilevel dynamics.

The same modular view clarifies weaker inner geometry. Under strong
convexity/concavity, the conversion constant scales like $1/\mu$; under a local
EB/PL/QG-type condition, it is replaced by an error-bound constant $\kappa$.
Consequently, the thresholds deteriorate through the same $C^2$ dependence,
making explicit how the required time-scale separation changes when strong
convexity is relaxed. This geometry--threshold relation is often less explicit
inside algorithm-specific step-size conditions in discrete-time two-time-scale
analyses~\cite{chen2022single,hong2023two}. We further corroborate this geometric interpretation on Wine and Digits
hypercleaning tasks, a standard bilevel benchmark~\cite{franceschi2018bilevel,shaban2019truncated}.
Under stable forward-Euler discretization, reducing the lower-level
regularization weakens the empirical inner curvature, increases the
deviation-to-residual conversion constant, and shifts the predicted thresholds upward. The phase diagrams show the same above-threshold region as the favorable time-scale regime. This supports our central message: the EB interface exposes how weaker inner geometry changes the conversion constant and shifts the time-scale thresholds, a relationship that is often hidden inside
algorithm-specific step-size conditions in discrete-time two-timescale analyses~\cite{hong2023two}. This connects EB/PL/QG-type
regularity~\cite{liao2024error} directly to the Lyapunov absorption thresholds.

Finally, we compare the penalty-based three-variable dynamics with the ideal bilevel gradient flow
$\dot w=-\nabla F(w)$ and establish a finite-time trajectory tracking bound.
Our result controls $\sup_{t\in[0,T]}\|x(t)-w(t)\|$ and cleanly separates (i) the approximation bias induced by the penalty parameter $\lambda$
from (ii) the residual tracking errors governed by the time-scale separations $\delta>\delta_0(\lambda)$ and $\eta>\eta_0$.
This provides an explicit quantitative link between single-loop tracking dynamics and the hyper-gradient flow they approximate. 

\paragraph{Related Work}
Minimax and bilevel optimization have largely separate literatures: minimax work often studies saddle-point/game dynamics, while bilevel methods typically use implicit differentiation, unrolling, or single-loop approximations \cite{cherukuri2017saddle,ghadimi2018approximation,yang2021provably,hong2020two,khanduri2021near}. Across these settings, the two-time-scale principle provides a common viewpoint from stochastic approximation and dynamical systems \cite{borkar1997twotimescale,feijer2010pd,cherukuri2017saddle}. Existing discrete-time analyses often encode stability through algorithm-specific step-size conditions; our contribution is to isolate a value-function-level continuous-time Lyapunov template that yields plug-in thresholds and makes their dependence on inner geometry explicit.

\section{Preliminaries}
We state several assumptions on \ref{eq:P1} and \ref{eq:problem2} to specify the problem class of interest.
In both settings, the outer objective has the composite form $F(x)=f(x,y^*(x))$ defined through an inner optimizer $y^*(x)$.
We focus on well-conditioned regimes where $y^*(x)$ is uniquely defined and sufficiently regular, so that single-loop tracking dynamics admit clean Lyapunov bounds.
\begin{assumption}\label{ass:1}
We consider \ref{eq:P1} with a continuously differentiable function $f$. There exist constants $\mu>0$ and $L_{xy}\ge 0$ such that the following hold.
\begin{enumerate}
  \item For every $x$, $f(x,\cdot)$ is $\mu$-strongly concave. 
  \item $\nabla_x f(x,y)$ is $L_{xy}$-Lipschitz in $y$, uniformly over $x$.
  \item $F_{\inf}:=\inf_{x\in X} F(x) > -\infty$.
\end{enumerate}
\end{assumption}

\begin{assumption}\label{ass:2}
We consider \ref{eq:problem2} with continuously differentiable functions $f$ and $g$.
There exist constants $\mu>0$, $\rho\ge 0$, and $L_{fx},L_{gx}\ge 0$ such that:
\begin{enumerate}
  \item For every $x$, $g(x,\cdot)$ is $\mu$-strongly convex.
  \item $\nabla_x f(x,y)$ is $L_{fx}$-Lipschitz in $y$, uniformly over $x$.
  \item $\nabla_x g(x,y)$ is $L_{gx}$-Lipschitz in $y$, uniformly over $x$.
  \item $\mathcal{L}_{\lambda,\inf}^{*}:=\inf_{x\in\mathbb{R}^{d_x}} \mathcal L_\lambda^{*}(x) > -\infty.$
  \item For every $x$, $f(x,\cdot)$ is $\rho$--weakly convex.
\end{enumerate}
\end{assumption}

The goal is to find a stationary solution defined as the following:
\begin{definition}[$\varepsilon$-stationarity]\label{def:eps-stationary}
Let $F:\mathbb{R}^{d_x}\to\mathbb{R}$ be differentiable and let $\varepsilon>0$.
We say that a point $x\in\mathbb{R}^{d_x}$ is $\varepsilon$-stationary for $F$
if $\|\nabla F(x)\|^{2}\ \le\ \varepsilon$.
\end{definition}

\paragraph{Notation.}
We use $\|\cdot\|$ and $\langle\cdot,\cdot\rangle$ for the Euclidean norm and inner product.
For a differentiable function $h(x,y)$, $\nabla_x h$ and $\nabla_y h$ denote partial gradients.
\section{Unified Continuous Time Framework for Nested Optimization}\label{sec:unified}
This paper investigates minimax~\ref{eq:P1} and bilevel~\ref{eq:problem2} nested problem through a unified Lyapunov perspective. In addition, we consider analysis for the auxiliary min--min--max problem in Appendix~\ref{sec:minminmax}.
All three share a common obstacle: the outer descent direction depends on an inner optimizer that varies with $x$, which makes the outer-gradient dynamics hard to compute directly.
Therefore, we focus on single-loop coupled dynamics that track inner solutions along the trajectory.
In continuous time, sufficient time-scale separation makes tracking errors dissipative, so the cross terms can be absorbed in a Lyapunov inequality and descent of the relevant outer value function follows.
Building on this, we introduce a common Lyapunov construction that augments the outer value term with nonnegative tracking gaps, and we state two reusable lemmas that serve as shared analysis modules across these settings.

Our ideal outer-gradient flow is given by
\begin{equation}
\label{eq:bi-gf}
    \dot x=-\nabla F(x).
\end{equation}
In \ref{eq:P1} and \ref{eq:problem2}, the outer objective takes the composite form
$F(x)=f\bigl(x,y^*(x)\bigr)$, where $y^*(x)$ is defined implicitly as the optimizer of an inner problem.
As a result, evaluating $\nabla F(x)$ typically requires nontrivial inner computation along the trajectory.
In particular, even for \ref{eq:P1} one has $\nabla F(x)=\nabla_x f\bigl(x,y^*(x)\bigr)$, which depends on the
current inner maximizer $y^*(x)$.
For \ref{eq:problem2}, the dependence is more explicit: $\nabla F(x)$ is an implicit hyper-gradient involving
second-order information,
\begin{equation}\label{eq:hypergrad}
\begin{aligned}
\nabla F(x)
&=\nabla_x f\bigl(x,y^{*}(x)\bigr)-\nabla^2_{xy}g\bigl(x,y^{*}(x)\bigr)\times\bigl(\nabla^2_{yy}g\bigl(x,y^{*}(x)\bigr)\bigr)^{-1}\nabla_y f\bigl(x,y^{*}(x)\bigr).
\end{aligned}
\end{equation} 
Consequently, computing $\nabla F(x)$ in the bilevel setting entails solving the lower-level problem to obtain $y^*(x)$ and solving a linear system involving $\nabla^2_{yy}g$.
To avoid repeatedly resolving inner optimizers and explicitly forming such hyper-gradients, we adopt single-loop first-order coupled dynamics as in \cite{kwon2023fully}. Accordingly, we introduce the coupled flows below together with their time-scale parameters.

\subsection{Coupled Flows and Time Scale Parameters}\label{subsec:unified-flows}

We specify the above single-loop principle by evolving the outer variable together with inner variables, and, when needed, additional tracking variables.
The resulting continuous time systems are parameterized by explicit time-scale factors that control how fast the inner dynamics track their instantaneous targets relative to the $x$-dynamics.

For \ref{eq:P1}, we consider the two-variable gradient flow
\begin{equation}\label{eq:flow-minimax-unified}
\dot x(t) = -\nabla_x f\bigl(x(t),y(t)\bigr),\
\dot y(t) = \gamma\,\nabla_y f\bigl(x(t),y(t)\bigr).
\end{equation}
where $\gamma>0$ scales the ascent speed of $y(t)$.
    When $\gamma$ is sufficiently large, $y(t)$ tracks the maximizer of $f(x(t),\cdot)$ fast enough for the Lyapunov analysis, to yield an ergodic stationarity bound. 

For \ref{eq:problem2}, fix $\lambda>0$ and define the lower-level function $g^*(x):=\min_y g(x,y)$ and the penalized objective
\begin{equation}\label{eq:bilevel-value-f}
\begin{aligned}
\mathcal L_\lambda(x,y)
&:= f(x,y)+\lambda\bigl(g(x,y)-g^*(x)\bigr),\
\mathcal L_\lambda^*(x):= \min_y \mathcal L_\lambda(x,y).
\end{aligned}
\end{equation}
To avoid explicit evaluation of $g^*(x)$ along the trajectory, we introduce a tracker $z(t)$ via the following reformulation:
\begin{equation}\label{eq:Elambda-unified}
E_\lambda(x,y,z):=f(x,y)+\lambda\bigl(g(x,y)-g(x,z)\bigr).
\end{equation}
In this reformulation, maximizing over $z$ replaces $g^*(x)=\min_z g(x,z)$ by an explicit tracker, so that $z(t)$ is driven toward $\arg\min_z g(x(t),z)$ and $y(t)$ descends on $f(x(t),y)+\lambda g(x(t),y)$.
We study the three-variable gradient flow with time-scale parameters $\delta>0$ and $\eta>0$,
\begin{equation}\label{eq:flow-bilevel-unified}
\begin{aligned}
\dot x = -\nabla_x E_\lambda(x,y,z),\
\dot y = -\delta\,\nabla_y E_\lambda(x,y,z),\
\dot z = \eta\,\nabla_z E_\lambda(x,y,z).
\end{aligned}
\end{equation}


\subsection{A Unified Lyapunov Toolkit}\label{subsec:unified-lyap}
We present a common Lyapunov construction for minimax and bilevel problems, and later instantiate the same construction for the auxiliary min--min--max problem in Appendix~\ref{sec:minminmax}. In all cases, the Lyapunov function has the same form: an outer value term plus nonnegative tracking gaps. Its dissipation reduces to controlling cross terms caused by imperfect tracking, which we handle using two shared ingredients: a deviation bound and a time-scale threshold rule that absorbs these terms under sufficient separation.

These two lemmas are the key tools behind the ergodic $O(1/T)$ stationarity guarantees proved in Section~\ref{sec:main-results} for all three problems \ref{eq:P1}--\ref{eq:P31}.







\paragraph{A Unified Lyapunov Construction.}
Across all three settings, our Lyapunov function combines a target value function and nonnegative gap terms~(tracking errors):
\begin{equation}\label{eq:W-template}
W \;:=\; \Phi(x)\;+\;\alpha\,H_{y}(\cdot)\;+\;\beta\,H_{z}(\cdot),
\end{equation}
where $\Phi(x)$ denotes the target outer objective, and $H_{y}\ge 0$ and $H_{z}\ge 0$ are gap terms that quantify the tracking errors of the inner variables.
Along the coupled dynamics, differentiating $W$ yields a descent term for $\Phi$ plus cross terms involving tracking errors.
Choosing the time-scale parameters sufficiently large allows these cross terms to be absorbed, leading to $\dot W\le -c\,\|\nabla \Phi(x(t))\|^2$ and hence an ergodic $O(1/T)$ stationarity bound.

For \ref{eq:P1},
let $F(x):=\max_{y\in Y}f(x,y)$ and define
\[
\Phi(x):=F(x),\quad
H_y:=F(x)-f(x,y),\quad
H_z:= 0.
\]
Accordingly,
\[
W(x,y)\;=\;F(x)+\alpha\,H_y(x,y).
\]
For $\gamma$ above an explicit threshold, $\dot W$ controls $\|\nabla F(x(t))\|^2$, yielding the ergodic stationarity guarantee.

For \ref{eq:problem2},
recall $\mathcal L_\lambda:=f(x,y)+\lambda\bigl(g(x,y)-g^*(x)\bigr)$ and $\mathcal L_\lambda^*(x):=\min_y\mathcal L_\lambda(x,y)$.
Define $\Phi(x):=\mathcal L_\lambda^*(x),$
\[
H_y:=\mathcal L_\lambda(x,y)-\mathcal L_\lambda^*(x), \quad
H_z:=g(x,z)-g^*(x).
\]
Accordingly,
\[
W(x,y,z)\;=\;\mathcal L_\lambda^*(x)+\alpha\,H_y(x,y)+\beta\lambda\,H_z(x,z).
\]
If $\delta\ge\delta_0(\lambda)$ and $\eta\ge\eta_0$, then $\dot W$ yields an ergodic bound on $\|\nabla_x\mathcal L_\lambda^*(x(t))\|^2$. 
In the appendix~\ref{sec:minminmax}, we additionally consider 
\begin{equation}\label{eq:P31}
\tag{\textbf{P3}}
\min_x\min_y\max_z\,\bigl(f(x,y)-g(x,z)\bigr).
\end{equation}

\paragraph{Common Lemma}
Under the unified Lyapunov construction $W=\Phi(x)+\alpha H_y+\beta H_z$, differentiating $W$
along the coupled flow yields the desired descent term in $\|\nabla \Phi(x(t))\|^2$ together with cross terms induced by imperfect inner tracking. 
These cross terms are expressed through $x$--gradient deviations, which cannot be absorbed without relating them to quantities damped by the inner dynamics. We therefore isolate a reusable two-step proof skeleton that applies uniformly to \ref{eq:P1}--\ref{eq:P31}. First, Lemma~\ref{lem:deviation} converts each deviation $\|D_\cdot\|$ into a first-order residual
$\|r_\cdot\|$ of the corresponding inner problem, which serves as a stationarity measure for \ref{eq:P1}--\ref{eq:P31}.
Second, Lemma~\ref{lem:timescale} shows that once the inner time-scale parameters exceed explicit thresholds,
the residual dissipation dominates these cross terms, yielding $\dot W\le -c\,\|\nabla\Phi(x(t))\|^2$.
Integrating this inequality gives the common $O(1/T)$ ergodic stationarity bound for $\Phi$.

After instantiating $(\Phi,H_y,H_z)$ as in Table~\ref{tab:instantiation}, the remaining work in each problem
is to compute the constants appearing in Lemmas~\ref{lem:deviation}--\ref{lem:timescale} via
Lipschitz/strong-convexity parameters and a Young-type bound. 

\begin{lemma}\label{lem:deviation}
Fix $x\in\mathbb R^{d_x}$ and let $\psi(x,\cdot):\mathbb R^{d_u}\to\mathbb R$ be $\mu$--strongly convex
with unique minimizer $u^*(x)$.
Assume $\nabla_x\psi(x,u)$ is $L_{xu}$--Lipschitz in $u$ uniformly over $x$.
Then for all $u\in\mathbb R^{d_u}$, 
\[
\|\nabla_x\psi(x,u)-\nabla_x\psi(x,u^*(x))\|
\ \le\ \frac{L_{xu}}{\mu}\,\|\nabla_u\psi(x,u)\|.
\]
\end{lemma}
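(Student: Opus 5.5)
The argument chains two inequalities: a Lipschitz bound in the inner variable, followed by the standard strong-convexity error bound that converts distance to the minimizer into gradient norm.

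\emph{Step 1 (Lipschitz in $u$).} By the uniform $L_{xu}$-Lipschitz assumption on $\nabla_x\psi(x,\cdot)$, applied with $x$ fixed,
\[
\|\nabla_x\psi(x,u)-\nabla_x\psi(x,u^*(x))\|\le L_{xu}\,\|u-u^*(x)\|.
\]

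\emph{Step 2 (error bound from strong convexity).} Since $\psi(x,\cdot)$ is $\mu$-strongly convex and differentiable, its gradient is strongly monotone:
\[
\langle \nabla_u\psi(x,u)-\nabla_u\psi(x,u'),\,u-u'\rangle\ge \mu\|u-u'\|^2 .
\]
Take $u'=u^*(x)$. First-order optimality of the unconstrained minimizer gives $\nabla_u\psi(x,u^*(x))=0$. Cauchy--Schwarz then yields
\[
\mu\|u-u^*(x)\|^2\le \|\nabla_u\psi(x,u)\|\,\|u-u^*(x)\|.
\]
If $u=u^*(x)$, the claimed inequality is trivial. Otherwise, dividing by $\|u-u^*(x)\|$ gives $\|u-u^*(x)\|\le \mu^{-1}\|\nabla_u\psi(x,u)\|$.

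\emph{Step 3 (combine).} Substituting the bound from Step 2 into Step 1 gives the stated inequality with constant $L_{xu}/\mu$.

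There is no real obstacle. The only points needing care are that $\nabla_u\psi(x,u^*(x))=0$ requires the minimization over $u$ to be unconstrained on $\mathbb R^{d_u}$, as stated in the lemma, and that strong monotonicity must be derived from strong convexity. For the latter, add the two strong-convexity inequalities
\[
\psi(x,u')\ge\psi(x,u)+\langle\nabla_u\psi(x,u),u'-u\rangle+\tfrac\mu2\|u'-u\|^2
\]
and the same inequality with $u$ and $u'$ swapped.
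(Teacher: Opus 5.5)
Your proposal is correct and follows essentially the same route as the paper's proof. The paper likewise uses strong monotonicity with $\nabla_u\psi(x,u^*)=0$ and Cauchy--Schwarz to get $\mu\|u-u^*\|\le\|\nabla_u\psi(x,u)\|$, then combines this with the Lipschitz bound $L_{xu}\|u-u^*\|$; you additionally spell out the $u=u^*(x)$ case and derive strong monotonicity from strong convexity, which the paper takes for granted.
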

In \ref{eq:P1} and \ref{eq:P31}, Lemma~\ref{lem:deviation} is applied directly to the inner objective:
take $\psi=-f$ for the maximization tracker in \ref{eq:P1}, and take $\psi=f$ (for $y$) and $\psi=g$ (for $z$) in \ref{eq:P31}. In \ref{eq:problem2}, we bound the mismatch
$\|\nabla_x\mathcal L_\lambda(x,y)-\nabla_x\mathcal L_\lambda(x,y_\lambda^*(x))\|$.
Since $\mathcal L_\lambda(x,y)=\phi_\lambda(x,y)-\lambda g^*(x)$ with $\phi_\lambda=f+\lambda g$ and $g^*$ independent of $y$,
Lemma~\ref{lem:deviation} applies to $\phi_\lambda$ under strong convexity in $y$ given by Assumption \ref{ass:2}.

Lemma~\ref{lem:deviation} remains valid if strong convexity of $\psi(x,\cdot)$ and the $u$--Lipschitz property of
$\nabla_x\psi(x,u)$ hold only on a Lyapunov sublevel set $\{W\le W_0\}$.
In our proofs, once the time scales satisfy the absorption conditions, we obtain $\dot W(t)\le 0$ a.e., hence
$W(t)\le W(0)=:W_0$ for all $t\ge 0$.
Therefore the trajectory stays in $\{W\le W_0\}$ and the local assumptions are self-consistent.
\begin{lemma}\label{lem:timescale}
Let $W(t)=\Phi(x(t))+\alpha H_y(t)+\beta H_z(t)$ along a coupled flow, with $\alpha,\beta>0$.
Assume that for a.e.\ $t\ge 0$,
\begin{equation}\label{eq:timescale-premise}
\begin{aligned}
\dot W(t)\ \le\ -c_0\|\nabla \Phi(x(t))\|^2
\ +\ K_y\|D_y(t)\|^2\ +\ K_z\|D_z(t)\|^2 
-\ \alpha\,\tau_y\,\|r_y(t)\|^2\ -\ \beta\,\tau_z\,\|r_z(t)\|^2.
\end{aligned}
\end{equation}
for some constants $c_0>0$, $K_y,K_z\ge 0$ and residuals $r_y(t),r_z(t)$.
Suppose that Lemma~\ref{lem:deviation} holds:
\[
\|D_y(t)\|\le C_y\|r_y(t)\|,
\qquad
\|D_z(t)\|\le C_z\|r_z(t)\|.
\]
If $\tau_y\ge K_yC_y^2/\alpha$ and $\tau_z\ge K_zC_z^2/\beta$, then for a.e.\ $t\ge 0$,
$\dot W(t)\ \le\ -c_0\|\nabla \Phi(x(t))\|^2$.

\end{lemma}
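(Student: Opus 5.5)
The argument is a direct pointwise substitution into the premise \eqref{eq:timescale-premise}; no integration or dynamical reasoning is needed. Fix any $t$ outside the null set where \eqref{eq:timescale-premise} may fail. There the two deviation bounds supplied by Lemma~\ref{lem:deviation} hold. Since $K_y,K_z\ge 0$ and both sides of each bound are nonnegative, squaring preserves the inequalities and gives
\[
K_y\|D_y(t)\|^2\le K_yC_y^2\|r_y(t)\|^2,\qquad K_z\|D_z(t)\|^2\le K_zC_z^2\|r_z(t)\|^2 .
\]

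Substituting these into the premise and grouping the residual terms for each inner variable gives
\[
\dot W(t)\le -c_0\|\nabla\Phi(x(t))\|^2+\bigl(K_yC_y^2-\alpha\tau_y\bigr)\|r_y(t)\|^2+\bigl(K_zC_z^2-\beta\tau_z\bigr)\|r_z(t)\|^2 .
\]
Since $\alpha,\beta>0$, the threshold $\tau_y\ge K_yC_y^2/\alpha$ is equivalent to $\alpha\tau_y\ge K_yC_y^2$. Likewise $\tau_z\ge K_zC_z^2/\beta$ is equivalent to $\beta\tau_z\ge K_zC_z^2$. Both bracketed coefficients are therefore nonpositive. Because $\|r_y(t)\|^2,\|r_z(t)\|^2\ge 0$, the two residual terms can be dropped, which yields $\dot W(t)\le -c_0\|\nabla\Phi(x(t))\|^2$ for a.e.\ $t$.

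There is no genuine obstacle. The points that need care are the following.
\begin{enumerate}
\item The sign conditions: $K_\cdot\ge 0$ is what allows the deviation bounds to be inserted into the positive cross terms, and $\alpha,\beta>0$ is what allows the thresholds to be rearranged.
\item The a.e.\ qualifier: the deviation bounds must be available on the same full-measure set of times as the premise.
\item The degenerate case $H_z\equiv 0$ (minimax): there one takes $K_z=0$, and the $z$-condition holds trivially.
\end{enumerate}

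As noted after Lemma~\ref{lem:deviation}, the deviation bounds may hold only on the sublevel set $\{W\le W_0\}$. In that case one first runs the argument up to the first exit time from this set. There $\dot W\le 0$, so $W$ does not increase, and a continuity argument shows the trajectory never exits. The bound then holds for a.e.\ $t\ge 0$.
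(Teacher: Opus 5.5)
Your argument is correct and is exactly the paper's proof: insert the squared conversion bounds into the premise, group the residual coefficients $K_yC_y^2-\alpha\tau_y$ and $K_zC_z^2-\beta\tau_z$, note that they are nonpositive under the stated thresholds, and drop them. Your closing sublevel-set paragraph is not needed here, because the lemma takes the conversion bounds as hypotheses along the trajectory; if you do want that extension, note that the trajectory starts on the boundary of $\{W\le W_0\}$, so the exit-time argument only closes if the bounds hold on a slightly larger open set such as $\{W< W_0+\varepsilon\}$.
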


In the next section, we verify \eqref{eq:timescale-premise} for each of \ref{eq:P1}--\ref{eq:P31} by explicitly differentiating the corresponding Lyapunov function $W$
and bounding the cross terms via Young's inequality, which yields concrete constants $(c_0,K_y,K_z)$ and effective time-scales $(\tau_y,\tau_z)$.
Combining these problem-specific normal-form bounds with Lemma~\ref{lem:timescale} gives the stated ergodic $O(1/T)$ stationarity guarantees.

Strong convexity/concavity is a sufficient condition that delivers (i) deviation-to-residual conversion and (ii) smooth value envelopes.
Under mere convexity/concavity, the unified proof template breaks at the same structural point in \ref{eq:P1}--\ref{eq:P31}:
cross terms cannot be absorbed without controlling $x$-gradient deviations by inner residuals.
Nevertheless, the Lyapunov framework remains a unifying perspective: any mechanism that restores a conversion of the form (error bounds / sharpness conditions, regularization/smoothing, etc.) enables the same
time-scale separation principle and yields ergodic stationarity guarantees beyond the strongly convex/concave regime.

\section{Main Results}\label{sec:main-results}
This section instantiates the unified Lyapunov toolkit developed in Section~\ref{sec:unified}.
All results below follow the same proof template driven by Lemmas~\ref{lem:deviation}--\ref{lem:timescale}:
\begin{enumerate}[label=\textbf{S\arabic*.}, ref=\textbf{S\arabic*}]
\item\label{step:S1}
Choose $W=\Phi+\alpha H_y+\beta H_z$ (Table~\ref{tab:instantiation}) and differentiate it along the coupled flow,
using Young's inequality to obtain~\eqref{eq:timescale-premise}.
This requires a smooth envelope for $\Phi$, i.e., $\Phi$ is differentiable and
$\nabla \Phi(x)=\nabla_x \psi\bigl(x,u^*(x)\bigr)$ for the relevant inner optimizer $u^*(x)$.

\item\label{step:S2}
Use Lemma~\ref{lem:deviation}
to convert the resulting deviations $(D_y,D_z)$ into the residuals $(r_y,r_z)$
that are directly dissipated by the inner dynamics.

\item\label{step:S3}
Choose the time scales large enough so that Lemma~\ref{lem:timescale} absorbs the cross terms, yielding
$\dot W \le -c_0\|\nabla \Phi(x(t))\|^2$.
Integrating this inequality over $[0,T]$ then gives an ergodic $O(1/T)$ stationarity bound.
\end{enumerate}
We collect a dictionary of the
problem-wise instantiations of $(D_y,D_z,r_y,r_z,\tau_y,\tau_z,c_0,K_y,K_z)$ in Appendix~\ref{app:diction},
and defer algebraic derivations to the corresponding appendix proofs.

\subsection{Minimax}\label{subsec:main-minimax}
We consider \ref{eq:P1} with value function $F(x):=\max_{y\in Y} f(x,y)$ and the coupled flow~\eqref{eq:flow-minimax-unified}.
Under Assumption~\ref{ass:1}, $f(x,\cdot)$ is $\mu$-strongly concave, hence the maximizer $y^*$ is unique and
Danskin's theorem yields the smooth envelope identity $\nabla F(x)=\nabla_x f\bigl(x,y^*(x)\bigr)$.

We instantiate the unified Lyapunov template with
$W_\alpha(x,y)=F(x)+\alpha\bigl(F(x)-f(x,y)\bigr)$, where the gap term tracks the inner maximization error.

\begin{proposition}\label{prop:minimax-ergodic}
Suppose Assumption~\ref{ass:1} holds, and let $t\sim\mathrm{Unif}[0,T]$.
Set $\alpha=\tfrac12$ in $W_\alpha(x,y):=F(x)+\alpha\bigl(F(x)-f(x,y)\bigr)$, and denote $W_{1/2}$ accordingly.
Assume that $\gamma\ \ge\ \frac{5}{4}\Bigl(\frac{L_{xy}}{\mu}\Bigr)^{2}$.
Define $c_1:=\tfrac12$.
Then $\mathbb{E}_{t}\!\left[\bigl\|\nabla F\bigl(x(t)\bigr)\bigr\|^{2}\right]
\ \le\ \frac{W_{1/2}(x(0),y(0)) - F_{\inf}}{c_1\,T}$.
\end{proposition}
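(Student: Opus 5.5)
We follow the template \ref{step:S1}--\ref{step:S3} with $\Phi=F$, $H_y=F(x)-f(x,y)$ and $H_z=0$. Write $g_x:=\nabla_x f(x,y)$, $g_x^*:=\nabla_x f(x,y^*(x))=\nabla F(x)$ (Danskin), $D_y:=g_x-g_x^*$ and $r_y:=\nabla_y f(x,y)$. Along \eqref{eq:flow-minimax-unified} we have $\dot x=-g_x$ and $\dot y=\gamma r_y$. This gives
\[
\tfrac{d}{dt}F(x(t))=-\langle g_x^*,g_x\rangle,\qquad
\tfrac{d}{dt}f(x(t),y(t))=-\|g_x\|^2+\gamma\|r_y\|^2 .
\]
Since $W_\alpha=(1+\alpha)F-\alpha f$, we obtain
\[
\dot W_\alpha=-(1+\alpha)\langle g_x^*,g_x\rangle+\alpha\|g_x\|^2-\alpha\gamma\|r_y\|^2 .
\]

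Next we expand in $g_x^*$ and $D_y$. Substituting $g_x=g_x^*+D_y$ gives
\[
\dot W_\alpha=-\|g_x^*\|^2-(1-\alpha)\langle g_x^*,D_y\rangle+\alpha\|D_y\|^2-\alpha\gamma\|r_y\|^2 .
\]
With $\alpha=\tfrac12$, Young's inequality gives $-\tfrac12\langle g_x^*,D_y\rangle\le \tfrac12\|g_x^*\|^2+\tfrac18\|D_y\|^2$. Hence
\[
\dot W_{1/2}\le -\tfrac12\|\nabla F(x)\|^2+\tfrac58\|D_y\|^2-\tfrac12\gamma\|r_y\|^2 .
\]
This is exactly the premise \eqref{eq:timescale-premise} with $c_0=\tfrac12$, $K_y=\tfrac58$, $\tau_y=\gamma$ and $K_z=0$.

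Lemma~\ref{lem:deviation} applied to $\psi=-f$ (which is $\mu$-strongly convex, with $\nabla_x\psi$ being $L_{xy}$-Lipschitz in $y$) gives $\|D_y\|\le (L_{xy}/\mu)\|r_y\|$, so $C_y=L_{xy}/\mu$. Lemma~\ref{lem:timescale} then requires $\gamma\ge K_yC_y^2/\alpha=\tfrac{5}{4}(L_{xy}/\mu)^2$, which is precisely the assumed threshold. It yields $\dot W_{1/2}\le-\tfrac12\|\nabla F(x(t))\|^2$.

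To conclude, integrate this inequality over $[0,T]$. Since $W_{1/2}=F+\tfrac12(F-f)\ge F\ge F_{\inf}$, we get $\tfrac12\int_0^T\|\nabla F(x(t))\|^2dt\le W_{1/2}(x(0),y(0))-F_{\inf}$. Dividing by $T$ gives the claimed expectation bound for $t\sim\mathrm{Unif}[0,T]$.

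The main obstacle is a regularity issue. The derivative computation needs $F$ differentiable along the trajectory, with $\nabla F=\nabla_x f(x,y^*(x))$. Strong concavity and Danskin's theorem give this, and $W$ is absolutely continuous, so the inequality holds a.e. The remaining algebra is routine, but the constants must come out as exactly $\tfrac58$ and $\tfrac12$ to match the stated threshold.
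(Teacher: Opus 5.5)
Your proposal is correct and follows the paper's proof essentially line for line. The shared steps are: Danskin for the envelope, the expansion $\dot W_\alpha=-\|\nabla F\|^2-(1-\alpha)\langle\nabla F,D_y\rangle+\alpha\|D_y\|^2-\alpha\gamma\|r_y\|^2$, Young with constants giving $c_0=\tfrac12$ and $K_y=\tfrac58$, Lemma~\ref{lem:deviation} with $\psi=-f$, and integration using $W_{1/2}\ge F\ge F_{\inf}$. Differentiating $W_\alpha=(1+\alpha)F-\alpha f$ directly is a harmless shortcut, and it also sidesteps the sign slip in the paper's intermediate formula for $\dot H_y$, which should read $+\langle D_y,B\rangle$.
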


\noindent
Proposition~\ref{prop:minimax-ergodic} immediately implies that drawing $t\sim\mathrm{Unif}[0,T]$
returns an $\varepsilon$-stationary point of $F$ in expectation once $T=\Theta(\varepsilon^{-1})$. 
\begin{corollary}\label{cor:minimax-tracking}
Suppose Assumption~\ref{ass:1} holds and let $t\sim\mathrm{Unif}[0,T]$.
If $\gamma>\frac{5}{4}\bigl(\frac{L_{xy}}{\mu}\bigr)^2$, then
$\mathbb E_t[\|y(t)-y^*(x(t))\|^{2}]
\le
\frac{W_{1/2}(x(0),y(0))-F_{\inf}}{\mu^{2}c_y(\gamma)T}$,
where $c_y(\gamma):=\frac{\gamma}{2}-\frac{5}{8}\bigl(\frac{L_{xy}}{\mu}\bigr)^2$.
\end{corollary}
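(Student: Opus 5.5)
The plan is to rerun the Lyapunov computation behind Proposition~\ref{prop:minimax-ergodic}, this time keeping the residual dissipation term that Lemma~\ref{lem:timescale} uses up. We then convert that residual into the tracking error $\|y-y^*(x)\|$ through strong concavity.

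\emph{Step 1: differentiate $W_{1/2}$.} We have $W_{1/2}=\tfrac32F(x)-\tfrac12 f(x,y)$. Write $g:=\nabla_x f(x,y)$ and $r:=\nabla_y f(x,y)$. By Danskin's identity $\nabla F(x)=\nabla_x f(x,y^*(x))$, the deviation is $D:=g-\nabla F(x)$. Along \eqref{eq:flow-minimax-unified} we have $\dot x=-g$ and $\dot y=\gamma r$, so
\[
\dot W_{1/2}=-\tfrac32\langle\nabla F,g\rangle+\tfrac12\|g\|^2-\tfrac{\gamma}{2}\|r\|^2 .
\]
Substituting $g=\nabla F+D$ gives
\[
\dot W_{1/2}=-\|\nabla F\|^2-\tfrac12\langle \nabla F,D\rangle+\tfrac12\|D\|^2-\tfrac\gamma2\|r\|^2 .
\]
Young's inequality gives $-\tfrac12\langle\nabla F,D\rangle\le\tfrac12\|\nabla F\|^2+\tfrac18\|D\|^2$. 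Hence
\[
\dot W_{1/2}\le-\tfrac12\|\nabla F\|^2+\tfrac58\|D\|^2-\tfrac\gamma2\|r\|^2 .
\]
This is exactly the premise \eqref{eq:timescale-premise} with $c_0=\tfrac12$, $K_y=\tfrac58$, $\alpha\tau_y=\tfrac\gamma2$ and $K_z=0$.

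\emph{Step 2: bound the deviation and keep the slack.} Lemma~\ref{lem:deviation} applied to $\psi=-f$ gives $\|D\|\le (L_{xy}/\mu)\|r\|$. Instead of only checking the threshold, we keep the leftover term:
\[
\dot W_{1/2}\le -\tfrac12\|\nabla F\|^2-c_y(\gamma)\|r\|^2 ,\qquad c_y(\gamma)=\tfrac\gamma2-\tfrac58\Bigl(\tfrac{L_{xy}}{\mu}\Bigr)^2 .
\]
The strict inequality assumed on $\gamma$ is precisely what makes $c_y(\gamma)>0$.

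\emph{Step 3: residual to tracking error.} Since $f(x,\cdot)$ is $\mu$-strongly concave and $\nabla_y f(x,y^*(x))=0$, strong monotonicity of $-\nabla_y f$ together with Cauchy--Schwarz gives $\|r\|\ge\mu\|y-y^*(x)\|$. We drop the nonpositive $\|\nabla F\|^2$ term, which leaves
\[
\dot W_{1/2}\le-\mu^2c_y(\gamma)\|y-y^*(x)\|^2 .
\]
We integrate over $[0,T]$ and use $W_{1/2}(T)\ge F(x(T))\ge F_{\inf}$, which holds because the gap term $F-f$ is nonnegative. Dividing by $T$ then yields the stated bound on $\mathbb E_t[\|y(t)-y^*(x(t))\|^2]$.

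The only delicate point is the bookkeeping in Step 1. The Young split must leave exactly the constant $\tfrac58$, so that the threshold matches $\tfrac54(L_{xy}/\mu)^2$ and the residual coefficient is exactly $c_y(\gamma)$. Everything else is routine once that inequality is in hand.
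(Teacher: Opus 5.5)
Your proposal is correct and follows the paper's argument. It uses the same expansion of $\dot W_{1/2}$ with the Young split giving $c_0=\tfrac12$ and $K_y=\tfrac58$, the deviation bound $\|D\|\le (L_{xy}/\mu)\|r\|$ kept as the slack term $-c_y(\gamma)\|r\|^2$, and the strong-concavity conversion $\|y-y^*(x)\|\le \mu^{-1}\|\nabla_y f(x,y)\|$. The only cosmetic difference is the order: you convert residual to tracking error pointwise and then integrate, whereas the paper first integrates to obtain the ergodic residual bound (Lemma~\ref{lem:minimax-inner-residual}) and then converts. The two orderings are equivalent.
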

When $\gamma$ is strictly above the threshold, the Lyapunov dissipation retains a quantitative control of the inner residual
$\|\nabla_y f(x(t),y(t))\|^2$.
Strong concavity of $f(x,\cdot)$ converts this residual into a bound on the tracking error $\|y(t)-y^*(x(t))\|$,
yielding an $O(1/T)$ ergodic tracking guarantee stated in Corollary~\ref{cor:minimax-tracking}.

 
\subsection{Bilevel}\label{subsec:main-bilevel}
We now instantiate the unified Lyapunov toolkit for the bilevel problem \ref{eq:problem2}.
Fix $\lambda>\rho/\mu$ and recall the formulation \eqref{eq:Elambda-unified} and the induced penalized value function
$\Phi(x):=\mathcal L_\lambda^*(x)$ in \eqref{eq:bilevel-value-f}.
As in Table~\ref{tab:instantiation}, we use the Lyapunov construction
\[
\begin{aligned}
W(x,y,z)\ :=\ &\mathcal L_\lambda^*(x)\ +\ \alpha\bigl(\mathcal L_\lambda(x,y)-\mathcal L_\lambda^*(x)\bigr)+\ \beta\lambda\bigl(g(x,z)-g^*(x)\bigr),
\end{aligned}
\]
so that the gap terms quantify tracking errors for the $y$-update (toward $y_\lambda^*(x)$)
and the $z$-tracker (toward $g^*(x)$). 

Differentiating $W$ along the coupled flow~\eqref{eq:flow-bilevel-unified} yields cross terms involving the
$x$--gradient mismatches $(D_y,D_z)$.
To carry out the Lyapunov absorption step (Lemma~\ref{lem:timescale}), we bound these mismatches by the inner residuals
$(r_y,r_z)$ dissipated by the auxiliary dynamics; see Table~\ref{tab:normal-form-dictionary_long} for the bilevel identification of $(D_y,D_z,r_y,r_z)$.
In particular, the analysis requires a deviation-to-residual conversion of the form
\begin{equation}\label{eq:DyDz-conv-main}
\|D_y\|\ \le\ C_y(\lambda)\,\|r_y\|,
\qquad
\|D_z\|\ \le\ C_z\,\|r_z\|.
\end{equation}
This conversion is the key condition needed to absorb the cross terms via Lemma~\ref{lem:timescale}. A sufficient condition for \eqref{eq:DyDz-conv-main} is the strong convexity of
$\phi_\lambda(x,y):=f(x,y)+\lambda g(x,y)
\quad\text{in $y$},$ together with the cross-Lipschitz bounds in Assumptions~\ref{ass:2}.2--\ref{ass:2}.3.
Indeed, since $\mathcal L_\lambda(x,y)=\phi_\lambda(x,y)-\lambda g^*(x)$, Lemma~\ref{lem:deviation} applies to $\phi_\lambda$ and yields \eqref{eq:DyDz-conv-main} with $C_y(\lambda)=\frac{L_{fx}+\lambda L_{gx}}{\mu_\lambda},\ C_z=\frac{L_{gx}}{\mu}$.
Here $\mu_\lambda$ denotes the strong-convexity modulus of
$\phi_\lambda(x,\cdot)=f(x,\cdot)+\lambda g(x,\cdot)$. Moreover, for each fixed $x$, $\phi_\lambda(x,\cdot)$ is strongly convex if $g(x,\cdot)$ is $\mu$--strongly convex and
the map $y\mapsto f(x,y)$ is $\rho$--weakly convex, provided that $\lambda>\rho/\mu$.
The proof is deferred to Proposition~\ref{prop:bilevel-deviation}.
The Lyapunov analysis only needs \eqref{eq:DyDz-conv-main}; $\rho$-weak convexity on $f(x,\cdot)$~(Assumption~\ref{ass:2}) is merely a convenient sufficient condition to ensure it.

With \eqref{eq:DyDz-conv-main} in hand, the Lyapunov derivative matches the normal form \eqref{eq:timescale-premise}
with effective time scales $(\tau_y,\tau_z)=(\delta,\eta\lambda^2)$ and Young-type constants $(c_0,K_y,K_z)$
(see Appendix~\ref{app:diction} for the dictionary).
The Young parameter $\varepsilon_1$ can be chosen arbitrarily in $(0,2)$; for readability we set $\varepsilon_1=1$
in the main text (the general expressions are given in the proof). Fix any $\alpha,\beta\in(0,1)$ and any $\theta>0$, and define $c_1:=(1-\alpha)/4,$
\begin{align}
\delta_0(\lambda)
\ &:=\ \frac{C_y(\lambda)^2}{\alpha}
\left(\frac{1-\alpha}{2}+\frac{\theta|\alpha-\beta|}{2}\right), \
\eta_0
\ :=\ \frac{C_z^2}{\beta}
\left(\beta+\frac{(1-\beta)^2}{1-\alpha}+\frac{|\alpha-\beta|}{2\theta}\right).
\label{eq:eta0-bilevel}
\end{align}
Here $\theta>0$ is only a Young-balancing parameter.  

As a useful symmetric specialization, if $\alpha=\beta=\tfrac12$, then $\delta_0(\lambda)=\tfrac12 C_y(\lambda)^2, \eta_0=2C_z^2$.
The following theorem applies Lemma~\ref{lem:timescale} with these thresholds to obtain the fixed-$\lambda$ $O(1/T)$ stationarity bound for $\Phi(x)=\mathcal L_\lambda^*(x)$.


\begin{theorem}\label{thm:fixed-lambda-simple}
Assume Assumption~\ref{ass:2} holds. Fix $\lambda>\rho/\mu$, $\alpha,\beta\in(0,1)$, and $\theta>0$.
Consider any solution $(x(t),y(t),z(t))$ of \eqref{eq:flow-bilevel-unified}.
If the time-scale parameters satisfy $\delta \ \ge\ \delta_0(\lambda)$ and $\eta \ \ge\ \eta_0$, where $\delta_0(\lambda)$ and $\eta_0$ are defined in \eqref{eq:eta0-bilevel},
then for $t\sim\mathrm{Unif}[0,T]$,
$
\mathbb E_t\Big[\bigl\|\nabla_x \mathcal L_\lambda^{*}(x(t))\bigr\|^{2}\Big]
\ \le\ \frac{W(x(0),y(0),z(0))-\mathcal L_{\lambda,\inf}^{*}}{c_1\,T},
$
where $c_1= (1-\alpha)/4$ .
\end{theorem}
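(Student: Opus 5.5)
The proof follows the template \ref{step:S1}--\ref{step:S3}. I would compute $\dot W$ exactly along \eqref{eq:flow-bilevel-unified}, write it in the normal form \eqref{eq:timescale-premise} with $(\tau_y,\tau_z)=(\delta,\eta\lambda^2)$, and then invoke Lemma~\ref{lem:deviation} and Lemma~\ref{lem:timescale}.

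\emph{Preliminaries.} Let $\phi_\lambda:=f+\lambda g$.
\begin{itemize}
\item For $\lambda>\rho/\mu$, $\phi_\lambda(x,\cdot)$ is $\mu_\lambda$-strongly convex with $\mu_\lambda\ge\lambda\mu-\rho>0$. This is the content of Proposition~\ref{prop:bilevel-deviation}, which I take as given.
\item Hence $y^*_\lambda(x)$ and $z^*(x):=\arg\min_z g(x,z)$ are unique.
\item Danskin's theorem gives $\nabla g^*(x)=\nabla_x g(x,z^*(x))$ and $\nabla\mathcal L_\lambda^*(x)=\nabla_x\phi_\lambda(x,y^*_\lambda(x))-\lambda\nabla g^*(x)$.
\end{itemize}
I would then introduce the deviations and residuals
\[
D_y:=\nabla_x\phi_\lambda(x,y)-\nabla_x\phi_\lambda(x,y^*_\lambda),\quad D_z:=\nabla_x g(x,z)-\nabla_x g(x,z^*),\quad r_y:=\nabla_y\phi_\lambda(x,y),\quad r_z:=\nabla_z g(x,z).
\]
Writing $\Phi=\mathcal L_\lambda^*$, the flow becomes
\[
\dot x=-(\nabla\Phi+D_y-\lambda D_z),\qquad \dot y=-\delta r_y,\qquad \dot z=-\eta\lambda r_z.
\]

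\emph{Computing $\dot W$.} Rewrite
\[
W=(1-\alpha)\Phi+\alpha\phi_\lambda(x,y)-(\alpha+\beta)\lambda g^*(x)+\beta\lambda g(x,z).
\]
Substituting the identities above, the $x$-gradient collapses to
\[
\nabla_xW=\nabla\Phi+\alpha D_y+\beta\lambda D_z,
\]
while $\nabla_yW=\alpha r_y$ and $\nabla_zW=\beta\lambda r_z$. Therefore
\[
\dot W=-\langle\nabla\Phi+\alpha D_y+\beta\lambda D_z,\ \nabla\Phi+D_y-\lambda D_z\rangle-\alpha\delta\|r_y\|^2-\beta\eta\lambda^2\|r_z\|^2 .
\]
Expanding the inner product leaves $-\|\nabla\Phi\|^2$, cross terms $\langle\nabla\Phi,D_y\rangle$ and $\lambda\langle\nabla\Phi,D_z\rangle$, a mixed term $(\alpha-\beta)\lambda\langle D_y,D_z\rangle$, and the quadratics $-\alpha\|D_y\|^2$ and $+\beta\lambda^2\|D_z\|^2$.

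\emph{Young step.} I would bound each cross term by Young's inequality.
\begin{itemize}
\item The two $\nabla\Phi$ cross terms use weights chosen so that a fraction $(1-\alpha)/4=c_1$ of $\|\nabla\Phi\|^2$ survives. The Young parameter $\varepsilon_1$ is set to $1$; a regrouping of the coefficients around the factor $1-\alpha$ should produce the $(1-\alpha)/2$ and $(1-\beta)^2/(1-\alpha)$ pieces in \eqref{eq:eta0-bilevel}.
\item The mixed term is bounded by $\tfrac{\theta|\alpha-\beta|}{2}\|D_y\|^2+\tfrac{|\alpha-\beta|}{2\theta}\lambda^2\|D_z\|^2$.
\end{itemize}
This yields \eqref{eq:timescale-premise} with $c_0=c_1$, with $K_y$ equal to the bracket in $\delta_0(\lambda)$, and with $K_z=\lambda^2\times$ the bracket in $\eta_0$. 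The factor $\lambda^2$ in $K_z$ cancels exactly against $\tau_z=\eta\lambda^2$, so $\eta_0$ is independent of $\lambda$.

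\emph{Conversion and absorption.} Lemma~\ref{lem:deviation}, applied to $\phi_\lambda$ and to $g$, gives $\|D_y\|\le C_y(\lambda)\|r_y\|$ and $\|D_z\|\le C_z\|r_z\|$. The constant $\lambda g^*(x)$ drops out because it does not depend on $y$. Lemma~\ref{lem:timescale} then gives $\dot W\le -c_1\|\nabla\Phi(x(t))\|^2$ exactly when $\delta\ge\delta_0(\lambda)$ and $\eta\ge\eta_0$.

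\emph{Integration.} Integrate over $[0,T]$ and use $W\ge\Phi\ge\mathcal L^*_{\lambda,\inf}$, which holds since $H_y,H_z\ge0$. Dividing by $c_1T$ gives the stated bound for $t\sim\mathrm{Unif}[0,T]$.

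The main obstacle is the Young bookkeeping for the $\nabla\Phi$ cross terms. The collapse of $\nabla_xW$ is clean, but reproducing the exact thresholds \eqref{eq:eta0-bilevel} requires the right split of $\|\nabla\Phi\|^2$ between the $D_y$ and $D_z$ terms. I would first try splitting the $-\|\nabla\Phi\|^2$ budget so that the $D_y$ and $D_z$ cross terms each consume $(1-\alpha)/4$, and then tune the weights until the coefficients match.
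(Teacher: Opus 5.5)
Your architecture is the paper's. The identity $\nabla_xW=\nabla\Phi+\alpha D_y+\beta\lambda D_z$ is a tidier route to the same raw expression for $\dot W$ that the paper assembles term by term. Your mixed-term bound, the conversion via Lemma~\ref{lem:deviation}, the absorption and the integration all coincide with the paper's proof.

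The gap is the Young step. It is the only place where the constants $c_1=(1-\alpha)/4$, $\delta_0(\lambda)$ and $\eta_0$ of the statement are produced, and you leave it open with a heuristic that fails. Suppose the $D_y$ and $D_z$ cross terms each consume $(1-\alpha)/4$ of the unit budget $-\|\nabla\Phi\|^2$. Then the coefficient of $\|D_y\|^2$ is $\tfrac{(1+\alpha)^2}{1-\alpha}-\alpha$, even after spending $-\alpha\|D_y\|^2$. This exceeds $\tfrac{1-\alpha}{2}$, so the stated $\delta_0(\lambda)$ would not suffice.

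Tuning cannot rescue this unless $-\alpha\|D_y\|^2$ is used. With the mixed term bounded separately as you propose, you would need
\[
-\|\nabla\Phi\|^2-(1+\alpha)\langle\nabla\Phi,D_y\rangle+\lambda(1-\beta)\langle\nabla\Phi,D_z\rangle
\le-\tfrac{1-\alpha}{4}\|\nabla\Phi\|^2+\tfrac{1-\alpha}{2}\|D_y\|^2+\tfrac{\lambda^2(1-\beta)^2}{1-\alpha}\|D_z\|^2
\]
for all vectors. By a Schur complement this is equivalent to $\tfrac{(1+\alpha)^2}{2(1-\alpha)}+\tfrac{1-\alpha}{4}\le\tfrac{3+\alpha}{4}$, which is false for every $\alpha\in(0,1)$.

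The missing idea is the grouping the paper uses:
\[
-\|\nabla\Phi\|^2-(1+\alpha)\langle\nabla\Phi,D_y\rangle-\alpha\|D_y\|^2
=-(1-\alpha)\|\nabla\Phi\|^2-(1-\alpha)\langle\nabla\Phi,D_y\rangle-\alpha\|\nabla\Phi+D_y\|^2 .
\]
Drop the last term and apply Young with weight $1$ to get $-\tfrac{1-\alpha}{2}\|\nabla\Phi\|^2+\tfrac{1-\alpha}{2}\|D_y\|^2$. Equivalently, the $D_y$ cross term must consume $(1+\alpha)/2$ of the unit budget, not $(1-\alpha)/4$.

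Then $\lambda(1-\beta)\langle\nabla\Phi,D_z\rangle\le\tfrac{1-\alpha}{4}\|\nabla\Phi\|^2+\tfrac{\lambda^2(1-\beta)^2}{1-\alpha}\|D_z\|^2$ leaves exactly $c_1=(1-\alpha)/4$. Adding your mixed-term bound and the $+\beta\lambda^2\|D_z\|^2$ term gives precisely the brackets in \eqref{eq:eta0-bilevel}.

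Once $-\alpha\|D_y\|^2$ is included, the Schur condition above holds with equality. So these constants are tight for this structure, and the split is forced rather than something to tune.
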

Theorem~\ref{thm:fixed-lambda-simple} implies that if $t\sim\mathrm{Unif}[0,T]$, then
$\mathbb E_t\!\left[\|\nabla_x \mathcal L_\lambda^*(x(t))\|^2\right]\le \varepsilon$
once $T=\Theta(\varepsilon^{-1})$. 
Under strict separation $\delta>\delta_0(\lambda)$ and $\eta>\eta_0$, the same Lyapunov inequality also controls the residual terms, which implies quantitative tracking of $y(t)$ and $z(t)$.

\begin{corollary}\label{cor:bilevel-tracking}
Assume the conditions of Theorem~\ref{thm:fixed-lambda-simple}.
Define the positive margins $c_y(\delta,\lambda):=\alpha\bigl(\delta-\delta_0(\lambda)\bigr),\quad
c_z(\eta,\lambda):=\beta\lambda^2\bigl(\eta-\eta_0\bigr)$.
If, for every $x$, the function
\(
y\mapsto \phi_\lambda(x,y):=f(x,y)+\lambda g(x,y)
\)
is $\mu_\lambda$-strongly convex, then for $t\sim\mathrm{Unif}[0,T]$, $\mathbb E_t\bigl[\|y(t)-y_\lambda^*(x(t))\|^2\bigr]
\le
\frac{W(x(0),y(0),z(0))-\mathcal L_{\lambda,\inf}^{*}}
{\mu_\lambda^2\,c_y(\delta,\lambda)\,T}$.
Moreover, since $g(x,\cdot)$ is $\mu$-strongly convex, $\mathbb E_t\bigl[\|z(t)-y^*(x(t))\|^2\bigr]
\le
\frac{W(x(0),y(0),z(0))-\mathcal L_{\lambda,\inf}^{*}}
{\mu^2\,c_z(\eta,\lambda)\,T}$,
where $y^*(x)\in\arg\min_y g(x,y)$.
\end{corollary}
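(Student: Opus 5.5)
The plan is to reuse the Lyapunov inequality from the proof of Theorem~\ref{thm:fixed-lambda-simple}, this time keeping the strict-separation surplus instead of discarding it. In that proof, differentiating $W$ along \eqref{eq:flow-bilevel-unified} produces the normal form \eqref{eq:timescale-premise} with $\tau_y=\delta$, $\tau_z=\eta\lambda^2$ and residuals $r_y=\nabla_y\phi_\lambda(x,y)$, $r_z=\nabla_z g(x,z)$. The deviation bounds \eqref{eq:DyDz-conv-main} then give
\[
\dot W\le -c_1\|\nabla_x\mathcal L_\lambda^*(x)\|^2-\alpha(\delta-\delta_0(\lambda))\|r_y\|^2-\beta\lambda^2(\eta-\eta_0)\|r_z\|^2 .
\]
Here the thresholds $\delta_0(\lambda)$ and $\eta_0$ in \eqref{eq:eta0-bilevel} are exactly the amounts of residual dissipation consumed by $K_yC_y^2$ and $K_zC_z^2$ after the Young splitting. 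I would check this bookkeeping first: Lemma~\ref{lem:timescale} is applied with $\tau_y$ replaced by $\delta_0(\lambda)$ and $\tau_z$ by $\eta_0$, so the leftover coefficients are precisely $c_y(\delta,\lambda)$ and $c_z(\eta,\lambda)$. This step is the main obstacle. One must confirm that the $\lambda^2$ factor in front of $\eta$ matches the definition of $c_z$, and that $\eta_0$ was normalized per unit of $\beta\lambda^2$ rather than per unit of $\beta$.

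Next I drop the nonpositive gradient term and integrate over $[0,T]$. Since $H_y,H_z\ge 0$, we have $W(T)\ge \mathcal L_\lambda^*(x(T))\ge \mathcal L^*_{\lambda,\inf}$. This gives
\[
c_y\int_0^T\|r_y\|^2\,dt\le W(0)-\mathcal L^*_{\lambda,\inf},
\]
and the analogous bound for $c_z\int_0^T\|r_z\|^2\,dt$. Dividing by $T$ turns these into expectations over $t\sim\mathrm{Unif}[0,T]$.

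Finally I convert residuals to distances using strong convexity. Since $\phi_\lambda(x,\cdot)$ is $\mu_\lambda$-strongly convex with $\nabla_y\phi_\lambda(x,y_\lambda^*(x))=0$, we get $\|\nabla_y\phi_\lambda(x,y)\|\ge\mu_\lambda\|y-y^*_\lambda(x)\|$. The same holds for $g(x,\cdot)$ with modulus $\mu$ and minimizer $y^*(x)$. Hence $\|y-y^*_\lambda\|^2\le\|r_y\|^2/\mu_\lambda^2$ and $\|z-y^*\|^2\le\|r_z\|^2/\mu^2$. Inserting these into the integrated bounds yields both claimed estimates. The strict inequalities $\delta>\delta_0$ and $\eta>\eta_0$ are needed only so that $c_y,c_z>0$ and the division is legitimate.
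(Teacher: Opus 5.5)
Your proposal is correct and follows the paper's argument. The paper first proves Lemma~\ref{lem:bilevel-residuals}: it keeps the margins $c_2=\alpha\delta-K_yC_y(\lambda)^2=\alpha(\delta-\delta_0(\lambda))$ and $c_3=\beta\eta\lambda^2-K_zC_z^2=\beta\lambda^2(\eta-\eta_0)$ in \eqref{eq:Wdot-final}, integrates, and uses $W\ge\mathcal L_\lambda^*\ge\mathcal L_{\lambda,\inf}^*$; it then converts $\|r_y\|,\|r_z\|$ into $\|y-y_\lambda^*\|,\|z-y^*\|$ by strong monotonicity, exactly as you do, and your bookkeeping concern resolves as you expected because $\eta_0$ is defined as $K_zC_z^2/(\beta\lambda^2)$.
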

Theorem~\ref{thm:fixed-lambda-simple} guarantees stationarity of the penalized value function $\mathcal L_\lambda^*$.
To obtain a guarantee for the true bilevel hyper-objective $F(x)=f(x,y^*(x))$, we must control the surrogate bias
$\|\nabla_x\mathcal L_\lambda^*(x)-\nabla F(x)\|$. To quantify the discrepancy between $\nabla_x\mathcal L_\lambda^{*}(x)$ and $\nabla F(x)$, we impose the following
regularity conditions.
\begin{assumption}\label{ass:3}
Suppose that $f$ and $g$ are twice continuously differentiable and that:
\begin{enumerate}[leftmargin=*]
\item $\displaystyle \sup_{x,y}\ \|\nabla_{xy}^{2}g(x,y)\|
\le M_{gxy}.$
\item $\displaystyle \|\nabla_y f(x,y_1)-\nabla_y f(x,y_2)\|
\le L_{fy}\|y_1-y_2\|.$
\item $\displaystyle \|\nabla_{xy}^{2}g(x,y_1)-\nabla_{xy}^{2}g(x,y_2)\|
\le L_{gxy}\|y_1-y_2\|$, 
$\displaystyle \|\nabla_{yy}^{2}g(x,y_1)-\nabla_{yy}^{2}g(x,y_2)\|
\le L_{gyy}\|y_1-y_2\|.$
\item $\|\nabla F(x)-\nabla F(x')\|\le L_F\|x-x'\|.$
\end{enumerate}
\end{assumption}

\begin{lemma}\label{lem:bridge}
Assume the conditions of Theorem~\ref{thm:fixed-lambda-simple}, and
Assumption~\ref{ass:3}. Let
$\bar\lambda\ :=\ \frac{2L_{fy}}{\mu}$. Then for any $\lambda\ge \bar\lambda$ and every $x\in\mathbb R^{d_x}$, $\bigl\|\nabla_x \mathcal L_\lambda^{*}(x)\;-\;\nabla F(x)\bigr\|
\ \le\ \frac{C_{\mathrm{br}}(x)}{\lambda}$,
where 
\begin{equation}\label{eq:Cbr-x}
\begin{aligned}
C_{\mathrm{br}}(x)\!
:=
\frac{2}{\mu}\Bigl(L_{fx}+\frac{M_{gxy}L_{fy}}{\mu}\Bigr)\,\|\nabla_y f(x,y^{*}(x))\| 
+
\frac{2}{\mu^2}\Bigl(L_{gxy}+\frac{M_{gxy}L_{gyy}}{\mu}\Bigr)\,\|\nabla_y f(x,y^{*}(x))\|^{2}.
\end{aligned}
\end{equation}
\end{lemma}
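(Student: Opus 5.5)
We will compare the two gradients through explicit envelope formulas. By Danskin's theorem applied to $\mathcal L_\lambda^*(x)=\min_y \phi_\lambda(x,y)-\lambda g^*(x)$, with $y_\lambda^*(x)$ the unique minimizer of $\phi_\lambda(x,\cdot)$ (strongly convex since $\lambda\ge\bar\lambda>\rho/\mu$ is implied, or at least $\mu_\lambda\ge \lambda\mu/2$ once $\lambda\ge 2L_{fy}/\mu$), we get
\[
\nabla_x\mathcal L_\lambda^*(x)=\nabla_x f(x,y_\lambda^*)+\lambda\bigl(\nabla_x g(x,y_\lambda^*)-\nabla_x g(x,y^*)\bigr),
\]
using $\nabla g^*(x)=\nabla_x g(x,y^*(x))$. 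The hyper-gradient is \eqref{eq:hypergrad}. The plan is to Taylor-expand the bracket around $y^*$ and match it with the implicit term.

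\textbf{Step 1 (distance bound).} From optimality, $\nabla_y f(x,y_\lambda^*)+\lambda\nabla_y g(x,y_\lambda^*)=0$ and $\nabla_y g(x,y^*)=0$. Strong convexity of $g$ gives $\lambda\mu\|y_\lambda^*-y^*\|\le \|\nabla_y f(x,y_\lambda^*)\|\le \|\nabla_y f(x,y^*)\|+L_{fy}\|y_\lambda^*-y^*\|$, so for $\lambda\ge 2L_{fy}/\mu$ we obtain $\|y_\lambda^*-y^*\|\le \frac{2}{\lambda\mu}\|\nabla_y f(x,y^*)\|$. This is where $\bar\lambda$ enters.

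\textbf{Step 2 (linearization).} Write $\Delta:=y_\lambda^*-y^*$, $H:=\nabla^2_{yy}g(x,y^*)$, $J:=\nabla^2_{xy}g(x,y^*)$. By the integral form of Taylor's theorem and the Lipschitz Hessian bounds, $\nabla_x g(x,y_\lambda^*)-\nabla_x g(x,y^*)=J\Delta+R_1$ with $\|R_1\|\le \frac{L_{gxy}}{2}\|\Delta\|^2$, and $\nabla_y g(x,y_\lambda^*)=H\Delta+R_2$ with $\|R_2\|\le\frac{L_{gyy}}{2}\|\Delta\|^2$. The optimality condition then gives $\lambda H\Delta=-\nabla_y f(x,y_\lambda^*)-\lambda R_2$, so
\[
\lambda J\Delta=-JH^{-1}\nabla_y f(x,y^*)-JH^{-1}\bigl(\nabla_y f(x,y_\lambda^*)-\nabla_y f(x,y^*)\bigr)-\lambda JH^{-1}R_2 .
\]
Subtracting \eqref{eq:hypergrad}, the leading term cancels and
\[
\nabla_x\mathcal L_\lambda^*-\nabla F=\bigl(\nabla_x f(x,y_\lambda^*)-\nabla_x f(x,y^*)\bigr)-JH^{-1}\bigl(\nabla_y f(x,y_\lambda^*)-\nabla_y f(x,y^*)\bigr)+\lambda R_1-\lambda JH^{-1}R_2 .
\]

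\textbf{Step 3 (bounding).} Using $\|H^{-1}\|\le 1/\mu$, $\|J\|\le M_{gxy}$, the Lipschitz constants $L_{fx},L_{fy}$, and Step 1, the first two terms give $(L_{fx}+M_{gxy}L_{fy}/\mu)\|\Delta\|\le \frac{2}{\lambda\mu}(L_{fx}+\frac{M_{gxy}L_{fy}}{\mu})\|\nabla_y f(x,y^*)\|$, and the last two give $\lambda\frac12(L_{gxy}+M_{gxy}L_{gyy}/\mu)\|\Delta\|^2\le \frac{2}{\lambda\mu^2}(L_{gxy}+\frac{M_{gxy}L_{gyy}}{\mu})\|\nabla_y f(x,y^*)\|^2$, which together give exactly $C_{\mathrm{br}}(x)/\lambda$.

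The main obstacle is Step 2: arranging the algebra so the $O(1)$ parts cancel exactly against the implicit-function term, which requires substituting the $y_\lambda^*$ optimality condition through $H^{-1}$ rather than bounding $\lambda J\Delta$ directly (a direct bound would only give $O(1)$, not $O(1/\lambda)$). The constants then match $C_{\mathrm{br}}(x)$ without further slack.
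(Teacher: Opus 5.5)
Your proof is correct and yields exactly $C_{\mathrm{br}}(x)/\lambda$. It shares the paper's skeleton: the envelope formula for $\nabla_x\mathcal L_\lambda^*$, the bound $\|\Delta\|\le\frac{2}{\mu\lambda}\|\nabla_y f(x,y^*)\|$ from $\lambda\ge\bar\lambda$, and cancellation of the $O(1)$ part against the implicit term. Where you differ is in how the nonlinearity is handled.

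\emph{The paper's route.} It uses the averaged Hessian $\bar B=\int_0^1\nabla^2_{yy}g(x,y^*+s\Delta)\,ds$ to obtain the exact identity $\lambda\Delta=-\bar B^{-1}\nabla_y f(x,y_\lambda^*)$. It then writes the penalty term as $-\int_0^1\nabla^2_{xy}g(x,y^*+s\Delta)\,\bar B^{-1}\nabla_y f(x,y_\lambda^*)\,ds$ and compares this with $JH^{-1}\nabla_y f(x,y^*)$ through a three-way split:
\begin{itemize}
\item your $L_{fy}$ term;
\item an inverse-perturbation term, controlled by $\|H^{-1}-\bar B^{-1}\|\le\mu^{-2}\|H-\bar B\|\le\frac{L_{gyy}}{2\mu^2}\|\Delta\|$;
\item a mixed-Hessian variation term.
\end{itemize}
The last two need the extra estimate $\|\nabla_y f(x,y_\lambda^*)\|\le 2\|\nabla_y f(x,y^*)\|$ and are each bounded by a multiple of $\|\nabla_y f(x,y^*)\|\,\|\Delta\|$.

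\emph{Your route.} You linearize at the fixed point $y^*$, invert only $H$, and collect everything into $\lambda R_1-\lambda JH^{-1}R_2$. As a vector, this equals the sum of the paper's last two terms, but you bound it as $\lambda\cdot O(\|\Delta\|^2)$. This avoids both the matrix-inverse perturbation and the auxiliary gradient estimate, and gives the same constant, since $\frac{\lambda}{2}\cdot\frac{4}{\lambda^2\mu^2}=\frac{2}{\lambda\mu^2}$.

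The paper's version buys an exact representation of the penalty term as an averaged hyper-gradient. Yours is slightly shorter and more elementary.

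One small point: $\lambda\ge\bar\lambda$ does not by itself imply $\lambda>\rho/\mu$ for the stated $\rho$. You do not need it, though: $\lambda>\rho/\mu$ is already among the hypotheses, and your fallback is valid ($f(x,\cdot)$ is $L_{fy}$-weakly convex, so the modulus of $\phi_\lambda(x,\cdot)$ is at least $\lambda\mu-L_{fy}\ge\lambda\mu/2$).
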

Lemma~\ref{lem:bridge} shows that the bias decays as $O(1/\lambda)$; choosing $\lambda$ balances this bias with the $O(1/T)$ optimization error from Theorem~\ref{thm:fixed-lambda-simple}.
We combine the fixed-$\lambda$ ergodic bound from Theorem~\ref{thm:fixed-lambda-simple}
with the bridge Lemma~\ref{lem:bridge} to obtain an ergodic stationarity guarantee for the true bilevel objective
$F(x)$. 
\begin{assumption}\label{ass:4}
There exists a constant $G_{*}>0$ such that, along the region of interest,
\begin{equation}\label{eq:Gstar-assump}
\|\nabla_y f(x,y^{*}(x))\|\ \le\ G_{*}.
\end{equation}
\end{assumption}
\begin{corollary}\label{cor:bilevel-eps}
Assume the conditions of Theorem~\ref{thm:fixed-lambda-simple}, Lemma~\ref{lem:bridge}, and Assumption~\ref{ass:4}. Let $C_{\mathrm{br}}>0$ be any constant such that $C_{\mathrm{br}}(x)\ \le\ C_{\mathrm{br}}
\;(\text{for all $x$ in the region of interest})$,
where $C_{\mathrm{br}}(x)$ is defined in \eqref{eq:Cbr-x}.
Fix any $\varepsilon>0$ and choose $\lambda\ :=\ \max\Bigl\{\bar\lambda,\ \frac{2C_{\mathrm{br}}}{\sqrt\varepsilon}\Bigr\}$, where $\bar\lambda$ is defined in Lemma~\ref{lem:bridge}.
Choose $\delta \ \ge\ \delta_0(\lambda), \  \eta \ \ge\ \eta_0$.
Let $t\sim\mathrm{Unif}[0,T]$. If $T\ \ge\ \frac{4\bigl(W(x(0),y(0),z(0))-\mathcal L_{\lambda,\inf}^{*}\bigr)}
{c_1\,\varepsilon},$
then $\mathbb E_t\Big[\|\nabla F(x(t))\|^{2}\Big]\ \le\ \varepsilon$.
\end{corollary}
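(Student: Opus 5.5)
The plan is to split $\nabla F$ into the penalized gradient plus a bias term, and bound each piece separately. Pointwise in $t$, the triangle inequality and $(a+b)^2\le 2a^2+2b^2$ give
\[
\|\nabla F(x(t))\|^2\ \le\ 2\|\nabla_x\mathcal L_\lambda^*(x(t))\|^2+2\|\nabla_x\mathcal L_\lambda^*(x(t))-\nabla F(x(t))\|^2 .
\]
Taking $\mathbb E_t$ over $t\sim\mathrm{Unif}[0,T]$, it suffices to make each of the two expectations at most $\varepsilon/4$.

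\textbf{Bias term.} The choice $\lambda\ge\bar\lambda$ makes Lemma~\ref{lem:bridge} applicable. It gives $\|\nabla_x\mathcal L_\lambda^*(x)-\nabla F(x)\|\le C_{\mathrm{br}}(x)/\lambda$. Along the trajectory (the region of interest, where Assumption~\ref{ass:4} keeps $C_{\mathrm{br}}(x)$ finite) we have $C_{\mathrm{br}}(x)\le C_{\mathrm{br}}$. Since $\lambda\ge 2C_{\mathrm{br}}/\sqrt\varepsilon$, the bias is at most $\sqrt\varepsilon/2$, so its square is at most $\varepsilon/4$ uniformly in $t$. Hence twice this term contributes at most $\varepsilon/2$.

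\textbf{Optimization term.} Because $\bar\lambda=2L_{fy}/\mu$ and $\lambda>\rho/\mu$ is needed for Theorem~\ref{thm:fixed-lambda-simple}, I take the standing hypotheses to include $\lambda>\rho/\mu$. With $\delta\ge\delta_0(\lambda)$ and $\eta\ge\eta_0$ chosen for this $\lambda$, Theorem~\ref{thm:fixed-lambda-simple} yields
\[
\mathbb E_t\|\nabla_x\mathcal L_\lambda^*(x(t))\|^2\le \frac{W_0-\mathcal L^*_{\lambda,\inf}}{c_1T},
\]
where $W_0:=W(x(0),y(0),z(0))$. The stated lower bound on $T$ makes this at most $\varepsilon/4$, so twice it is at most $\varepsilon/2$. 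Adding the two contributions gives $\mathbb E_t\|\nabla F(x(t))\|^2\le\varepsilon$.

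\textbf{Main obstacle.} The delicate point is bookkeeping rather than analysis. First, one must ensure the trajectory stays in the region where Assumption~\ref{ass:4} and $C_{\mathrm{br}}(x)\le C_{\mathrm{br}}$ hold. I would argue this through $\dot W\le 0$, which keeps the trajectory in the sublevel set $\{W\le W_0\}$, or else simply take the region of interest to contain the trajectory. Second, both $W_0$ and $\mathcal L^*_{\lambda,\inf}$ depend on $\lambda$, so the bound on $T$ is $\lambda$-dependent. This is consistent with how the statement is phrased, since $T$ is specified after $\lambda$ is fixed. I would note explicitly that no uniform-in-$\lambda$ control is claimed.
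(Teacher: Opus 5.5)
Your proposal is correct and matches the paper's proof step for step. Both use the triangle inequality with $(a+b)^2\le 2a^2+2b^2$. Both use the bridge bound $C_{\mathrm{br}}/\lambda\le\sqrt{\varepsilon}/2$ for one $\varepsilon/2$ contribution, and Theorem~\ref{thm:fixed-lambda-simple} with the stated lower bound on $T$ for the other. Your remarks that $\lambda>\rho/\mu$ is inherited from the theorem's hypotheses, and that the trajectory must lie in the region of interest, are sound bookkeeping that the paper leaves implicit.
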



\subsection{Unified Construction for Convex Inner Problems}\label{sec:mere}
Our unified Lyapunov analysis for \ref{eq:P1}--\ref{eq:P31} relies on two structural ingredients:
(i) a deviation-to-residual conversion that links $x$--gradient mismatches to inner residuals,
and (ii) a smooth envelope property for the outer value function.
Strong convexity/concavity is a convenient sufficient condition that simultaneously guarantees both. This section makes precise, step-by-step, where the $O(1/T)$ ergodic stationarity template breaks when the inner problems are merely convex/concave, and how to repair the argument using modular replacements (error bounds, PL/sharpness, or regularization) while keeping the same Lyapunov construction. 

We first focus on the convex/concave regime where the lower-level optimizer map is single-valued,
so the template breaks only at the two modules \ref{step:S1}--\ref{step:S2}.
When the lower-level solution is set-valued, an additional difficulty arises (nonsmooth envelopes and selection dependence);
we defer this extension to Appendix,
where we show that under suitable sensitivity regularity one can still proceed by
modifying only \ref{step:S1}--\ref{step:S2} while keeping the same Lyapunov construction.

If the inner objectives are mere convex/concave, then:
\begin{itemize}[leftmargin=*]
\item \textbf{Failure at \ref{step:S1}} 
In general, $\arg\min$/$\arg\max$ sets are multi-valued, and value functions such as
$\max_y f(x,y)$ or $\min_y g(x,y)$ need not be differentiable.
Thus the identity 
\begin{equation}\label{eq:envelope}
\nabla\Phi(x)=\nabla_x\psi(x,u^*(x)) 
\end{equation}
used to produce the descent term may break.

\item \textbf{Failure at \ref{step:S2}}
Lemma~\ref{lem:deviation} requires $\mu$--strong convexity of $\psi(x,\cdot)$ to obtain
\[
\|D\|
\;=\;
\|\nabla_x\psi(x,u)-\nabla_x\psi(x,u^*)\|
\;\lesssim\;
\|u-u^*\|,\;
\|u-u^*\|
\;\lesssim\;
\|\nabla_u\psi(x,u)\|.
\]
Without strong convexity/concavity of the inner objective, Lemma~\ref{lem:deviation} is not guaranteed to hold.
\end{itemize}
When strong convexity/concavity is dropped, the proof template breaks exactly at these two modules, and the repairs can be described in a problem-agnostic way.


\paragraph{Repairing \ref{step:S1}.}
When inner optimizers are nonunique, $\Phi$ can be nonsmooth and \eqref{eq:envelope} may fail.
A standard remedy is to use nonsmooth stationarity
$\operatorname{dist}(0,\partial\Phi(x))$ with a nonsmooth chain rule, or to restore differentiability via smoothing/regularization.
We do not pursue these nonsmooth-envelope variants here, as they would require a separate nonsmooth analysis
(e.g., subgradient chain rules and differential inclusions), and instead focus on the modular repair of \ref{step:S2} below.

\begin{assumption}[Error Bound]\label{assump:inner-EB}
Let $\psi:\mathbb R^{d_x}\times\mathbb R^{d_u}\to\mathbb R$ be differentiable and convex in $u$.
Let $\mathcal R\subseteq\mathbb R^{d_x}\times\mathbb R^{d_u}$ be a region of interest and
$U^*(x):=\arg\min_u \psi(x,u)$ be nonempty for all $(x,u)\in\mathcal R$.
We assume that $\psi$ satisfies a (local) gradient error bound on $\mathcal R$: there exists $\kappa\in(0,\infty)$ such that
$\operatorname{dist}\bigl(u,U^*(x)\bigr)\ \le\ \kappa\,\|\nabla_u \psi(x,u)\|
\quad(\forall\, (x,u)\in\mathcal R)$.
\end{assumption}
\paragraph{Repairing \ref{step:S2}.} Assumption~\ref{assump:inner-EB} reinstates the missing metric control needed for Lyapunov absorption.
In smooth convex settings, it is implied (up to constants) by standard sharpness conditions on the relevant sublevel set,
such as PL/gradient-dominance and quadratic growth; see Appendix~\ref{app:eb-interface}.

\begin{lemma}\label{lem:deviation-EB}
Assume $\nabla_x\psi(x,u)$ is $L_{xu}$--Lipschitz in $u$ on $\mathcal R$. If Assumption~\ref{assump:inner-EB} holds on $\mathcal R$, then for all $(x,u)\in\mathcal R$, $\inf_{u^*\in U^*(x)}
\bigl\|\nabla_x\psi(x,u)-\nabla_x\psi(x,u^*)\bigr\|
\ \le\ L_{xu}\,\kappa\,\|\nabla_u\psi(x,u)\|$.
\end{lemma}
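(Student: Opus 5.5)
The argument combines the Lipschitz property of $\nabla_x\psi$ in $u$ with the error bound of Assumption~\ref{assump:inner-EB}, via a projection onto the solution set. Fix $(x,u)\in\mathcal R$.

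\textbf{Step 1: the solution set is closed and convex.} Since $\psi(x,\cdot)$ is convex and differentiable, $U^*(x)=\{v:\nabla_u\psi(x,v)=0\}$. It is nonempty by assumption. It is convex because it is the minimizer set of a convex function. It is closed because it is a sublevel set $\{v:\psi(x,v)\le \min\psi(x,\cdot)\}$ of a continuous function. Hence the Euclidean projection $\bar u:=\Pi_{U^*(x)}(u)$ exists, with $\|u-\bar u\|=\operatorname{dist}(u,U^*(x))$.

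\textbf{Step 2: chain of bounds.} Because $\bar u\in U^*(x)$,
\[
\inf_{u^*\in U^*(x)}\|\nabla_x\psi(x,u)-\nabla_x\psi(x,u^*)\|\le \|\nabla_x\psi(x,u)-\nabla_x\psi(x,\bar u)\|\le L_{xu}\|u-\bar u\|.
\]
The last inequality is the $L_{xu}$-Lipschitz property of $\nabla_x\psi$ in $u$. The right-hand side equals $L_{xu}\operatorname{dist}(u,U^*(x))$, and Assumption~\ref{assump:inner-EB} bounds this by $L_{xu}\kappa\|\nabla_u\psi(x,u)\|$, which is the claim.

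\textbf{Where care is needed.} The only delicate point is that the Lipschitz bound is assumed only on $\mathcal R$, while the segment or pair $(x,\bar u)$ may leave $\mathcal R$. I would handle this in one of two ways.

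First, I would read the hypothesis as a two-point Lipschitz condition valid whenever $(x,u)\in\mathcal R$ and $u^*\in U^*(x)$, as the lemma intends; this is natural because $\mathcal R$ is the region on which $U^*(x)$ is defined.

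Second, if that reading is not available, I would avoid the projection altogether. For any $\epsilon>0$, choose $u_\epsilon\in U^*(x)$ with $\|u-u_\epsilon\|\le \operatorname{dist}(u,U^*(x))+\epsilon$ and $(x,u_\epsilon)$ in the domain where the Lipschitz bound applies. Running the same chain gives the bound plus $L_{xu}\epsilon$, and letting $\epsilon\to0$ removes the extra term. This variant needs only nonemptiness of $U^*(x)$, not closedness, so Step 1 becomes unnecessary.
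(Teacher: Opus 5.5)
Your argument is correct and is essentially the paper's proof: the paper applies the Lipschitz bound to every $u^*\in U^*(x)$ and takes the infimum directly to get $L_{xu}\operatorname{dist}(u,U^*(x))$, then invokes the error bound. Your projection step and the closedness/convexity check are therefore unnecessary, and your $\epsilon$-variant is exactly the paper's route. Note that the paper, like your first reading, implicitly treats the Lipschitz hypothesis as applying to pairs $(x,u),(x,u^*)$ with $u^*\in U^*(x)$. Your second fallback does not actually remove this domain issue, since it still presupposes that near-projections onto $U^*(x)$ lie in the region where the Lipschitz bound holds.
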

Once the deviation-to-residual conversion~(\ref{eq:DyDz-conv-main}) holds (with constants $C_y,C_z$),
Lemma~\ref{lem:timescale} applies without modification.
In particular, the absorption conditions retain the same algebraic form
$\tau_y \ge K_y C_y^2/\alpha$ and $\tau_z \ge K_z C_z^2/\beta$.
Compared to the strongly convex case (where $\kappa=1/\mu$), Assumption~\ref{assump:inner-EB} replaces $1/\mu$ by $\kappa$,
so the required time-scale separation deteriorates through the same $C^2$ dependence in the thresholds.

\begin{figure*}[t]
\centering
\begin{subfigure}[t]{0.49\textwidth}
    \centering
    \includegraphics[width=\linewidth]{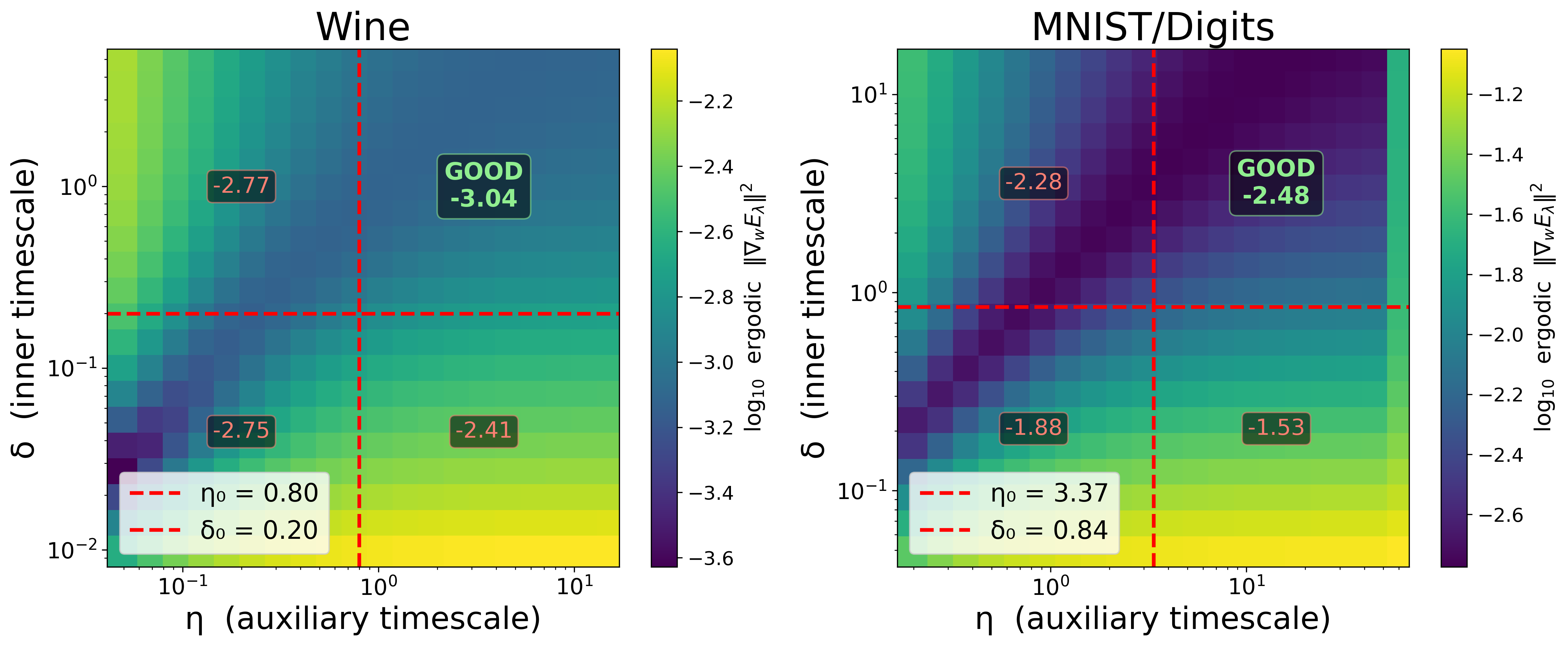}
    \caption{Regularized setting with $\rho_{\rm reg}=0.05$.}
    \label{fig:euler-heatmap-regularized}
\end{subfigure}
\hfill
\begin{subfigure}[t]{0.49\textwidth}
    \centering
    \includegraphics[width=\linewidth]{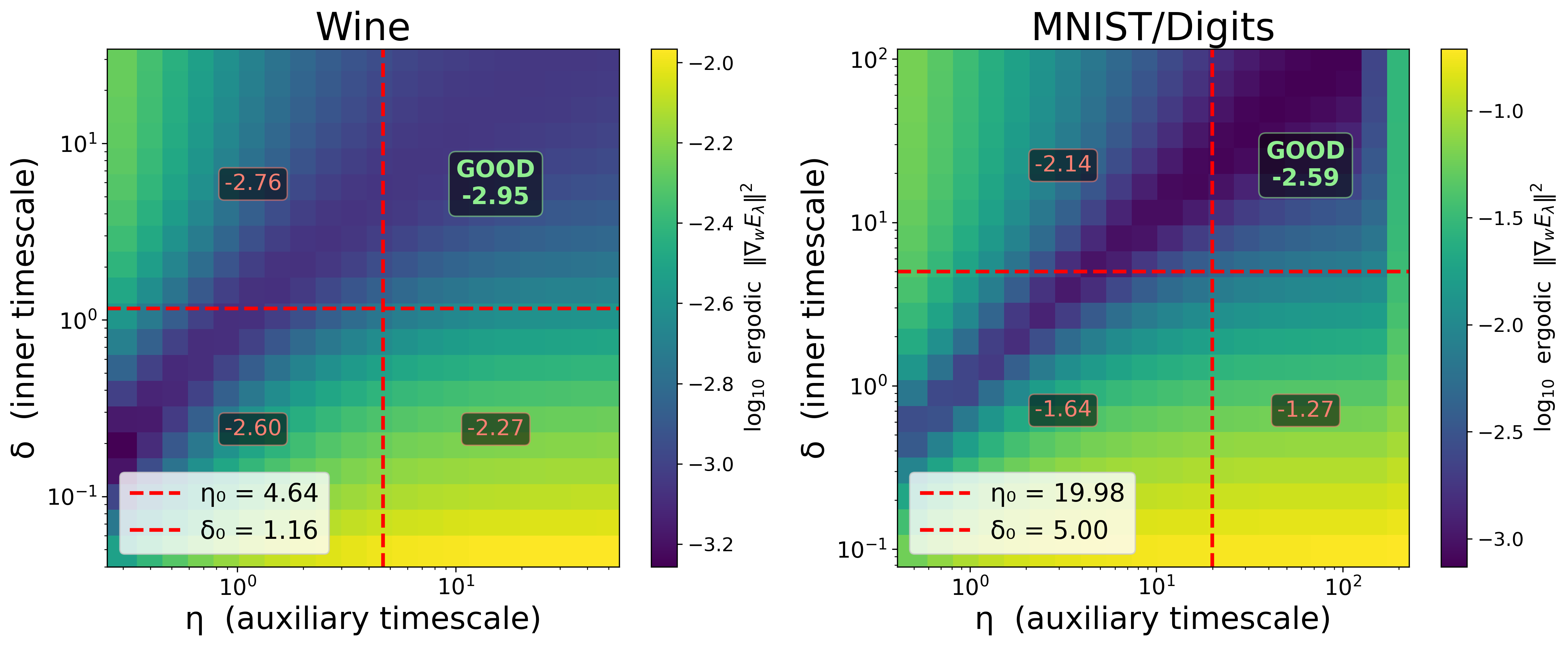}
    \caption{Weakened inner geometry with $\rho_{\rm reg}=0.02$.}
    \label{fig:euler-heatmap-weakgeom}
\end{subfigure}
\caption{
Forward-Euler diagnostics for lifted-penalty hypercleaning.
Left: $\rho_{\rm reg}=0.05$; right: weakened geometry with
$\rho_{\rm reg}=0.02$. Here $\rho_{\rm reg}$ denotes the lower-level $\ell_2$ regularization coefficient in the objective. Dashed lines mark
$\delta_0$ and $\eta_0$.
Darker values indicate smaller
$\log_{10}\|\nabla_w E_\lambda\|^2$.
}
\label{fig:euler-heatmap}
\vspace{-0.5cm}
\end{figure*}

\subsection{Discrete-time interpretation and diagnostics.}
This continuous-time viewpoint also provides a useful reference point for stable Euler discretizations. The S1--S3 normal form also gives a direct Euler one-step interpretation:
for $\dot u=G(u)$ and $u_{k+1}=u_k+hG(u_k)$,
\[
W(u_{k+1})-W(u_k)
\le h\dot W(u_k)+O(h^2\|G(u_k)\|^2).
\]
Hence discretization preserves the continuous-time dissipation at first order and adds only a controllable remainder; see Appendix~\ref{app:euler-discretization}. Under weaker geometry, the EB/PL/QG interface changes the conversion constant $C$, shifting the thresholds through the same $C^2$ rule. Figure~\ref{fig:euler-heatmap} illustrates this on Wine and Digits hypercleaning~\cite{franceschi2018bilevel,shaban2019truncated}: when $\rho_{\rm reg}$ is reduced from $0.05$ to $0.02$, the predicted threshold increase is about $6.25$, the observed shifts are $5.8$--$6.0$, and the above-threshold region remains favorable. Details are in Appendix~\ref{app:euler-hypercleaning}.

\section{Conclusion}
We introduced a value-function-level continuous-time Lyapunov template for minimax, lifted-penalty bilevel, and min--min--max dynamics. The S1--S3 decomposition separates deviation-to-residual conversion from time-scale absorption, yielding closed-form thresholds and making the dependence on inner geometry explicit through EB constants. The finite-time tracking bound connects the penalty dynamics to the ideal hyper-gradient flow, while the Euler analogue and hypercleaning diagnostics indicate that the resulting relative time-scale structure remains visible under stable discretization. Overall, the framework provides a modular route for analyzing nested optimization dynamics across problem classes and weaker inner geometries.

\section*{Acknowledgements}
HL is partially supported by the National Research Foundation of Korea (NRF) grant funded by the Korea government (MSIT) (No. RS-2024-00408003) and by a KIAS Individual Grant (AP103101) via the Center for AI and Natural Sciences at the Korea Institute for Advanced Study. DK is partially supported by the National Research Foundation of Korea (NRF) grant funded by the Korea government (MSIT) (No. RS-2023-00252516, No. RS-2024-00408003, and No. RS-2026-25488663), the POSCO Science Fellowship of POSCO TJ Park Foundation, and the Korea Institute for Advanced Study.

\bibliography{references}

\clearpage
\clearpage
\appendix
\section{Auxiliary Lemmas}
\subsection{Instantiation Dictionary for the Unified Lemmas}\label{app:diction}
We list the concrete objects $(D_y,D_z,r_y,r_z,\tau_y,\tau_z)$ in
Lemmas~\ref{lem:deviation}--\ref{lem:timescale} for \ref{eq:P1}--\ref{eq:P31}. In each case, $D_y,D_z$ are the $x$-gradient
mismatches induced by imperfect tracking, $r_y,r_z$ are the inner residuals driven by the auxiliary dynamics, and $\tau_y,\tau_z$ are the effective time-scale coefficients multiplying $\|r_y\|^2,\|r_z\|^2$ in $\dot W$.

\begin{table}[h]
\centering
\small
\setlength{\tabcolsep}{4.0pt}
\begin{tabular}{@{}lccc@{}}
\toprule
 & (P1) Minimax & (P2) Bilevel & (P3) Min--min--max \\
\midrule
$r_y$ &
$\nabla_y f(x,y)$ &
$\nabla_y \mathcal L_\lambda(x,y)$ &
$\nabla_y f(x,y)$ \\

$r_z$ &
-- &
$\nabla_y g(x,z)$ &
$\nabla_y g(x,z)$ \\

$D_y$ &
$\nabla_x f(x,y)-\nabla_x f\bigl(x,y^*(x)\bigr)$ &
$\nabla_x \mathcal L_\lambda(x,y)-\nabla_x \mathcal L_\lambda\bigl(x,y_\lambda^*(x)\bigr)$ &
$\nabla_x f(x,y)-\nabla_x f\bigl(x,y^*(x)\bigr)$ \\

$D_z$ &
-- &
$\nabla_x g(x,z)-\nabla g^*(x)$ &
$\nabla_x g(x,z)-\nabla_x g\bigl(x,z^*(x)\bigr)$ \\

$\tau_y$ &
$\gamma$ &
$\delta$ &
$\delta$ \\

$\tau_z$ &
-- &
$\eta\,\lambda^{2}$ &
$\eta$ \\
\bottomrule
\end{tabular}
\caption{Problem-wise definitions of $(r_y,r_z)$, $(D_y,D_z)$, and $(\tau_y,\tau_z)$ in Lemmas~\ref{lem:deviation}--\ref{lem:timescale}.}
\label{tab:normal-form-dictionary_long}
\end{table}

We next summarize, for each problem, the Lyapunov derivative $\dot W$ in the normal form~\eqref{eq:timescale-premise},
together with the corresponding Young-type constants $c_0$, $K_y$, and $K_z$.

\paragraph{(P1) Minimax.}
From Appendix~\ref{subsec:proof-minmax}~(see \eqref{eq:W-star-minimax}), one can write
\[
\dot W \ \le\ -c_0\|\nabla\Phi(x)\|^2 + K_y\|D_y\|^2 - \alpha\,\tau_y\|r_y\|^2,
\qquad(\beta=0,\ D_z=r_z=0),
\]
with $\Phi(x)=F(x)$, $\tau_y=\gamma$, and
\[
c_0 = 1-\frac{(1-\alpha)\varepsilon}{2},
\qquad
K_y = \alpha+\frac{1-\alpha}{2\varepsilon}.
\]
(Setting $\alpha=\tfrac12$ and $\varepsilon=2$ yields $c_0=\tfrac12$ and matches Proposition~4.1.)

\paragraph{(P2) Bilevel.}
From Appendix~\ref{subsec:proof-bilevel}~(see \eqref{eq:Wdot-mid-bilevel}), we obtain
\[
\dot W \ \le\ -c_0\|\nabla\Phi(x)\|^2 + K_y\|D_y\|^2 + K_z\|D_z\|^2
-\alpha\,\tau_y\|r_y\|^2-\beta\,\tau_z\|r_z\|^2,
\]
with $\Phi(x)=\mathcal L_\lambda^*(x)$, $(\tau_y,\tau_z)=(\delta,\eta\lambda^2)$, and
\[
c_0=c_1 := (1-\alpha)\Bigl(1-\frac{\varepsilon_1}{2}\Bigr)-\lambda(1-\beta)\frac{\varepsilon_2}{2},
\]
\[
K_y := \frac{1-\alpha}{2\varepsilon_1}+\frac{\theta|\alpha-\beta|}{2},
\qquad
K_z := \frac{\lambda(1-\beta)}{2\varepsilon_2}+\beta\lambda^{2}
+\frac{\lambda^2|\alpha-\beta|}{2\theta},
\]
In particular, choosing $\varepsilon_2$ as in \eqref{eq:eps2-choice} makes $c_0=c_1>0$ independent of $\lambda$.

\paragraph{(P3) Min--min--max.}
From Appendix~\ref{sec:minminmax}~(see \eqref{eq:Wdot-mid} therein), we obtain the same normal form
\[
\dot W
\ \le\
-\,c_0\|\nabla \Phi(x)\|^2
\ +\ K_y\|D_y\|^2
\ +\ K_z\|D_z\|^2
\ -\ \alpha\,\tau_y\,\|r_y\|^2
\ -\ \beta\,\tau_z\,\|r_z\|^2,
\]
with $\Phi(x)=\mathcal L(x)$, $(\tau_y,\tau_z)=(\delta,\eta)$, and the Young-type constants
\[
c_0 \;:=\; 1-\frac{(1+\alpha)\varepsilon_1}{2}-\frac{(1-\beta)\varepsilon_2}{2},
\qquad
K_y \;:=\; \frac{1+\alpha}{2\varepsilon_1}+\frac{|\alpha-\beta|}{2},
\qquad
K_z \;:=\; \frac{1-\beta}{2\varepsilon_2}+\frac{|\alpha-\beta|}{2}+\beta.
\]

\clearpage
\subsection{Auxiliary Lemmas}\label{subsec:aux-minmax}

\begin{definition}\label{def:SC}
A continuously differentiable function $h:\mathbb{R}^n\to\mathbb{R}$ is called $\mu$--strongly convex
if there exists $\mu>0$ such that for all $u,v\in\mathbb{R}^n$,
\[
h(v)\ \ge\ h(u)\ +\ \langle \nabla h(u),\,v-u\rangle\ +\ \frac{\mu}{2}\|v-u\|^{2}.
\]
\end{definition}

\begin{definition}\label{def:weak-convex}
A continuously differentiable function $h:\mathbb{R}^n\to\mathbb{R}$ is called $\rho$--weakly convex
if there exists $\rho\ge 0$ such that for all $u,v\in\mathbb{R}^n$,
\[
h(v)\ \ge\ h(u)\ +\ \langle \nabla h(u),\,v-u\rangle\ -\ \frac{\rho}{2}\|v-u\|^{2}.
\]
\end{definition}

\begin{lemma}\label{lem:coercive}
If $h$ is $\mu$--strongly convex and continuously differentiable, then for all $u,v$,
\[
\mu\,\|u-v\|\ \le\ \|\nabla h(u)-\nabla h(v)\|.
\]
\end{lemma}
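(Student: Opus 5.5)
The plan is to use the strong convexity inequality of Definition~\ref{def:SC} twice, once from each endpoint, and then add the two copies. Applying it at $u$ with test point $v$ gives
\[
h(v)\ \ge\ h(u)+\langle\nabla h(u),v-u\rangle+\tfrac{\mu}{2}\|v-u\|^2 .
\]
Swapping the roles of $u$ and $v$ gives
\[
h(u)\ \ge\ h(v)+\langle\nabla h(v),u-v\rangle+\tfrac{\mu}{2}\|u-v\|^2 .
\]

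When the two inequalities are added, the function values cancel. After rearranging, what remains is the strong monotonicity of the gradient:
\[
\langle \nabla h(u)-\nabla h(v),\,u-v\rangle\ \ge\ \mu\|u-v\|^2 .
\]

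Next, the left-hand side is bounded above by Cauchy--Schwarz, $\|\nabla h(u)-\nabla h(v)\|\,\|u-v\|$. This yields
\[
\mu\|u-v\|^2\le \|\nabla h(u)-\nabla h(v)\|\,\|u-v\| .
\]

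There are two cases. If $u=v$, the claim is trivial. Otherwise $\|u-v\|>0$, and dividing by it gives $\mu\|u-v\|\le\|\nabla h(u)-\nabla h(v)\|$.

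There is no real obstacle here. The only point needing care is the division step, which requires the case split on $u=v$ handled above. Continuous differentiability is used only to make sense of $\nabla h$ in Definition~\ref{def:SC}.
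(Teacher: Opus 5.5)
Your proof is correct and follows the same route as the paper: strong monotonicity of the gradient, then Cauchy--Schwarz, then division by $\|u-v\|$. The differences are small. You derive strong monotonicity by adding the two strong-convexity inequalities, where the paper cites it as standard, and you handle the case $u=v$ explicitly, which the paper omits.
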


\begin{proof}
A standard property of $\mu$--strongly convex functions is the strong monotonicity of the gradient:
\[
\langle \nabla h(u)-\nabla h(v),\,u-v\rangle\ \ge\ \mu\|u-v\|^{2}.
\]
Applying Cauchy--Schwarz to the left-hand side gives
$\|\nabla h(u)-\nabla h(v)\|\,\|u-v\|\ge \mu\|u-v\|^2$, and dividing by $\|u-v\|$ yields the claim.
\end{proof}

\begin{lemma}\label{lem:one-sided}
Assume that for every $y\in Y$, the mapping $x\mapsto f(x,y)$ is $\mu_x$--strongly convex on $X$,
and for every $x\in X$, the mapping $y\mapsto f(x,y)$ is $\mu_y$--strongly concave on $Y$.
Then for any $x,x'\in X$ and any $y,y'\in Y$,
\begin{align}
\big\langle \nabla_x f(x,y)-\nabla_x f(x',y),\,x-x'\big\rangle
&\ge \mu_x\,\|x-x'\|^2,
\label{eq:scx}\\
\big\langle \nabla_y f(x,y)-\nabla_y f(x,y'),\,y-y'\big\rangle
&\le -\mu_y\,\|y-y'\|^2.
\label{eq:sccy}
\end{align}
\end{lemma}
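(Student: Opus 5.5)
The plan is to reduce both inequalities to the strong monotonicity of the gradient of a strongly convex function, applied one block at a time with the other variable held fixed. This is the same monotonicity fact already used in the proof of Lemma~\ref{lem:coercive}.

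For \eqref{eq:scx}, fix $y\in Y$ and set $h(\cdot):=f(\cdot,y)$, which is $\mu_x$--strongly convex by hypothesis. Apply Definition~\ref{def:SC} twice: once with base point $x'$ and target $x$, and once with base point $x$ and target $x'$. This gives
\[
h(x)\ge h(x')+\langle\nabla h(x'),x-x'\rangle+\tfrac{\mu_x}{2}\|x-x'\|^2,
\]
\[
h(x')\ge h(x)+\langle\nabla h(x),x'-x\rangle+\tfrac{\mu_x}{2}\|x-x'\|^2.
\]
Adding the two inequalities cancels the function values. Rearranging yields $\langle\nabla h(x)-\nabla h(x'),x-x'\rangle\ge\mu_x\|x-x'\|^2$. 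Since $\nabla h=\nabla_x f(\cdot,y)$, this is exactly \eqref{eq:scx}.

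For \eqref{eq:sccy}, fix $x\in X$ and set $k(\cdot):=-f(x,\cdot)$, which is $\mu_y$--strongly convex because $f(x,\cdot)$ is $\mu_y$--strongly concave. The same two-point summation gives $\langle\nabla k(y)-\nabla k(y'),y-y'\rangle\ge\mu_y\|y-y'\|^2$. Substituting $\nabla k=-\nabla_y f(x,\cdot)$ and multiplying by $-1$ reverses the inequality and gives \eqref{eq:sccy}.

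The only point needing care is a domain issue. Definition~\ref{def:SC} is stated on all of $\mathbb R^n$, whereas here strong convexity is only assumed on $X$ (respectively $Y$). I would read the definition as holding for all points of $X$ (respectively $Y$). The argument uses only the two points $x,x'$ (or $y,y'$) themselves, so no convexity of the set and no segment between the points is needed, and the proof goes through unchanged. Apart from this, the steps are routine.
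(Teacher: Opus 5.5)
Your proof is correct and follows the paper's own route: the paper simply cites strong monotonicity of $\nabla_x f(\cdot,y)$ and strong antimonotonicity of $\nabla_y f(x,\cdot)$. Adding the first-order inequality of Definition~\ref{def:SC} at two base points, applied to $f(\cdot,y)$ and to $-f(x,\cdot)$, is exactly the standard derivation of that fact. Your domain remark is also sound: under the first-order definition read on $X$ (resp.\ $Y$), only the two points themselves are used.
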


\begin{proof}
These are standard consequences of strong convexity of $f(\cdot,y)$ and strong concavity of $f(x,\cdot)$,
respectively, via strong monotonicity of $\nabla_x f(\cdot,y)$ and strong antimonotonicity of $\nabla_y f(x,\cdot)$.
\end{proof}

\begin{lemma}\label{lem:support}
Assume that for every $y\in Y$, the mapping $x\mapsto f(x,y)$ is convex on $X$,
and for every $x\in X$, the mapping $y\mapsto f(x,y)$ is concave on $Y$.
Then for any $x,x'\in X$ and any $y,y'\in Y$,
\begin{align}
f(x,y) &\ge f(x',y)+\big\langle \nabla_x f(x',y),\,x-x'\big\rangle,
\label{eq:support-x}\\
f(x,y) &\le f(x,y')+\big\langle \nabla_y f(x,y'),\,y-y'\big\rangle.
\label{eq:support-y}
\end{align}
\end{lemma}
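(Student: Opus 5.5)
The statement to prove is Lemma~\ref{lem:support}: the first-order supporting-hyperplane inequalities for a function that is convex in $x$ and concave in $y$. The plan is to reduce each inequality to the one-variable first-order characterization of a differentiable convex function, applied slice by slice. No strong convexity or monotonicity from Lemma~\ref{lem:one-sided} is needed; only the convex/concave structure in each block is used.

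For \eqref{eq:support-x}, fix $y\in Y$ and set $h(\cdot):=f(\cdot,y)$. This $h$ is convex and continuously differentiable on $X$, with $\nabla h(x')=\nabla_x f(x',y)$. We use the standard fact that a differentiable convex function lies above its tangent plane, $h(x)\ge h(x')+\langle\nabla h(x'),x-x'\rangle$. To prove this fact, take $s\in(0,1]$. Convexity along the segment (which requires $X$ convex, implicit in the statement) gives
\[
h(x'+s(x-x'))\le (1-s)h(x')+s\,h(x).
\]
Rearranging yields
\[
\frac{h(x'+s(x-x'))-h(x')}{s}\le h(x)-h(x').
\]
Letting $s\downarrow 0$, the left side tends to the directional derivative $\langle\nabla h(x'),x-x'\rangle$ by differentiability, which gives \eqref{eq:support-x}.

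For \eqref{eq:support-y}, fix $x\in X$ and apply the same argument to $k(\cdot):=-f(x,\cdot)$. This $k$ is convex on $Y$ and satisfies $\nabla k(y')=-\nabla_y f(x,y')$. The tangent-plane inequality for $k$ reads
\[
-f(x,y)\ge -f(x,y')-\langle\nabla_y f(x,y'),y-y'\rangle,
\]
and multiplying by $-1$ gives \eqref{eq:support-y}.

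There is no real obstacle. The only point needing care is that the segments joining $x'$ to $x$ and $y'$ to $y$ must lie in the domains, so we assume $X$ and $Y$ are convex sets (or all of $\mathbb R^{d}$, as in the rest of the paper). We also assume the gradients are taken in the ambient space, so that the limit of the difference quotient is justified.
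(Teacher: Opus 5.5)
Your proof is correct and follows the same route as the paper, which simply invokes the first-order characterization of convexity for $f(\cdot,y)$ and for $-f(x,\cdot)$. You additionally derive that characterization from the difference quotient, and you rightly note that convexity of $X$ and $Y$ is needed, which the paper leaves implicit.
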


\begin{proof}
The first inequality follows from convexity of $f(\cdot,y)$ in $x$, and the second follows from concavity
of $f(x,\cdot)$ in $y$ (equivalently, convexity of $-f(x,\cdot)$), both via the first-order characterization.
\end{proof}

\begin{lemma}\label{lem:minimax-inner-residual}
Suppose Assumptions~\ref{ass:1} holds and let $t\sim\mathrm{Unif}[0,T]$.
If $\gamma>\frac{5}{4}\bigl(\frac{L_{xy}}{\mu}\bigr)^2$, then
\[
\mathbb E_t\!\left[\bigl\|\nabla_y f\bigl(x(t),y(t)\bigr)\bigr\|^{2}\right]
\ \le\
\frac{W_{1/2}(x(0),y(0)) - F_{\inf}}{c_y(\gamma)\,T}.
\]
\end{lemma}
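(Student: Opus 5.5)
We will redo the Lyapunov computation behind Proposition~\ref{prop:minimax-ergodic}, but this time keep the residual dissipation instead of throwing it away. Write $W=W_{1/2}=\tfrac32F(x)-\tfrac12 f(x,y)$, $r_y=\nabla_y f(x,y)$ and $D_y=\nabla_x f(x,y)-\nabla F(x)$. By Danskin, $\nabla F(x)=\nabla_x f(x,y^*(x))$. Along the flow \eqref{eq:flow-minimax-unified} we have $\dot x=-\nabla_x f(x,y)=-(\nabla F+D_y)$ and $\dot y=\gamma r_y$. Therefore
\[
\dot W=\tfrac32\langle\nabla F,\dot x\rangle-\tfrac12\langle\nabla_x f,\dot x\rangle-\tfrac12\langle r_y,\dot y\rangle
=-\tfrac32\langle\nabla F,\nabla F+D_y\rangle+\tfrac12\|\nabla F+D_y\|^2-\tfrac{\gamma}{2}\|r_y\|^2 .
\]
Expanding the square gives
\[
\dot W=-\|\nabla F\|^2-\tfrac12\langle\nabla F,D_y\rangle+\tfrac12\|D_y\|^2-\tfrac\gamma2\|r_y\|^2 .
\]
We apply Young's inequality in the form $-\tfrac12\langle\nabla F,D_y\rangle\le\tfrac12\|\nabla F\|^2+\tfrac18\|D_y\|^2$. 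This is the choice $\varepsilon=2$ in the dictionary, and it gives $c_0=\tfrac12$ and $K_y=\tfrac58$. The result is
\[
\dot W\le-\tfrac12\|\nabla F\|^2+\tfrac58\|D_y\|^2-\tfrac\gamma2\|r_y\|^2 .
\]

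Next we use Lemma~\ref{lem:deviation} with $\psi=-f(x,\cdot)$, which is $\mu$-strongly convex and whose $x$-gradient is $L_{xy}$-Lipschitz in $y$. It gives $\|D_y\|\le (L_{xy}/\mu)\|r_y\|$. Substituting,
\[
\dot W\le-\tfrac12\|\nabla F\|^2-\Bigl(\tfrac\gamma2-\tfrac58(L_{xy}/\mu)^2\Bigr)\|r_y\|^2=-\tfrac12\|\nabla F\|^2-c_y(\gamma)\|r_y\|^2 .
\]
Since $\gamma>\tfrac54(L_{xy}/\mu)^2$, we have $c_y(\gamma)>0$. We now drop the nonpositive term $-\tfrac12\|\nabla F\|^2$, which leaves $\dot W\le -c_y(\gamma)\|\nabla_y f(x(t),y(t))\|^2$ for a.e.\ $t$.

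We integrate this over $[0,T]$ to get
\[
c_y(\gamma)\int_0^T\|r_y\|^2\,dt\le W(0)-W(T).
\]
To bound $W(T)$ from below, note that $F(x)-f(x,y)\ge0$, so $W=F+\tfrac12(F-f)\ge F\ge F_{\inf}$. Hence $W(0)-W(T)\le W_{1/2}(x(0),y(0))-F_{\inf}$. Dividing by $c_y(\gamma)T$ and reading $\tfrac1T\int_0^T$ as $\mathbb E_t$ for $t\sim\mathrm{Unif}[0,T]$ gives the claim.

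The main obstacle is bookkeeping: the Young constant has to be chosen so that exactly $\tfrac58(L_{xy}/\mu)^2$ appears, which matches $c_y(\gamma)$. We also need to justify differentiating $F(x(t))$ along the trajectory. For this we rely on Danskin's theorem and the absolute continuity of $W(t)$, which are already used for Proposition~\ref{prop:minimax-ergodic}.
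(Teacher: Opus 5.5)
Your proposal is correct and takes essentially the same route as the paper. The paper integrates the dissipation inequality $\dot W_{1/2}\le-\tfrac12\|\nabla F\|^2-c_y(\gamma)\|\nabla_y f\|^2$, derived in the proof of Corollary~\ref{cor:minimax-tracking} from the same expansion, Young's inequality with $\varepsilon=2$, and Lemma~\ref{lem:deviation}; it then drops the $\|\nabla F\|^2$ term and uses $W_{1/2}\ge F\ge F_{\inf}$, exactly as you do, and your explicit constants $\tfrac12$ and $\tfrac58$ check out.
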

\begin{proof}
Integrating \eqref{eq:minimax-dissipation} over $[0,T]$ and using $W_{1/2}(x,y)\ge F(x)\ge F_{\inf}$ give
\[
\frac12\int_0^T \|\nabla F(x(s))\|^2\,ds
\ +\ 
c_y(\gamma)\int_0^T \|\nabla_y f(x(s),y(s))\|^2\,ds
\ \le\
W_{1/2}(x(0),y(0)) - F_{\inf}.
\]
Dropping the first nonnegative term and dividing by $T$ yields
\[
\mathbb E_t\!\left[\|\nabla_y f(x(t),y(t))\|^2\right]
=
\frac{1}{T}\int_0^T \|\nabla_y f(x(s),y(s))\|^2\,ds
\ \le\
\frac{W_{1/2}(x(0),y(0)) - F_{\inf}}{c_y(\gamma)\,T}.
\]
\end{proof}

\subsection{Common Lemmas}
We state two common lemmas that drive the unified analysis: a deviation-to-residual conversion and a time-scale absorption principle.

\subsubsection{Proof of Lemma~\ref{lem:deviation}}
\begin{proof}
Strong convexity implies strong monotonicity of $\nabla_u\psi(x,\cdot)$:
$\langle \nabla_u\psi(x,u)-\nabla_u\psi(x,u^*),\,u-u^*\rangle\ge \mu\|u-u^*\|^2$.
Using $\nabla_u\psi(x,u^*)=0$ and Cauchy--Schwarz yields
$\mu\|u-u^*\|\le \|\nabla_u\psi(x,u)\|$.
By Lipschitzness of $\nabla_x\psi$ in $u$,
$\|\nabla_x\psi(x,u)-\nabla_x\psi(x,u^*)\|\le L_{xu}\|u-u^*\|$.
Combine the last two inequalities.
\end{proof}

\subsubsection{Proof of lemma~\ref{lem:timescale}}

\begin{proof}
Using $\|D_y\|^2\le C_y^2\|r_y\|^2$ and $\|D_z\|^2\le C_z^2\|r_z\|^2$ in \eqref{eq:timescale-premise} gives
\[
\begin{aligned}
\dot W(t)\le\ &-c_0\|\nabla\Phi(x(t))\|^2
+\bigl(K_yC_y^2-\alpha\tau_y\bigr)\|r_y(t)\|^2 \\
&+\bigl(K_zC_z^2-\beta\tau_z\bigr)\|r_z(t)\|^2.
\end{aligned}
\]
Under $\tau_y\ge K_yC_y^2/\alpha$ and $\tau_z\ge K_zC_z^2/\beta$, the last two coefficients are nonpositive, so we drop them.
\end{proof}





\section{Minimax Problem}
\subsection{Proof of Proposition~\ref{prop:minimax-ergodic}}\label{subsec:proof-minmax}

\begin{proof}
\textbf{[S1: smooth envelope.]}
Under $\mu$--strong concavity of $f(x,\cdot)$, for each $x$ the maximizer $y^*(x)$ is unique and
Danskin's theorem gives the smooth-envelope identity
\[
\Phi(x)\ :=\ \max_{y\in Y} f(x,y)\ =\ f\bigl(x,y^*(x)\bigr),
\qquad
\nabla\Phi(x)\ =\ \nabla_x f\bigl(x,y^*(x)\bigr).
\]

\textbf{Lyapunov choice (unified template).}
Define the nonnegative gap $H_y(x,y):=\Phi(x)-f(x,y)\ge 0$ and
\begin{equation}\label{eq:W-def-minimax}
W_\alpha(x,y)\ :=\ \Phi(x)\ +\ \alpha\,H_y(x,y)\qquad(\alpha>0).
\end{equation}

Along the coupled flow~\eqref{eq:flow-minimax-unified}, let
\[
A\ :=\ \nabla\Phi(x)\ =\ \nabla_x f\bigl(x,y^*(x)\bigr),\qquad
B\ :=\ \nabla_x f(x,y),\qquad
r_y\ :=\ \nabla_y f(x,y).
\]
Define the $x$--gradient deviation (mismatch)
\[
D_y\ :=\ B-A\ =\ \nabla_x f(x,y)-\nabla_x f\bigl(x,y^*(x)\bigr).
\]
Then $\dot x=-B$ and $\dot y=\gamma r_y$.

We have
\[
\frac{d}{dt}\Phi(x(t))=\langle \nabla\Phi(x(t)),\dot x(t)\rangle
=-\langle A,B\rangle.
\]
Using $\nabla_y f(x,y^*(x))=0$ and the chain rule,
\[
\frac{d}{dt}f\bigl(x(t),y^*(x(t))\bigr)=\langle A,\dot x\rangle,
\qquad
\frac{d}{dt}f\bigl(x(t),y(t)\bigr)=\langle B,\dot x\rangle+\langle r_y,\dot y\rangle.
\]
Hence $\dot H_y=\langle A-B,\dot x\rangle-\langle r_y,\dot y\rangle$ and therefore
\[
\dot H_y
=\langle -D_y,-B\rangle-\gamma\|r_y\|^2
=-\langle D_y,B\rangle-\gamma\|r_y\|^2.
\]
From~\eqref{eq:W-def-minimax},
\[
\dot W_\alpha=\dot\Phi+\alpha\dot H_y
=-(1+\alpha)\langle A,B\rangle+\alpha\|B\|^2-\alpha\gamma\|r_y\|^2.
\]
Substituting $B=A+D_y$ and expanding gives
\begin{equation}\label{eq:W-expand-minimax}
\dot W_\alpha
=-\|A\|^2+(\alpha-1)\langle A,D_y\rangle+\alpha\|D_y\|^2-\alpha\gamma\|r_y\|^2.
\end{equation}

\textbf{[S2: deviation-to-residual conversion.]}
By the cross-Lipschitz assumption $\|\nabla_x f(x,y)-\nabla_x f(x,y')\|\le L_{xy}\|y-y'\|$ and
$\mu$--strong concavity of $f(x,\cdot)$ (equivalently $\mu$--strong convexity of $-f(x,\cdot)$),
Lemma~\ref{lem:deviation} (applied to $\psi(x,y)=-f(x,y)$) yields
\begin{equation}\label{eq:Dy-ry-minimax}
\|D_y\|\ \le\ \frac{L_{xy}}{\mu}\,\|r_y\|.
\end{equation}

\textbf{[S3: absorption via time-scale separation.]}
Since $(1-\alpha)>0$ when $\alpha\in(0,1)$, Young's inequality gives for any $\varepsilon>0$,
\[
(\alpha-1)\langle A,D_y\rangle
\le
(1-\alpha)\Bigl(\frac{\varepsilon}{2}\|A\|^2+\frac{1}{2\varepsilon}\|D_y\|^2\Bigr).
\]
Inserting this into~\eqref{eq:W-expand-minimax} yields
\begin{equation}\label{eq:W-star-minimax}
\dot W_\alpha
\le
-\Bigl(1-\frac{(1-\alpha)\varepsilon}{2}\Bigr)\|A\|^2
+\Bigl(\alpha+\frac{1-\alpha}{2\varepsilon}\Bigr)\|D_y\|^2
-\alpha\gamma\|r_y\|^2.
\end{equation}
Using~\eqref{eq:Dy-ry-minimax}, we obtain
\[
\dot W_\alpha
\le
-\Bigl(1-\frac{(1-\alpha)\varepsilon}{2}\Bigr)\|A\|^2
+\Biggl(-\alpha\gamma+\Bigl(\alpha+\frac{1-\alpha}{2\varepsilon}\Bigr)\Bigl(\frac{L_{xy}}{\mu}\Bigr)^2\Biggr)\|r_y\|^2.
\]

Set $\alpha=\tfrac12$ and $\varepsilon=2$. Then
\[
\dot W_{1/2}(t)
\le
-\frac12\|A\|^2
-\Bigl(\frac{\gamma}{2}-\frac{5}{8}\Bigl(\frac{L_{xy}}{\mu}\Bigr)^2\Bigr)\|r_y\|^2.
\]
Thus, if $\gamma\ge \frac{5}{4}\bigl(\frac{L_{xy}}{\mu}\bigr)^2$, we have $\dot W_{1/2}(t)\le -\frac12\|A\|^2
= -\frac12\|\nabla\Phi(x(t))\|^2$.

Since $H_y\ge 0$, we have $W_{1/2}(x,y)\ge \Phi(x)\ge \Phi_{\inf}:=\inf_x\Phi(x)$.
Integrating over $[0,T]$ and letting $t\sim\mathrm{Unif}[0,T]$ yields
\[
\mathbb E_t\big[\|\nabla\Phi(x(t))\|^2\big]
\le
\frac{W_{1/2}(x(0),y(0))-\Phi_{\inf}}{(1/2)\,T},
\]
which is the desired $O(1/T)$ ergodic stationarity bound.
\end{proof}

\medskip

\subsection{Proof of Corollary~\ref{cor:minimax-tracking}}\label{subsec:proof-corr}
\begin{proof}
The proof of Proposition~\ref{prop:minimax-ergodic} in fact yields a joint dissipation bound that controls
not only the outer stationarity measure $\|\nabla F(x(t))\|$ but also the inner residual
$\|\nabla_y f(x(t),y(t))\|$.
Specifically, define the Lyapunov function with $\alpha=\tfrac12$:
\[
W_{1/2}(x,y)\ :=\ F(x)\ +\ \tfrac12\bigl(F(x)-f(x,y)\bigr).
\]
Starting from the general Lyapunov derivative estimate in the proof of
Proposition~\ref{prop:minimax-ergodic}, we obtain
\[
\dot W_\alpha
\le
-\Bigl(1-\frac{(1-\alpha)\varepsilon}{2}\Bigr)\|A\|^2
+\Biggl(-\alpha\gamma+\Bigl(\alpha+\frac{1-\alpha}{2\varepsilon}\Bigr)\Bigl(\frac{L_{xy}}{\mu}\Bigr)^2\Biggr)\|C\|^2.
\]
Set $\alpha=\tfrac12$ and $\varepsilon=2$. Then
\[
1-\frac{(1-\alpha)\varepsilon}{2}=\frac12=:c_1,
\qquad
-\alpha\gamma+\Bigl(\alpha+\frac{1-\alpha}{2\varepsilon}\Bigr)\Bigl(\frac{L_{xy}}{\mu}\Bigr)^2
=-\frac{\gamma}{2}+\frac{5}{8}\Bigl(\frac{L_{xy}}{\mu}\Bigr)^2=:-c_2.
\]
Consequently, if $\gamma\ge \frac{5}{4}\left(\frac{L_{xy}}{\mu}\right)^2$ (so that $c_2\ge 0$), we have
\[
\dot W_{1/2}(t)\ \le\ -c_1\|A\|^2-c_2\|C\|^2,
\]
where $A=\nabla F(x(t))$ and $C=\nabla_y f(x(t),y(t))$.
Equivalently, this can be written as
\begin{equation}\label{eq:minimax-dissipation}
\frac{d}{dt}W_{1/2}\bigl(x(t),y(t)\bigr)
\ \le\
-\frac12\,\bigl\|\nabla F(x(t))\bigr\|^{2}
\ -\
c_y(\gamma)\,\bigl\|\nabla_y f(x(t),y(t))\bigr\|^{2},
\end{equation}
with
\begin{equation}\label{eq:cy-def}
c_y(\gamma)\ :=\ \frac{\gamma}{2}\ -\ \frac{5}{8}\Bigl(\frac{L_{xy}}{\mu}\Bigr)^{2}.
\end{equation}
In particular, the strict condition $\gamma>\tfrac54(L_{xy}/\mu)^2$ ensures $c_y(\gamma)>0$ and yields quantitative tracking.
Fix any $x$. Since $f(x,\cdot)$ is $\mu$-strongly concave, $y^*(x)$ is unique and satisfies
$\nabla_y f(x,y^*(x))=0$. 
By $\mu$-strong concavity of $f(x,\cdot)$, we have
\[
\|y-y^*(x)\|\ \le\ \frac{1}{\mu}\,\|\nabla_y f(x,y)\|.
\]
Applying this pointwise at $(x,y)=(x(t),y(t))$ and squaring yields
\[
\|y(t)-y^*(x(t))\|^2 \ \le\ \frac{1}{\mu^2}\,\|\nabla_y f(x(t),y(t))\|^2.
\]
Taking $\mathbb E_t[\cdot]$ and invoking Lemma~\ref{lem:minimax-inner-residual} completes the proof.
\end{proof}

\section{Bilevel}

\subsection{Proof of Theorem~\ref{thm:fixed-lambda-simple}}\label{subsec:proof-bilevel}
\begin{proof}

\textbf{[S1: smooth envelope.]}
\noindent By Assumption~\ref{ass:2}, for each $x$ the minimizer $y^{*}(x)\in\arg\min_{y}g(x,y)$ is unique and satisfies
\[
\nabla_y g\bigl(x,y^{*}(x)\bigr)=\mathbf{0}.
\]
Since $g^{*}(x)=g\bigl(x,y^{*}(x)\bigr)$, 
\[
\nabla g^{*}(x)
=\nabla_x g\bigl(x,y^{*}(x)\bigr)
+\bigl(Dy^{*}(x)\bigr)^{\top}\nabla_y g\bigl(x,y^{*}(x)\bigr)
=\nabla_x g\bigl(x,y^{*}(x)\bigr).
\]

\medskip
\noindent By Assumption~\ref{ass:2}, for each $x$ 
\[
y_\lambda^{*}(x)\in\arg\min_{y}\mathcal L_\lambda(x,y)
\]
exists and is unique. Hence it satisfies the condition
\[
\nabla_y \mathcal L_\lambda\bigl(x,y_\lambda^{*}(x)\bigr)=\mathbf{0}.
\]
By definition of the value function,
\[
\mathcal L_\lambda^{*}(x)=\min_{y}\mathcal L_\lambda(x,y)=\mathcal L_\lambda\bigl(x,y_\lambda^{*}(x)\bigr).
\]
\[
\nabla \mathcal L_\lambda^{*}(x)
=\nabla_x \mathcal L_\lambda\bigl(x,y_\lambda^{*}(x)\bigr)
+\bigl(Dy_\lambda^{*}(x)\bigr)^{\top}\nabla_y \mathcal L_\lambda\bigl(x,y_\lambda^{*}(x)\bigr)
=\nabla_x \mathcal L_\lambda\bigl(x,y_\lambda^{*}(x)\bigr).
\]

\medskip
\noindent We now match the unified notation used in Section~\ref{sec:unified}.
Set the outer value $\Phi(x):=\mathcal L_\lambda^{*}(x)$ and define the gap terms
\[
H_y(x,y)\ :=\ \mathcal L_\lambda(x,y)-\mathcal L_\lambda^{*}(x)\ \ge 0,
\qquad
H_z(x,z)\ :=\ g(x,z)-g^{*}(x)\ \ge 0.
\]
Then the Lyapunov function can be written equivalently as
\[
W(x,y,z)\ =\ \Phi(x)+\alpha H_y(x,y)+\beta\lambda H_z(x,z)
          \ =\ (1-\alpha)\mathcal L_\lambda^{*}(x)+\alpha \mathcal L_\lambda(x,y)+\beta\lambda H_z(x,z).
\]

\medskip
\noindent Let
\[
A:=\nabla_x\mathcal L_\lambda^{*}(x)=\nabla \Phi(x),\qquad
r_y:=\nabla_y \mathcal L_\lambda(x,y)=\nabla_y f(x,y)+\lambda\nabla_y g(x,y),\qquad
r_z:=\nabla_y g(x,z).
\]
(So $r_y$ and $r_z$ correspond to $(r_y,r_z)$ in Table~\ref{tab:normal-form-dictionary_long}.)

Define the $x$--gradient deviations (mismatches) in unified $x$-direction notation:
\[
D_y:=\nabla_x \mathcal L_\lambda(x,y)-\nabla_x \mathcal L_\lambda\bigl(x,y_\lambda^{*}(x)\bigr),
\qquad
D_z:=\nabla_x g(x,z)-\nabla g^{*}(x).
\]
(These are exactly the previous $D_y$ and $D_z$, but we rename them to emphasize they are \emph{$x$-gradient} deviations.)

\textbf{[S2: deviation-to-residual conversion.]}
\smallskip
\noindent For all $(x,y,z)$ with $W(x,y,z)\le W_0$, there exists $C_y>0$ such that 
\begin{equation}\label{eq:Dy-bound}
\|D_y\|
=\Bigl\|\nabla_x \mathcal L_\lambda(x,y)-\nabla_x \mathcal L_\lambda\bigl(x,y_\lambda^{*}(x)\bigr)\Bigr\|
\ \le\ C_y\,\|\nabla_y \mathcal L_\lambda(x,y)\|
= C_y\,\|r_y\|.
\end{equation}

\smallskip
\noindent Since 
\[
\nabla g^{*}(x)=\nabla_x g\bigl(x,y^{*}(x)\bigr),
\]
\[
D_z
=\nabla_x g(x,z)-\nabla g^{*}(x)
=\nabla_x g(x,z)-\nabla_x g\bigl(x,y^{*}(x)\bigr).
\]
By Assumption~\ref{ass:2}.3,
\[
\|D_z\|
=\bigl\|\nabla_x g(x,z)-\nabla_x g\bigl(x,y^{*}(x)\bigr)\bigr\|
\ \le\ L_{gx}\,\|z-y^{*}(x)\|.
\]
Finally, by the strong-convexity,
\[
\|z-y^{*}(x)\|\ \le\ \frac{1}{\mu}\,\|\nabla_y g(x,z)\|
=\frac{1}{\mu}\|r_z\|.
\]
Combining the last two displays yields
\begin{equation}\label{eq:Dz-bound}
\|D_z\|\ \le\ \frac{L_{gx}}{\mu}\,\|r_z\|.
\end{equation}

\textbf{[S3: absorption via time-scale separation.]}
\noindent Recall
\[
E_{\lambda}(x,y,z)=f(x,y)+\lambda\bigl(g(x,y)-g(x,z)\bigr).
\]
Differentiating with respect to each variable yields
\[
\nabla_x E_\lambda(x,y,z)=\nabla_x f(x,y)+\lambda\bigl(\nabla_x g(x,y)-\nabla_x g(x,z)\bigr),
\]
\[
\nabla_y E_\lambda(x,y,z)=\nabla_y f(x,y)+\lambda\nabla_y g(x,y),
\qquad
\nabla_z E_\lambda(x,y,z)=-\lambda\nabla_y g(x,z).
\]
Using $r_y=\nabla_y f(x,y)+\lambda\nabla_y g(x,y)$ and $r_z=\nabla_y g(x,z)$, we obtain
\[
\nabla_y E_\lambda(x,y,z)=r_y,
\qquad
\nabla_z E_\lambda(x,y,z)=-\lambda r_z.
\]

\medskip
\noindent Since $D_z:=\nabla_x g(x,z)-\nabla g^{*}(x)$, we have
\[
\nabla_x g(x,z)=\nabla g^{*}(x)+D_z.
\]
Substituting this into $\nabla_x E_\lambda$ gives
\begin{align*}
\nabla_x E_\lambda(x,y,z)
&=\nabla_x f(x,y)+\lambda\Bigl(\nabla_x g(x,y)-\nabla_x g(x,z)\Bigr)\\
&=\nabla_x f(x,y)+\lambda\Bigl(\nabla_x g(x,y)-\nabla g^{*}(x)-D_z\Bigr)\\
&=\Bigl(\nabla_x f(x,y)+\lambda(\nabla_x g(x,y)-\nabla g^{*}(x))\Bigr)
-\lambda D_z\\
&=\nabla_x \mathcal L_\lambda(x,y)-\lambda D_z.
\end{align*}
Hence 
\[
\dot x=-\nabla_x E_\lambda(x,y,z)
=-\nabla_x \mathcal L_\lambda(x,y)+\lambda D_z.
\]
Therefore,
\begin{equation}\label{eq:xdot}
\dot x=-A-D_y+\lambda D_z.
\end{equation}

\medskip
\noindent Since 
\[
\dot{\mathcal L}_\lambda^{*}(x(t))=\big\langle\nabla_x\mathcal L_\lambda^{*}(x),\dot x\big\rangle
=\langle A,\dot x\rangle.
\]
Substituting \eqref{eq:xdot},
\begin{align}
\dot{\mathcal L}_\lambda^{*}
&=\langle A,-A-D_y+\lambda D_z\rangle
\nonumber\\
&=-\langle A,A\rangle-\langle A,D_y\rangle+\lambda\langle A,D_z\rangle
\nonumber\\
&=-\|A\|^{2}-\langle A,D_y\rangle+\lambda\langle A,D_z\rangle.
\label{eq:Lstar-dot}
\end{align}

\begin{equation}\label{eq:Lxy-chain}
\frac{d}{dt}\mathcal L_\lambda(x,y)
=\big\langle\nabla_x \mathcal L_\lambda(x,y),\dot x\big\rangle
+\big\langle\nabla_y \mathcal L_\lambda(x,y),\dot y\big\rangle
=\langle A+D_y,\dot x\rangle+\langle r_y,\dot y\rangle.
\end{equation}
Substituting \eqref{eq:xdot} into \eqref{eq:Lxy-chain} gives
\begin{align}
\frac{d}{dt}\mathcal L_\lambda(x,y)
&=\langle A+D_y,-A-D_y+\lambda D_z\rangle-\delta\|r_y\|^{2}
\nonumber\\
&=-\langle A+D_y,A+D_y\rangle+\lambda\langle A+D_y,D_z\rangle-\delta\|r_y\|^{2}
\nonumber\\
&=-\|A+D_y\|^{2}+\lambda\langle A+D_y,D_z\rangle-\delta\|r_y\|^{2}.
\label{eq:Lxy-dot}
\end{align}

\medskip
\noindent For $H_z(x,z)=g(x,z)-g^{*}(x)$, we have
\[
\nabla_x H_z(x,z)=\nabla_x g(x,z)-\nabla g^{*}(x)=D_z,
\qquad
\nabla_z H_z(x,z)=\nabla_y g(x,z)=r_z,
\]
so the chain rule yields
\begin{equation}\label{eq:Hdot-chain}
\dot H_z(x,z)=\langle D_z,\dot x\rangle+\langle r_z,\dot z\rangle.
\end{equation}
\[
\langle D_z,\dot x\rangle
=\langle D_z,-A-D_y+\lambda D_z\rangle
=-\langle D_z,A\rangle-\langle D_z,D_y\rangle+\lambda\|D_z\|^{2},
\]
\[
\langle r_z,\dot z\rangle=\langle r_z,-\eta\lambda r_z\rangle=-\eta\lambda\|r_z\|^{2}.
\]
Substituting into \eqref{eq:Hdot-chain} gives
\begin{equation}\label{eq:Hdot-z}
\dot H_z(x,z)
=-\langle D_z,A\rangle-\langle D_z,D_y\rangle+\lambda\|D_z\|^{2}-\eta\lambda\|r_z\|^{2}.
\end{equation}

\medskip
\noindent Recall that
\[
W(x,y,z)=(1-\alpha)\mathcal L_\lambda^{*}(x)+\alpha \mathcal L_\lambda(x,y)+\beta\lambda H_z(x,z).
\]
\noindent Now we have
\begin{align}
\dot W
&=(1-\alpha)\dot{\mathcal L}_\lambda^{*}+\alpha\frac{d}{dt}\mathcal L_\lambda(x,y)+\beta\lambda \dot H_z(x,z)
\nonumber\\
&=(1-\alpha)\Bigl(-\|A\|^{2}-\langle A,D_y\rangle+\lambda\langle A,D_z\rangle\Bigr)
+\alpha\Bigl(-\|A+D_y\|^{2}+\lambda\langle A+D_y,D_z\rangle-\delta\|r_y\|^{2}\Bigr)
\nonumber\\
&\quad+\beta\lambda\Bigl(-\langle D_z,A\rangle-\langle D_z,D_y\rangle+\lambda\|D_z\|^{2}-\eta\lambda\|r_z\|^{2}\Bigr).
\label{eq:Wdot-expand}
\end{align}
Collecting like terms in \eqref{eq:Wdot-expand} gives
\begin{align}
\dot W
&= -(1-\alpha)\|A\|^{2}
   -(1-\alpha)\langle A,D_y\rangle
   -\alpha\|A+D_y\|^{2}
\nonumber\\
&\quad +\lambda(1-\beta)\langle A,D_z\rangle
       +\lambda(\alpha-\beta)\langle D_y,D_z\rangle
       -\alpha\delta\|r_y\|^{2}
       +\beta\lambda^{2}\|D_z\|^{2}
       -\beta\eta\lambda^{2}\|r_z\|^{2}.
\label{eq:Wdot-raw}
\end{align}

\noindent Fix $\varepsilon_1,\varepsilon_2>0$ and $\theta>0$. By Cauchy--Schwarz and Young's inequality,
\[
-(1-\alpha)\langle A,D_y\rangle
\le (1-\alpha)\Bigl(\frac{\varepsilon_1}{2}\|A\|^{2}+\frac{1}{2\varepsilon_1}\|D_y\|^{2}\Bigr),
\]
\[
\lambda(1-\beta)\langle A,D_z\rangle
\le \lambda(1-\beta)\Bigl(\frac{\varepsilon_2}{2}\|A\|^{2}+\frac{1}{2\varepsilon_2}\|D_z\|^{2}\Bigr),
\]
and
\[
\lambda(\alpha-\beta)\langle D_y,D_z\rangle
\le
\lambda|\alpha-\beta|\,\|D_y\|\,\|D_z\|
\le
\frac{\theta|\alpha-\beta|}{2}\|D_y\|^2
+
\frac{\lambda^2|\alpha-\beta|}{2\theta}\|D_z\|^2.
\]
Moreover, since $-\alpha\|A+D_y\|^2\le 0$, we drop it from the upper bound. 
\begin{equation}\label{eq:Wdot-mid-bilevel}
\dot W
\le -c_1\|A\|^{2}
     +K_y\|D_y\|^{2}
     +K_z\|D_z\|^{2}
     -\alpha\delta\|r_y\|^{2}
     -\beta\eta\lambda^{2}\|r_z\|^{2},
\end{equation}
where
\begin{equation}\label{eq:c1-def}
c_1:= (1-\alpha)\Bigl(1-\frac{\varepsilon_1}{2}\Bigr)-\lambda(1-\beta)\frac{\varepsilon_2}{2},
\end{equation}
\begin{equation}\label{eq:KyKz-def}
K_y:=\frac{1-\alpha}{2\varepsilon_1}
+\frac{\theta|\alpha-\beta|}{2},
\qquad
K_z:=\frac{\lambda(1-\beta)}{2\varepsilon_2}
+\beta\lambda^{2}
+\frac{\lambda^2|\alpha-\beta|}{2\theta}.
\end{equation}

\smallskip
\noindent Choice of $\varepsilon_2$ to keep $c_1$ positive for large $\lambda$.
Fix any $\varepsilon_1\in(0,2)$ and choose
\begin{equation}\label{eq:eps2-choice}
\varepsilon_2:=\frac{(1-\alpha)\bigl(1-\varepsilon_1/2\bigr)}{\lambda(1-\beta)}.
\end{equation}
Then \eqref{eq:c1-def} becomes
\begin{equation}\label{eq:c1-pos}
c_1=\frac{1-\alpha}{2}\Bigl(1-\frac{\varepsilon_1}{2}\Bigr)\ >\ 0,
\end{equation}
and in particular $c_1$ is independent of $\lambda$ under \eqref{eq:eps2-choice}.
Moreover, with \eqref{eq:eps2-choice},
\begin{equation}\label{eq:Kz-expanded}
K_z
=\lambda^2\Biggl(
\beta
+\frac{(1-\beta)^2}{2(1-\alpha)\bigl(1-\varepsilon_1/2\bigr)}
+\frac{|\alpha-\beta|}{2\theta}
\Biggr).
\end{equation}

\medskip
\noindent By \eqref{eq:Dy-bound}, for all $(x,y,z)$ in the sublevel set $\{W\le W_0\}$,
\[
\|D_y\|\le C_y\,\|r_y\|,
\]
where we emphasize that $C_y=C_y(\lambda)$ may depend on $\lambda$.
Next, by \eqref{eq:Dz-bound}, define
\[
C_z:=\frac{L_{gx}}{\mu},
\qquad\text{so that}\quad
\|D_z\|^2\le C_z^2\|r_z\|^2.
\]
Substituting these bounds into \eqref{eq:Wdot-mid-bilevel} yields
\begin{equation}\label{eq:Wdot-mid-absorb}
\dot W
\le -c_1\|A\|^{2}
   +\bigl(K_y C_y(\lambda)^{2}-\alpha\delta\bigr)\|r_y\|^{2}
   +\bigl(K_z C_z^{2}-\beta\eta\lambda^{2}\bigr)\|r_z\|^{2}.
\end{equation}

\smallskip
\noindent
Using \eqref{eq:KyKz-def} and \eqref{eq:Kz-expanded}, define
\begin{equation}\label{eq:delta0-explicit}
\delta_0(\lambda)
:=\frac{K_y\,C_y(\lambda)^2}{\alpha}
=
\frac{C_y(\lambda)^2}{\alpha}
\left(
\frac{1-\alpha}{2\varepsilon_1}
+
\frac{\theta|\alpha-\beta|}{2}
\right),
\end{equation}
and
\begin{align}
\eta_0
&:=\frac{K_z\,C_z^2}{\beta\lambda^2} \nonumber\\
&=
\frac{C_z^2}{\beta}
\left(
\beta
+
\frac{(1-\beta)^2}{2(1-\alpha)\bigl(1-\varepsilon_1/2\bigr)}
+
\frac{|\alpha-\beta|}{2\theta}
\right).
\label{eq:eta0-explicit}
\end{align}

Then for any $\delta\ge\delta_0(\lambda)$ and $\eta\ge\eta_0$, define
\[
c_2:=\alpha\delta-K_y C_y(\lambda)^{2}>0,
\qquad
c_3:=\beta\eta\lambda^{2}-K_z C_z^{2}>0.
\]
With these choices,
\[
K_y C_y(\lambda)^{2}-\alpha\delta=-c_2,
\qquad
K_z C_z^{2}-\beta\eta\lambda^{2}=-c_3,
\]
and therefore \eqref{eq:Wdot-mid-absorb} implies
\begin{equation}\label{eq:Wdot-final}
\dot W
\le -c_1\|A\|^{2}-c_2\|r_y\|^{2}-c_3\|r_z\|^{2}.
\end{equation}

Integrating \eqref{eq:Wdot-final} over $[0,T]$ yields
\[
W(x(T),y(T),z(T)) - W(x(0),y(0),z(0))
\ \le\
-\,c_1\int_{0}^{T}\|\nabla_x\mathcal L_\lambda^{*}(x(t))\|^{2}\,dt.
\]
Dropping the nonpositive left-hand side term and using
$W(x(T),y(T),z(T))\ge \mathcal L_\lambda^{*}(x(T))\ge \mathcal L_{\lambda,\inf}^{*}$, we obtain
\[
c_1\int_{0}^{T}\|\nabla_x\mathcal L_\lambda^{*}(x(t))\|^{2}\,dt
\ \le\
W(x(0),y(0),z(0))-\mathcal L_{\lambda,\inf}^{*}.
\]
Letting $t\sim\mathrm{Unif}[0,T]$ gives
\[
\mathbb E_t\!\left[\|\nabla_x\mathcal L_\lambda^{*}(x(t))\|^{2}\right]
=
\frac{1}{T}\int_{0}^{T}\|\nabla_x\mathcal L_\lambda^{*}(x(t))\|^{2}\,dt
\ \le\
\frac{W(x(0),y(0),z(0))-\mathcal L_{\lambda,\inf}^{*}}{c_1\,T},
\]
which proves the theorem.
\end{proof}

\medskip 

\subsection{Proof of Lemma~\ref{lem:bridge}}\label{lem-bridgeeel}
\begin{proof}
Fix $x$ and abbreviate
$y^*:=y^*(x)$, $y_\lambda^*:=y_\lambda^*(x)$, and $\Delta:=y_\lambda^*-y^*$. Since $y^*$ is the unique minimizer of $g(x,\cdot)$, Danskin's theorem yields
$\nabla_x g^*(x)=\nabla_x g(x,y^*)$.
Moreover, by the envelope theorem applied to $\mathcal L_\lambda^{*}(x)=\min_y \mathcal L_\lambda(x,y)$,
\begin{equation}\label{eq:env-L}
\nabla_x \mathcal L_\lambda^{*}(x)
=
\nabla_x f(x,y_\lambda^*)
+\lambda\nabla_x g(x,y_\lambda^*)
-\lambda\nabla_x g(x,y^*).
\end{equation}
On the other hand, differentiating $F(x)=f(x,y^*(x))$ via the implicit-function formula
(see \eqref{eq:hypergrad}) gives
\begin{equation}\label{eq:env-F}
\nabla F(x)
=
\nabla_x f(x,y^*)
-
\nabla_{xy}^{2}g(x,y^*)\,\bigl(\nabla_{yy}^{2}g(x,y^*)\bigr)^{-1}\,\nabla_y f(x,y^*).
\end{equation}

The first-order optimality conditions for $y^*$ and $y_\lambda^*$ are
\[
\nabla_y g(x,y^*)=0,
\qquad
\nabla_y f(x,y_\lambda^*)+\lambda\nabla_y g(x,y_\lambda^*)=0.
\]
By strong convexity of $g(x,\cdot)$,
\[
\|\Delta\|
=\|y_\lambda^*-y^*\|
\ \le\ \frac{1}{\mu}\,\|\nabla_y g(x,y_\lambda^*)\|
=\frac{1}{\mu\lambda}\,\|\nabla_y f(x,y_\lambda^*)\|.
\]
\[
\|\nabla_y f(x,y_\lambda^*)\|
\le \|\nabla_y f(x,y^*)\| + L_{fy}\|\Delta\|.
\]
Combining the last two displays yields
\[
\|\Delta\|
\le
\frac{1}{\mu\lambda}\|\nabla_y f(x,y^*)\|
+\frac{L_{fy}}{\mu\lambda}\|\Delta\|.
\]
For any $\lambda\ge \bar\lambda=2L_{fy}/\mu$, we have $1-\frac{L_{fy}}{\mu\lambda}\ge \frac12$, and hence
\begin{equation}\label{eq:Delta-bound}
\|\Delta\|\ \le\ \frac{2}{\mu\lambda}\,\|\nabla_y f(x,y^*)\|.
\end{equation}
Moreover, the same estimates imply
\begin{equation}\label{eq:fy-at-ylam}
\|\nabla_y f(x,y_\lambda^*)\|
\le \|\nabla_y f(x,y^*)\| + L_{fy}\|\Delta\|
\le 2\,\|\nabla_y f(x,y^*)\|.
\end{equation}

We now rewrite the term
$\lambda\bigl(\nabla_x g(x,y_\lambda^{*})-\nabla_x g(x,y^{*})\bigr)$
in a form that matches the hyper-gradient structure
$\nabla_{xy}^{2}g(\nabla_{yy}^{2}g)^{-1}\nabla_y f$ appearing in \eqref{eq:env-F}.

\medskip
\noindent\emph{(i) Relating $\Delta$ to an inverse Hessian times $\nabla_y f$.}\\
By the fundamental theorem of calculus applied to the map $y\mapsto \nabla_y g(x,y)$ along the segment
$y^{*}\to y_\lambda^{*}=y^{*}+\Delta$, we have
\[
\nabla_y g(x,y_\lambda^{*})-\nabla_y g(x,y^{*})
=\int_{0}^{1} \nabla_{yy}^{2}g\bigl(x,y^{*}+s\Delta\bigr)\,\Delta\,ds.
\]
Since $y^{*}$ is the minimizer of $g(x,\cdot)$, $\nabla_y g(x,y^{*})=0$, and thus the left-hand side equals
$\nabla_y g(x,y_\lambda^{*})$.
Define the averaged Hessian
\[
\bar B\ :=\ \int_{0}^{1} \nabla_{yy}^{2}g\bigl(x,y^{*}+s\Delta\bigr)\,ds,
\qquad\text{so that}\qquad
\nabla_y g(x,y_\lambda^{*})=\bar B\,\Delta.
\]
By $\mu$-strong convexity of $g(x,\cdot)$, we have $\nabla_{yy}^{2}g(x,\cdot)\succeq \mu I$, hence
$\bar B\succeq \mu I$ and $\|\bar B^{-1}\|\le 1/\mu$.

On the other hand, the optimality condition defining $y_\lambda^{*}$ is
$\nabla_y f(x,y_\lambda^{*})+\lambda\nabla_y g(x,y_\lambda^{*})=0$, i.e.,
\[
\nabla_y g(x,y_\lambda^{*})=-\frac{1}{\lambda}\nabla_y f(x,y_\lambda^{*}).
\]
Combining this with $\nabla_y g(x,y_\lambda^{*})=\bar B\Delta$ yields the key identity
\[
\lambda\Delta\ =\ -\,\bar B^{-1}\nabla_y f(x,y_\lambda^{*}),
\]
which already has the desired ``inverse-Hessian times $\nabla_y f$'' form.

\medskip
\noindent\emph{(ii) Converting the $x$-gradient difference to a mixed-Hessian integral.}\\
Next, apply the fundamental theorem of calculus to the map $y\mapsto \nabla_x g(x,y)$:
\[
\nabla_x g(x,y_\lambda^{*})-\nabla_x g(x,y^{*})
=
\int_{0}^{1} \nabla_{xy}^{2}g\bigl(x,y^{*}+s\Delta\bigr)\,\Delta\,ds.
\]
Multiplying by $\lambda$ and substituting $\lambda\Delta=-\bar B^{-1}\nabla_y f(x,y_\lambda^{*})$ gives
\begin{equation}\label{eq:gx-diff}
\lambda\bigl(\nabla_x g(x,y_\lambda^{*})-\nabla_x g(x,y^{*})\bigr)
=
-\int_{0}^{1} \nabla_{xy}^{2}g\bigl(x,y^{*}+s\Delta\bigr)\,\bar B^{-1}\,\nabla_y f(x,y_\lambda^{*})\,ds.
\end{equation}
This representation makes explicit that the penalty term is an averaged version of
$\nabla_{xy}^{2}g(\nabla_{yy}^{2}g)^{-1}\nabla_y f$, which is precisely the second term in the hyper-gradient
\eqref{eq:env-F}.

Combining \eqref{eq:env-L}, \eqref{eq:env-F}, and \eqref{eq:gx-diff}, we can write
\begin{equation}\label{eq:bridge-decomp}
\begin{aligned}
\nabla_x \mathcal L_\lambda^{*}(x)-\nabla F(x)
&=
\underbrace{\bigl(\nabla_x f(x,y_\lambda^{*})-\nabla_x f(x,y^{*})\bigr)}_{(\mathrm{I})}\\
&\quad+\underbrace{\Biggl(
\nabla_{xy}^{2}g(x,y^{*})\bigl(\nabla_{yy}^{2}g(x,y^{*})\bigr)^{-1}\nabla_y f(x,y^{*})
-\int_{0}^{1}\nabla_{xy}^{2}g(x,y^{*}+s\Delta)\,\bar B^{-1}\,\nabla_y f(x,y_\lambda^{*})\,ds
\Biggr)}_{(\mathrm{II})}.
\end{aligned}
\end{equation}

For term~($\mathrm{I}$), Assumptions~\ref{ass:2}.2 gives $\|(\mathrm{I})\|\le L_{fx}\|\Delta\|$.

For term~($\mathrm{II}$), add and subtract
$\nabla_{xy}^{2}g(x,y^*)\bar B^{-1}\nabla_y f(x,y_\lambda^*)$ and use the triangle inequality:
\begin{align*}
\|(\mathrm{II})\|
&\le
\underbrace{\left\|
\nabla_{xy}^{2}g(x,y^*)\bigl(\nabla_{yy}^{2}g(x,y^*)\bigr)^{-1}
\bigl(\nabla_y f(x,y^*)-\nabla_y f(x,y_\lambda^*)\bigr)
\right\|}_{(\mathrm{II}\text{-}a)}\\
&\quad+
\underbrace{\left\|
\nabla_{xy}^{2}g(x,y^*)\Bigl(\bigl(\nabla_{yy}^{2}g(x,y^*)\bigr)^{-1}-\bar B^{-1}\Bigr)\nabla_y f(x,y_\lambda^*)
\right\|}_{(\mathrm{II}\text{-}b)}\\
&\quad+
\underbrace{\left\|
\int_{0}^{1}\bigl(\nabla_{xy}^{2}g(x,y^*)-\nabla_{xy}^{2}g(x,y^*+s\Delta)\bigr)\bar B^{-1}\nabla_y f(x,y_\lambda^*)\,ds
\right\|}_{(\mathrm{II}\text{-}c)}.
\end{align*}

($\mathrm{II}$-a) Using $\|\nabla_{xy}^{2}g(x,y^*)\|\le M_{gxy}$, $\|(\nabla_{yy}^{2}g(x,y^*))^{-1}\|\le 1/\mu$,
\[
\|(\mathrm{II}\text{-}a)\|
\le
\frac{M_{gxy}}{\mu}\,L_{fy}\,\|\Delta\|.
\]

($\mathrm{II}$-b)  First, $\|\nabla_{xy}^{2}g(x,y^*)\|\le M_{gxy}$ and \eqref{eq:fy-at-ylam} gives
$\|\nabla_y f(x,y_\lambda^*)\|\le 2\|\nabla_y f(x,y^*)\|$.
Next, since $\bar B\succeq \mu I$, we have $\|\bar B^{-1}\|\le 1/\mu$, and hence
\[
\Bigl\|(\nabla_{yy}^{2}g(x,y^*))^{-1}-\bar B^{-1}\Bigr\|
\le
\|(\nabla_{yy}^{2}g(x,y^*))^{-1}\|\,\|\nabla_{yy}^{2}g(x,y^*)-\bar B\|\,\|\bar B^{-1}\|
\le
\frac{1}{\mu^2}\,\|\nabla_{yy}^{2}g(x,y^*)-\bar B\|.
\]
\[
\|\nabla_{yy}^{2}g(x,y^*)-\bar B\|
=
\left\|\int_{0}^{1}\bigl(\nabla_{yy}^{2}g(x,y^*)-\nabla_{yy}^{2}g(x,y^*+s\Delta)\bigr)\,ds\right\|
\le
\int_{0}^{1} L_{gyy}\,s\|\Delta\|\,ds
=
\frac{L_{gyy}}{2}\|\Delta\|.
\]
Therefore,
\[
\|(\mathrm{II}\text{-}b)\|
\le
M_{gxy}\cdot \frac{L_{gyy}}{2\mu^2}\|\Delta\|\cdot 2\|\nabla_y f(x,y^*)\|
=
\frac{M_{gxy}L_{gyy}}{\mu^2}\,\|\nabla_y f(x,y^*)\|\,\|\Delta\|.
\]

($\mathrm{II}$-c) $\|\bar B^{-1}\|\le 1/\mu$, and \eqref{eq:fy-at-ylam},
\[
\|(\mathrm{II}\text{-}c)\|
\le
\int_{0}^{1} L_{gxy}\,s\|\Delta\|\,ds\cdot \frac{1}{\mu}\cdot 2\|\nabla_y f(x,y^*)\|
=
\frac{L_{gxy}}{\mu}\,\|\nabla_y f(x,y^*)\|\,\|\Delta\|.
\]

Combining these bounds yields
\[
\|\nabla_x \mathcal L_\lambda^{*}(x)-\nabla F(x)\|
\le
\Biggl(
L_{fx}
+\frac{M_{gxy}L_{fy}}{\mu}
+\Bigl(\frac{L_{gxy}}{\mu}+\frac{M_{gxy}L_{gyy}}{\mu^2}\Bigr)\|\nabla_y f(x,y^*)\|
\Biggr)\,\|\Delta\|.
\]
Finally, applying \eqref{eq:Delta-bound} gives
\[
\|\nabla_x \mathcal L_\lambda^{*}(x)-\nabla F(x)\|
\le
\frac{1}{\lambda}\left[
\frac{2}{\mu}\Bigl(L_{fx}+\frac{M_{gxy}L_{fy}}{\mu}\Bigr)\|\nabla_y f(x,y^*)\|
+\frac{2}{\mu^2}\Bigl(L_{gxy}+\frac{M_{gxy}L_{gyy}}{\mu}\Bigr)\|\nabla_y f(x,y^*)\|^2
\right].
\]
\end{proof} 

\subsection{Proof of Corollary~\ref{cor:bilevel-eps}}\label{cor:bilevel-epsss}
\begin{proof}
By Lemma~\ref{lem:bridge} and \eqref{eq:Gstar-assump}, for all $x$ in the region of interest we have
$\|\nabla_x \mathcal L_\lambda^{*}(x)-\nabla F(x)\|\le C_{\mathrm{br}}/\lambda$, hence
\[
\|\nabla F(x)\|\ \le\ \|\nabla_x \mathcal L_\lambda^{*}(x)\|+\frac{C_{\mathrm{br}}}{\lambda}.
\]
Using $(a+b)^2\le 2a^2+2b^2$ yields
\begin{equation}\label{eq:bridge-square}
\|\nabla F(x)\|^2
\ \le\
2\|\nabla_x \mathcal L_\lambda^{*}(x)\|^2
+2\Bigl(\frac{C_{\mathrm{br}}}{\lambda}\Bigr)^2.
\end{equation}
Taking expectation over $t\sim\mathrm{Unif}[0,T]$ gives
\begin{equation}\label{eq:bridge-square-erg}
\mathbb E_t\bigl[\|\nabla F(x(t))\|^2\bigr]
\ \le\
2\,\mathbb E_t\bigl[\|\nabla_x \mathcal L_\lambda^{*}(x(t))\|^2\bigr]
+2\Bigl(\frac{C_{\mathrm{br}}}{\lambda}\Bigr)^2.
\end{equation}

By the choice of $\lambda$, we have $\lambda\ge 2C_{\mathrm{br}}/\sqrt{\varepsilon}$, hence
\[
2\Bigl(\frac{C_{\mathrm{br}}}{\lambda}\Bigr)^2\ \le\ \frac{\varepsilon}{2}.
\]
Moreover, by Theorem~\ref{thm:fixed-lambda-simple},
\[
\mathbb E_t\bigl[\|\nabla_x \mathcal L_\lambda^{*}(x(t))\|^2\bigr]
\ \le\ \frac{W(x(0),y(0),z(0))-\mathcal L_{\lambda,\inf}^{*}}{c_1T}.
\]
Under the condition
\[
T\ \ge\ \frac{4\bigl(W(x(0),y(0),z(0))-\mathcal L_{\lambda,\inf}^{*}\bigr)}{c_1\,\varepsilon},
\]
we obtain
\[
2\,\mathbb E_t\bigl[\|\nabla_x \mathcal L_\lambda^{*}(x(t))\|^2\bigr]\ \le\ \frac{\varepsilon}{2}.
\]
Substituting the last two displays into \eqref{eq:bridge-square-erg} yields
$\mathbb E_t\bigl[\|\nabla F(x(t))\|^2\bigr]\le \varepsilon$.
\end{proof}

\subsection{Deriving Corollary~\ref{cor:bilevel-tracking} from an Intermediate Lemma~\ref{lem:bilevel-residuals}}
\begin{lemma}\label{lem:bilevel-residuals}
Assume the conditions of Theorem~\ref{thm:fixed-lambda-simple}.
Recall the residuals along the trajectory $(x(t),y(t),z(t))$ of \eqref{eq:flow-bilevel-unified}:
\[
r_y(t):=\nabla_y \mathcal L_\lambda(x(t),y(t))
=\nabla_y f(x(t),y(t))+\lambda\nabla_y g(x(t),y(t)),
\qquad
r_z(t):=\nabla_y g(x(t),z(t)).
\]
Let $\delta_0(\lambda)$ and $\eta_0$ be defined in \eqref{eq:eta0-bilevel}.
Suppose that
\[
\delta>\delta_0(\lambda),
\qquad
\eta>\eta_0,
\]
and define the positive margins
\[
c_y(\delta,\lambda):=\alpha\bigl(\delta-\delta_0(\lambda)\bigr),
\qquad
c_z(\eta,\lambda):=\beta\lambda^2\bigl(\eta-\eta_0\bigr).
\]
Then for $t\sim\mathrm{Unif}[0,T]$,
\begin{align}
\mathbb E_t\bigl[\|r_y(t)\|^2\bigr]
&\le
\frac{W(x(0),y(0),z(0))-\mathcal L_{\lambda,\inf}^{*}}{c_y(\delta,\lambda)\,T},
\label{eq:ergodic-F-lemma}\\
\mathbb E_t\bigl[\|r_z(t)\|^2\bigr]
&\le
\frac{W(x(0),y(0),z(0))-\mathcal L_{\lambda,\inf}^{*}}{c_z(\eta,\lambda)\,T}.
\label{eq:ergodic-Gz-lemma}
\end{align}
\end{lemma}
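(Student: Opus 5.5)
We would reuse the dissipation inequality \eqref{eq:Wdot-mid-absorb} from the proof of Theorem~\ref{thm:fixed-lambda-simple}, now keeping the residual terms instead of discarding them. That inequality reads
\[
\dot W\le -c_1\|A\|^2+\bigl(K_yC_y(\lambda)^2-\alpha\delta\bigr)\|r_y\|^2+\bigl(K_zC_z^2-\beta\eta\lambda^2\bigr)\|r_z\|^2 .
\]
By the definitions \eqref{eq:delta0-explicit} and \eqref{eq:eta0-explicit} we have $K_yC_y(\lambda)^2=\alpha\delta_0(\lambda)$ and $K_zC_z^2=\beta\lambda^2\eta_0$. The two residual coefficients are therefore exactly $-\alpha(\delta-\delta_0(\lambda))=-c_y(\delta,\lambda)$ and $-\beta\lambda^2(\eta-\eta_0)=-c_z(\eta,\lambda)$. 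Both are strictly negative under the strict separation $\delta>\delta_0(\lambda)$ and $\eta>\eta_0$.

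This gives, for a.e. $t$,
\[
\dot W(t)\le -c_1\|\nabla_x\mathcal L_\lambda^*(x(t))\|^2-c_y(\delta,\lambda)\|r_y(t)\|^2-c_z(\eta,\lambda)\|r_z(t)\|^2 .
\]
We would integrate this over $[0,T]$ and use the lower bound $W\ge \mathcal L_\lambda^*(x)\ge\mathcal L^*_{\lambda,\inf}$, which follows from $H_y,H_z\ge0$ and $\alpha,\beta,\lambda>0$. The result is
\[
c_1\int_0^T\|A\|^2\,dt+c_y\int_0^T\|r_y\|^2\,dt+c_z\int_0^T\|r_z\|^2\,dt\le W(x(0),y(0),z(0))-\mathcal L^*_{\lambda,\inf}.
\]
All three integrands are nonnegative, so we may drop any two of the terms. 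Dividing by $c_yT$ (respectively $c_zT$) and identifying $\frac1T\int_0^T(\cdot)\,dt$ with $\mathbb E_t[\cdot]$ for $t\sim\mathrm{Unif}[0,T]$ yields \eqref{eq:ergodic-F-lemma} and \eqref{eq:ergodic-Gz-lemma}. This is the same argument as Lemma~\ref{lem:minimax-inner-residual} in the minimax case.

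The only delicate point is consistency of constants. The thresholds in \eqref{eq:eta0-bilevel} correspond to the choice $\varepsilon_1=1$ together with $\varepsilon_2$ as in \eqref{eq:eps2-choice}, so we must instantiate \eqref{eq:delta0-explicit} and \eqref{eq:eta0-explicit} with these same parameters. Only then do the margins come out exactly as $c_y$ and $c_z$.

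We also need the conversion \eqref{eq:Dy-bound}, which holds on the sublevel set $\{W\le W_0\}$. We would check that the trajectory remains in this set. Since $\dot W\le0$, we have $W(t)\le W(0)=W_0$ for all $t\ge 0$, which is the self-consistency remark made after Lemma~\ref{lem:deviation}. Apart from this bookkeeping the argument is routine.
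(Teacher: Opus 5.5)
Your proposal is correct and follows the paper's proof essentially verbatim. You identify the residual coefficients in \eqref{eq:Wdot-mid-absorb} as $-c_y(\delta,\lambda)$ and $-c_z(\eta,\lambda)$ via $\delta_0(\lambda)=K_yC_y(\lambda)^2/\alpha$ and $\eta_0=K_zC_z^2/(\beta\lambda^2)$, integrate over $[0,T]$, use $W\ge\mathcal L^*_{\lambda,\inf}$, drop the other nonnegative terms, and divide by $T$; your remark that the main-text thresholds correspond to $\varepsilon_1=1$ with $\varepsilon_2$ from \eqref{eq:eps2-choice} is accurate bookkeeping that the paper leaves implicit.
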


\begin{proof} 
The proof of Theorem~\ref{thm:fixed-lambda-simple} establishes the inequality: there exist constants $c_1>0$, $c_2>0$, and $c_3>0$ such that for a.e.\ $t\ge 0$,
\begin{equation}\label{eq:lyap-refined}
\frac{d}{dt}W(x(t),y(t),z(t))
\le
-c_1\|\nabla_x \mathcal L_\lambda^*(x(t))\|^2
-c_2\|r_y(t)\|^2
-c_3\|r_z(t)\|^2.
\end{equation}
Moreover, with the explicit definitions in the proof,
\[
c_2=\alpha\delta-K_yC_y(\lambda)^2,
\qquad
c_3=\beta\eta\lambda^2-K_zC_z^2,
\]
and the thresholds $\delta_0(\lambda)=K_yC_y(\lambda)^2/\alpha$ and $\eta_0=K_zC_z^2/(\beta\lambda^2)$
are chosen precisely so that $c_2>0$ and $c_3>0$. Under the strict inequalities $\delta>\delta_0(\lambda)$ and $\eta>\eta_0$, we can rewrite
\[
c_2=\alpha\delta-\alpha\delta_0(\lambda)=\alpha(\delta-\delta_0(\lambda))=:c_y(\delta,\lambda)>0,
\]
\[
c_3=\beta\eta\lambda^2-\beta\eta_0\lambda^2=\beta\lambda^2(\eta-\eta_0)=:c_z(\eta,\lambda)>0.
\]
Substituting these into \eqref{eq:lyap-refined} yields
\begin{equation}\label{eq:lyap-refined-margins}
\frac{d}{dt}W(x(t),y(t),z(t))
\le
-c_1\|\nabla_x \mathcal L_\lambda^*(x(t))\|^2
-c_y(\delta,\lambda)\|r_y(t)\|^2
-c_z(\eta,\lambda)\|r_z(t)\|^2.
\end{equation}

Integrating \eqref{eq:lyap-refined-margins} over $[0,T]$ gives
\begin{align*}
W(x(T),y(T),z(T)) - W(x(0),y(0),z(0))
&\le
- c_1\int_0^T \|\nabla_x \mathcal L_\lambda^*(x(t))\|^2\,dt \\
&\quad
- c_y(\delta,\lambda)\int_0^T \|r_y(t)\|^2\,dt
- c_z(\eta,\lambda)\int_0^T \|r_z(t)\|^2\,dt.
\end{align*}
Dropping the nonpositive term involving $\|\nabla_x \mathcal L_\lambda^*(x(t))\|^2$ yields the simpler inequality
\begin{equation}\label{eq:integral-two-residuals}
c_y(\delta,\lambda)\int_0^T \|r_y(t)\|^2\,dt
+
c_z(\eta,\lambda)\int_0^T \|r_z(t)\|^2\,dt
\le
W(x(0),y(0),z(0)) - W(x(T),y(T),z(T)).
\end{equation}
By definition,
\[
W(x,y,z)
=
\mathcal L_\lambda^*(x)+\alpha\widetilde H_\lambda(x,y)+\beta\lambda H(x,z),
\]
and $\widetilde H_\lambda\ge 0$, $H\ge 0$. Hence $W(x,y,z)\ge \mathcal L_\lambda^*(x)$ for all $(x,y,z)$.
Since $\mathcal L_{\lambda,\inf}^*:=\inf_x \mathcal L_\lambda^*(x)>-\infty$ by Assumption~\ref{ass:2}.4, we have
\[
W(x(T),y(T),z(T))\ge \mathcal L_\lambda^*(x(T))\ge \mathcal L_{\lambda,\inf}^*.
\]
Combining with \eqref{eq:integral-two-residuals} yields the two separate bounds
\[
c_y(\delta,\lambda)\int_0^T \|r_y(t)\|^2\,dt
\le
W(x(0),y(0),z(0))-\mathcal L_{\lambda,\inf}^*,
\qquad
c_z(\eta,\lambda)\int_0^T \|r_z(t)\|^2\,dt
\le
W(x(0),y(0),z(0))-\mathcal L_{\lambda,\inf}^*.
\]
If $t\sim\mathrm{Unif}[0,T]$, then
\[
\mathbb E_t[\|r_y(t)\|^2]=\frac{1}{T}\int_0^T \|r_y(s)\|^2\,ds,
\qquad
\mathbb E_t[\|r_z(t)\|^2]=\frac{1}{T}\int_0^T \|r_z(s)\|^2\,ds.
\]
Dividing the last two inequalities by $T$ gives \eqref{eq:ergodic-F-lemma}--\eqref{eq:ergodic-Gz-lemma}.
\end{proof}

\subsubsection{Proof of Corollary~\ref{cor:bilevel-tracking}}

\begin{proof}
Fix any $t\in[0,T]$ and write $x=x(t)$, $y=y(t)$, and $y_\lambda^*=y_\lambda^*(x)$.
Since $y_\lambda^*$ minimizes $\mathcal L_\lambda(x,\cdot)$, we have the first-order optimality condition
\[
\nabla_y \mathcal L_\lambda(x,y_\lambda^*)=\mathbf 0.
\]
By assumption, $y\mapsto \phi_\lambda(x,y)=f(x,y)+\lambda g(x,y)$ is $\mu_\lambda$-strongly convex, and
$\nabla_y\mathcal L_\lambda(x,y)=\nabla_y\phi_\lambda(x,y)$ ($-\lambda g^*(x)$ does not depend on $y$).
Therefore $\nabla_y\mathcal L_\lambda(x,\cdot)$ is $\mu_\lambda$-strongly monotone, i.e.,
for all $y$,
\[
\big\langle \nabla_y\mathcal L_\lambda(x,y)-\nabla_y\mathcal L_\lambda(x,y_\lambda^*),\; y-y_\lambda^*\big\rangle
\ge
\mu_\lambda \|y-y_\lambda^*\|^2.
\]
Using $\nabla_y\mathcal L_\lambda(x,y_\lambda^*)=\mathbf 0$ and Cauchy--Schwarz,
\[
\mu_\lambda \|y-y_\lambda^*\|^2
\le
\big\langle \nabla_y\mathcal L_\lambda(x,y),\; y-y_\lambda^*\big\rangle
\le
\|\nabla_y\mathcal L_\lambda(x,y)\|\,\|y-y_\lambda^*\|.
\]
If $y\neq y_\lambda^*$, divide both sides by $\|y-y_\lambda^*\|$ to obtain
\[
\|y-y_\lambda^*\|
\le
\frac{1}{\mu_\lambda}\,\|\nabla_y\mathcal L_\lambda(x,y)\|.
\]
If $y=y_\lambda^*$, the inequality holds trivially. Hence, for all $t$,
\begin{equation}\label{eq:pointwise-ytrack}
\|y(t)-y_\lambda^*(x(t))\|
\le
\frac{1}{\mu_\lambda}\,\|\nabla_y\mathcal L_\lambda(x(t),y(t))\|
=
\frac{1}{\mu_\lambda}\,\|r_y(t)\|.
\end{equation}
Squaring \eqref{eq:pointwise-ytrack} yields
$\|y(t)-y_\lambda^*(x(t))\|^2 \le \mu_\lambda^{-2}\|r_y(t)\|^2$ pointwise.
Taking $\mathbb E_t[\cdot]$ on both sides and using \eqref{eq:ergodic-F-lemma} gives
\[
\mathbb E_t\bigl[\|y(t)-y_\lambda^*(x(t))\|^2\bigr]
\le
\frac{1}{\mu_\lambda^2}\,\mathbb E_t[\|r_y(t)\|^2]
\le
\frac{W(x(0),y(0),z(0))-\mathcal L_{\lambda,\inf}^{*}}{\mu_\lambda^2\,c_y(\delta,\lambda)\,T}.
\]
Fix any $t$ and write $x=x(t)$, $z=z(t)$, and $y^*=y^*(x)$, where $y^*(x)\in\arg\min_y g(x,y)$.
By $\mu$-strong convexity of $g(x,\cdot)$, we have $\nabla_y g(x,\cdot)$ is $\mu$-strongly monotone, hence
\[
\big\langle \nabla_y g(x,z)-\nabla_y g(x,y^*),\; z-y^*\big\rangle
\ge
\mu\|z-y^*\|^2.
\]
Using $\nabla_y g(x,y^*)=\mathbf 0$ and Cauchy--Schwarz as above yields the pointwise bound
\begin{equation}\label{eq:pointwise-ztrack}
\|z(t)-y^*(x(t))\|\le \frac{1}{\mu}\,\|\nabla_y g(x(t),z(t))\|=\frac{1}{\mu}\,\|r_z(t)\|.
\end{equation}
Squaring gives $\|z(t)-y^*(x(t))\|^2\le \mu^{-2}\|r_z(t)\|^2$.
Taking $\mathbb E_t[\cdot]$ and using \eqref{eq:ergodic-Gz-lemma} yields
\[
\mathbb E_t\bigl[\|z(t)-y^*(x(t))\|^2\bigr]
\le
\frac{1}{\mu^2}\,\mathbb E_t[\|r_z(t)\|^2]
\le
\frac{W(x(0),y(0),z(0))-\mathcal L_{\lambda,\inf}^{*}}{\mu^2\,c_z(\eta,\lambda)\,T}.
\]
\end{proof}

\begin{corollary}\label{cor:bilevel-track-ytrue}
Assume the conditions of Corollary~\ref{cor:bilevel-tracking}.
In addition, assume the regularity conditions used in Lemma~\ref{lem:bridge} and
$\lambda\ge \bar\lambda:=2L_{fy}/\mu$. Suppose there exists $G_*>0$ such that
\[
\|\nabla_y f(x,y^*(x))\|\le G_*
\quad\text{in the region of interest.}
\]
Then for $t\sim\mathrm{Unif}[0,T]$,
\begin{equation}\label{eq:y-to-ytrue}
\mathbb E_t\bigl[\|y(t)-y^*(x(t))\|^2\bigr]
\le
\frac{2\bigl(W(x(0),y(0),z(0))-\mathcal L_{\lambda,\inf}^{*}\bigr)}{\mu_\lambda^2\,c_y(\delta,\lambda)\,T}
+\frac{8G_*^2}{\mu^2\lambda^2}.
\end{equation}
\end{corollary}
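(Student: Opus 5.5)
The plan is a triangle-inequality split through the penalized minimizer $y_\lambda^*(x(t))$. Pointwise in $t$, write $x=x(t)$, $y=y(t)$, and use $(a+b)^2\le 2a^2+2b^2$ to get
\[
\|y-y^*(x)\|^2\ \le\ 2\|y-y_\lambda^*(x)\|^2+2\|y_\lambda^*(x)-y^*(x)\|^2 .
\]
The first term is the tracking error toward the penalized minimizer. The second term is a purely static, deterministic bias between the penalized and true lower-level solutions.

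For the first term, take $\mathbb E_t[\cdot]$ and invoke Corollary~\ref{cor:bilevel-tracking} directly. This gives
\[
2\,\mathbb E_t\|y(t)-y_\lambda^*(x(t))\|^2\le \frac{2\bigl(W(x(0),y(0),z(0))-\mathcal L_{\lambda,\inf}^*\bigr)}{\mu_\lambda^2 c_y(\delta,\lambda)T},
\]
which is exactly the first summand of \eqref{eq:y-to-ytrue}. This step requires the strict inequality $\delta>\delta_0(\lambda)$, so that $c_y>0$; that is part of the hypotheses inherited from Corollary~\ref{cor:bilevel-tracking}.

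For the second term, reuse the estimate \eqref{eq:Delta-bound} from the proof of Lemma~\ref{lem:bridge}. There $\Delta=y_\lambda^*(x)-y^*(x)$ was shown to satisfy $\|\Delta\|\le \frac{2}{\mu\lambda}\|\nabla_y f(x,y^*(x))\|$ whenever $\lambda\ge\bar\lambda=2L_{fy}/\mu$. That derivation used only three ingredients:
\begin{itemize}
\item $\mu$-strong convexity of $g(x,\cdot)$;
\item the optimality conditions $\nabla_y g(x,y^*)=0$ and $\nabla_y f(x,y_\lambda^*)+\lambda\nabla_y g(x,y_\lambda^*)=0$;
\item $L_{fy}$-Lipschitzness of $\nabla_y f$ from Assumption~\ref{ass:3}.
\end{itemize}
All of these are assumed here. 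Combining with $\|\nabla_y f(x,y^*(x))\|\le G_*$ along the trajectory (the region of interest) gives $\|\Delta\|^2\le 4G_*^2/(\mu^2\lambda^2)$ uniformly in $t$. Hence $2\,\mathbb E_t\|\Delta\|^2\le 8G_*^2/(\mu^2\lambda^2)$, which matches the second summand.

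The only point needing care is the applicability of \eqref{eq:Delta-bound} at every $x(t)$. Specifically, the absorption step $1-\frac{L_{fy}}{\mu\lambda}\ge\frac12$ requires $\lambda\ge\bar\lambda$, which is assumed. We also need $y_\lambda^*(x)$ to be well defined, which follows from $\lambda>\rho/\mu$ via the strong convexity of $\phi_\lambda$. Apart from this, the argument is just summing the two bounds, so I expect no real obstacle. The main thing to verify is that the constant $8=2\cdot 4$ comes out as stated.
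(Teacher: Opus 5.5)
Your proposal is correct and follows the paper's proof essentially step for step. Like the paper, you split through $y_\lambda^*(x(t))$ via $(a+b)^2\le 2a^2+2b^2$, bound the first term by Corollary~\ref{cor:bilevel-tracking}, and bound the penalty bias by \eqref{eq:Delta-bound} together with $\|\nabla_y f(x,y^*(x))\|\le G_*$, which gives $2\cdot 4G_*^2/(\mu^2\lambda^2)=8G_*^2/(\mu^2\lambda^2)$.
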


\begin{proof}
Fix $t$ and abbreviate $x=x(t)$, $y=y(t)$, $y^*=y^*(x)$, and $y_\lambda^*=y_\lambda^*(x)$.
By $(a+b)^2\le 2a^2+2b^2$,
\[
\|y-y^*\|^2
\le
2\|y-y_\lambda^*\|^2 + 2\|y_\lambda^*-y^*\|^2.
\]
Taking $\mathbb E_t[\cdot]$ and applying Corollary~\ref{cor:bilevel-tracking} gives
\[
\mathbb E_t[\|y(t)-y^*(x(t))\|^2]
\le
\frac{2\bigl(W(x(0),y(0),z(0))-\mathcal L_{\lambda,\inf}^{*}\bigr)}{\mu_\lambda^2\,c_y(\delta,\lambda)\,T}
+2\,\mathbb E_t[\|y_\lambda^*(x(t))-y^*(x(t))\|^2].
\]

It remains to bound the penalty bias $\|y_\lambda^*(x)-y^*(x)\|$.
From the derivation in Lemma~\ref{lem:bridge} (see \eqref{eq:Delta-bound}), for $\lambda\ge \bar\lambda$ we have
\[
\|y_\lambda^*(x)-y^*(x)\|
\le
\frac{2}{\mu\lambda}\,\|\nabla_y f(x,y^*(x))\|
\le
\frac{2G_*}{\mu\lambda}.
\]
Therefore $2\,\mathbb E_t[\|y_\lambda^*(x(t))-y^*(x(t))\|^2]\le 8G_*^2/(\mu^2\lambda^2)$.
Substituting this completes the proof.
\end{proof}

\section{Bilevel deviation bounds from Lemma~\ref{lem:deviation}}\label{app:deviation-bilevel}

This appendix derives the deviation-to-residual conversions used in the bilevel analysis, namely
\eqref{eq:DyDz-conv-main}. The key point is that both $x$--gradient mismatches can be controlled by
first-order inner residuals under cross-Lipschitz continuity and (weakly) strong convexity in the inner variables.

\begin{proposition}[Bilevel deviation bounds]\label{prop:bilevel-deviation}
Fix $\lambda>0$ and define
\[
\mathcal L_\lambda(x,y):=f(x,y)+\lambda\bigl(g(x,y)-g^*(x)\bigr),
\qquad
g^*(x):=\min_{u} g(x,u),
\]
and let $y_\lambda^*(x)\in\arg\min_y \mathcal L_\lambda(x,y)$.
Define the residual and deviation terms
\[
r_y:=\nabla_y \mathcal L_\lambda(x,y),
\qquad
D_y:=\nabla_x \mathcal L_\lambda(x,y)-\nabla_x \mathcal L_\lambda\bigl(x,y_\lambda^*(x)\bigr),
\]
and, for any $z$,
\[
r_z:=\nabla_y g(x,z),
\qquad
D_z:=\nabla_x g(x,z)-\nabla g^*(x).
\]
Assume Assumption~\ref{ass:2} holds. 

\smallskip
\noindent\textbf{(i) Bound for $D_z$.}
Then for all $(x,z)$,
\begin{equation}\label{eq:Dz-prop}
\|D_z\|
\ \le\
\frac{L_{gx}}{\mu}\,\|r_z\|
\ =:\ C_z\,\|r_z\|.
\end{equation}

\smallskip
\noindent\textbf{(ii) Bound for $D_y$.}
Let $\phi_\lambda(x,y):=f(x,y)+\lambda g(x,y)$ and assume that for every $x$,
the mapping $y\mapsto \phi_\lambda(x,y)$ is $\mu_\lambda$--strongly convex for some $\mu_\lambda>0$.
Then for all $(x,y)$,
\begin{equation}\label{eq:Dy-prop}
\|D_y\|
\ \le\
\frac{L_{fx}+\lambda L_{gx}}{\mu_\lambda}\,\|r_y\|
\ =:\ C_y(\lambda)\,\|r_y\|.
\end{equation}
In particular, if $y\mapsto f(x,y)$ is $\rho$--weakly convex and $y\mapsto g(x,y)$ is $\mu$--strongly convex,
then $\phi_\lambda(x,\cdot)$ is $(\lambda\mu-\rho)$--strongly convex, so \eqref{eq:Dy-prop} holds with
$\mu_\lambda=\lambda\mu-\rho>0$ whenever $\lambda>\rho/\mu$.
\end{proposition}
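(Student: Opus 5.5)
Both parts reduce to Lemma~\ref{lem:deviation}, applied to the right inner objective after removing terms that do not depend on the inner variable.

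\textbf{Part (i).} First I would establish the envelope identity $\nabla g^*(x)=\nabla_x g(x,y^*(x))$. This holds by Danskin's theorem: $g(x,\cdot)$ is $\mu$-strongly convex by Assumption~\ref{ass:2}.1, so the minimizer $y^*(x)$ is unique. Then $D_z=\nabla_x g(x,z)-\nabla_x g(x,y^*(x))$. Applying Lemma~\ref{lem:deviation} with $\psi=g$, $u=z$, $u^*(x)=y^*(x)$ and $L_{xu}=L_{gx}$ (Assumption~\ref{ass:2}.3) gives $\|D_z\|\le (L_{gx}/\mu)\|\nabla_y g(x,z)\|$, which is \eqref{eq:Dz-prop}.

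\textbf{Part (ii).} Write $\mathcal L_\lambda(x,y)=\phi_\lambda(x,y)-\lambda g^*(x)$. The term $\lambda\nabla g^*(x)$ does not depend on $y$, so it cancels in $D_y$, leaving $D_y=\nabla_x\phi_\lambda(x,y)-\nabla_x\phi_\lambda(x,y_\lambda^*(x))$. Likewise $r_y=\nabla_y\phi_\lambda(x,y)$. Since $\phi_\lambda(x,\cdot)$ is $\mu_\lambda$-strongly convex, $y_\lambda^*(x)$ is its unique minimizer, and it coincides with the minimizer of $\mathcal L_\lambda(x,\cdot)$. The cross-Lipschitz constant of $\nabla_x\phi_\lambda=\nabla_x f+\lambda\nabla_x g$ in $y$ is at most $L_{fx}+\lambda L_{gx}$, by Assumptions~\ref{ass:2}.2--\ref{ass:2}.3 and the triangle inequality. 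Lemma~\ref{lem:deviation} applied to $\psi=\phi_\lambda$ then yields \eqref{eq:Dy-prop}.

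\textbf{The ``in particular'' claim.} Add the weak-convexity inequality (Definition~\ref{def:weak-convex}) for $f(x,\cdot)$ to $\lambda$ times the strong-convexity inequality (Definition~\ref{def:SC}) for $g(x,\cdot)$. This gives
\[
\phi_\lambda(x,v)\ge \phi_\lambda(x,u)+\langle\nabla_y\phi_\lambda(x,u),v-u\rangle+\tfrac{\lambda\mu-\rho}{2}\|v-u\|^2,
\]
so $\phi_\lambda(x,\cdot)$ is $(\lambda\mu-\rho)$-strongly convex, with positive modulus whenever $\lambda>\rho/\mu$.

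\textbf{Main obstacle.} No step is deep. The only point needing care is the envelope identity for $g^*$, which must hold without assuming $y^*(\cdot)$ is differentiable. I would justify it via Danskin's theorem under uniqueness of the minimizer and continuity of $\nabla_x g$, rather than through the chain-rule computation used earlier in the paper.
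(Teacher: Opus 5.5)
Your proposal is correct and follows essentially the same route as the paper: both parts go through Lemma~\ref{lem:deviation}, applied to $\phi_\lambda$ after cancelling the $y$-independent term $\lambda g^*(x)$, and applied to $g$ after the Danskin envelope identity $\nabla g^*(x)=\nabla_x g(x,y^*(x))$. You also derive the $(\lambda\mu-\rho)$-strong convexity by summing the weak-convexity inequality for $f(x,\cdot)$ with $\lambda$ times the strong-convexity inequality for $g(x,\cdot)$; the paper's proof asserts this step without proving it.
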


\begin{proof}
We first prove \eqref{eq:Dy-prop}. Define $\phi_\lambda(x,y):=f(x,y)+\lambda g(x,y)$ so that
\[
\mathcal L_\lambda(x,y)=\phi_\lambda(x,y)-\lambda g^*(x).
\]
Since $g^*(x)$ is independent of $y$, we have
\[
\nabla_y \mathcal L_\lambda(x,y)=\nabla_y \phi_\lambda(x,y),
\qquad
\nabla_x \mathcal L_\lambda(x,y)-\nabla_x \mathcal L_\lambda(x,y')
=\nabla_x \phi_\lambda(x,y)-\nabla_x \phi_\lambda(x,y').
\]
By Assumptions~\ref{ass:2},
\[
\|\nabla_x \phi_\lambda(x,y)-\nabla_x \phi_\lambda(x,y')\|
\le
\|\nabla_x f(x,y)-\nabla_x f(x,y')\|+\lambda\|\nabla_x g(x,y)-\nabla_x g(x,y')\|
\le (L_{fx}+\lambda L_{gx})\|y-y'\|.
\]
Applying Lemma~\ref{lem:deviation} to $\psi=\phi_\lambda$ yields
\[
\bigl\|\nabla_x \phi_\lambda(x,y)-\nabla_x \phi_\lambda\bigl(x,y_\lambda^*(x)\bigr)\bigr\|
\le \frac{L_{fx}+\lambda L_{gx}}{\mu_\lambda}\,\bigl\|\nabla_y \phi_\lambda(x,y)\bigr\|.
\]
Using the identities above, this is exactly \eqref{eq:Dy-prop}.

We now prove \eqref{eq:Dz-prop}. Since $g(x,\cdot)$ is $\mu$--strongly convex, its minimizer
$y^*(x)\in\arg\min_u g(x,u)$ is unique and satisfies $\nabla_y g(x,y^*(x))=0$.
By Danskin's theorem, $\nabla g^*(x)=\nabla_x g(x,y^*(x))$.
Moreover, by Assumption~\ref{ass:2}.3, $\nabla_x g(x,\cdot)$ is $L_{gx}$--Lipschitz in $y$.
Applying Lemma~\ref{lem:deviation} to $\psi=g$ (with $u=z$ and $u^*=y^*(x)$) gives
\[
\|\nabla_x g(x,z)-\nabla_x g(x,y^*(x))\|
\le \frac{L_{gx}}{\mu}\,\|\nabla_y g(x,z)\|.
\]
Substituting $D_z=\nabla_x g(x,z)-\nabla g^*(x)=\nabla_x g(x,z)-\nabla_x g(x,y^*(x))$
and $r_z=\nabla_y g(x,z)$ yields \eqref{eq:Dz-prop}.
\end{proof}


\section{Relaxing inner strong convexity/concavity: error-bound instantiations for \ref{eq:P1}--\ref{eq:P31}}
\label{app:convex-inner}
This appendix complements Section~\ref{sec:mere} by instantiating the
error bound~(Assumption~\ref{assump:inner-EB}) in place of strong convexity/concavity repair for \ref{eq:P1}--\ref{eq:P31}.
Throughout, the unified Lyapunov construction $W=\Phi+\alpha H_y+\beta H_z$ is unchanged.
Only the deviation module is replaced: instead of strong convexity/concavity, we assume a gradient error bound (EB)
that yields a deviation-to-residual conversion of the form $\|D\|\le C\|r\|$ on a region of interest
(typically a Lyapunov sublevel set).

\paragraph{Why we focus on Assumption~\ref{assump:inner-EB}~(EB).}
The Lyapunov absorption step (Lemma~\ref{lem:timescale}) only requires the conversion
$\|D\|\le C\|r\|$.
Assumption~\ref{assump:inner-EB} is exactly an interface condition that delivers this conversion.
Conditions such as PL/gradient-dominance and quadratic growth (QG) are widely used sufficient mechanisms
to guarantee an EB on the relevant sublevel set.
Hence it is enough to develop the theory under EB, and treat PL/QG as standard instantiations that imply EB
under the usual smooth convex regularity.

\subsection{Notation: PL and QG, and their implication to EB}\label{app:pl-qg-eb}

Let $h:\mathbb R^{d}\to\mathbb R$ be differentiable and let
$h^*:=\inf_{u\in\mathbb R^d} h(u)$ and $U^*:=\arg\min_u h(u)$ (assumed nonempty).
We write $\operatorname{dist}(u,U^*):=\inf_{v\in U^*}\|u-v\|$.

\paragraph{Polyak--\L{}ojasiewicz (PL) inequality.}
We say that $h$ satisfies the PL inequality on a set $\mathcal U\subseteq\mathbb R^d$ with parameter $\mu_{\mathrm{PL}}>0$ if
\begin{equation}\label{eq:PL-def}
\frac{1}{2}\|\nabla h(u)\|^2 \ \ge\ \mu_{\mathrm{PL}}\bigl(h(u)-h^*\bigr)
\qquad(\forall\,u\in\mathcal U).
\end{equation}
Equivalently, $h(u)-h^* \le \frac{1}{2\mu_{\mathrm{PL}}}\|\nabla h(u)\|^2$ on $\mathcal U$.

\paragraph{Quadratic growth (QG).}
We say that $h$ satisfies QG on $\mathcal U$ with parameter $\mu_{\mathrm{QG}}>0$ if
\begin{equation}\label{eq:QG-def}
h(u)-h^* \ \ge\ \frac{\mu_{\mathrm{QG}}}{2}\,\operatorname{dist}(u,U^*)^2
\qquad(\forall\,u\in\mathcal U).
\end{equation}

\paragraph{EB as the interface condition.}
Assumption~\ref{assump:inner-EB} specializes here to the gradient error bound
\begin{equation}\label{eq:EB-def-app}
\operatorname{dist}(u,U^*) \ \le\ \kappa\,\|\nabla h(u)\|
\qquad(\forall\,u\in\mathcal U),
\end{equation}
for some $\kappa>0$.

\paragraph{QG + convexity $\Rightarrow$ EB.}
If $h$ is convex and differentiable, then for any $u\in\mathbb R^d$ and any $u^*\in U^*$,
convexity gives
$h(u)-h^* \le \langle \nabla h(u),u-u^*\rangle \le \|\nabla h(u)\|\operatorname{dist}(u,U^*)$.
Combining this with QG~\eqref{eq:QG-def} on $\mathcal U$ yields, for all $u\in\mathcal U$,
\[
\frac{\mu_{\mathrm{QG}}}{2}\operatorname{dist}(u,U^*)^2
\ \le\ h(u)-h^*
\ \le\ \|\nabla h(u)\|\operatorname{dist}(u,U^*),
\]
and therefore
\begin{equation}\label{eq:QG-implies-EB}
\operatorname{dist}(u,U^*)\ \le\ \frac{2}{\mu_{\mathrm{QG}}}\,\|\nabla h(u)\|
\qquad(\forall\,u\in\mathcal U).
\end{equation}
Thus QG (together with convexity) implies EB with $\kappa=2/\mu_{\mathrm{QG}}$.

\paragraph{PL + QG $\Rightarrow$ EB.}
Even without invoking convexity, if both PL~\eqref{eq:PL-def} and QG~\eqref{eq:QG-def} hold on $\mathcal U$, then
\[
\operatorname{dist}(u,U^*)^2 \ \le\ \frac{2}{\mu_{\mathrm{QG}}}\bigl(h(u)-h^*\bigr)
\ \le\ \frac{1}{\mu_{\mathrm{QG}}\mu_{\mathrm{PL}}}\|\nabla h(u)\|^2,
\]
hence EB holds with $\kappa = (\mu_{\mathrm{QG}}\mu_{\mathrm{PL}})^{-1/2}$.
In many smooth convex settings, PL implies QG on sublevel sets under standard regularity;
in such cases PL alone can be used as a convenient sufficient condition for EB via the intermediate QG step.
We emphasize, however, that for our Lyapunov template EB is the only required condition:
any route that establishes EB on the relevant region can be plugged in without altering the Lyapunov function. 

\subsection{A generic deviation-to-residual conversion under an EB}\label{app:eb-interface}

We return to the parametric setting $\psi(x,u)$, where $x\in\mathbb R^{d_x}$ and $u\in\mathbb R^{d_u}$.
Let $\mathcal R\subseteq\mathbb R^{d_x}\times\mathbb R^{d_u}$ be a region of interest.
Assume $\psi$ is differentiable and convex in $u$ and define
\[
U^*(x)\ :=\ \arg\min_u \psi(x,u),
\qquad
r_u(x,u)\ :=\ \nabla_u \psi(x,u).
\]
The $x$--gradient mismatch is the deviation
\[
D_u(x,u;u^*)\ :=\ \nabla_x\psi(x,u)-\nabla_x\psi(x,u^*),
\qquad u^*\in U^*(x).
\]

Assume that $\nabla_x\psi(x,\cdot)$ is $L_{xu}$--Lipschitz on $\mathcal R$, i.e.,
\[
\|\nabla_x\psi(x,u_1)-\nabla_x\psi(x,u_2)\| \ \le\ L_{xu}\|u_1-u_2\|
\qquad(\forall\, (x,u_1),(x,u_2)\in\mathcal R).
\]
If $\psi$ satisfies the EB condition (Assumption~\ref{assump:inner-EB}) on $\mathcal R$, then Lemma~\ref{lem:deviation-EB} gives
\begin{equation}\label{eq:C-from-EB}
\inf_{u^*\in U^*(x)}
\|D_u(x,u;u^*)\|
\ \le\ L_{xu}\kappa\,\|r_u(x,u)\|
\qquad(\forall\, (x,u)\in\mathcal R),
\end{equation}
so the deviation-to-residual conversion holds with the constant
\[
C\ =\ L_{xu}\kappa.
\]
Plugging \eqref{eq:C-from-EB} into Lemma~\ref{lem:timescale} preserves the algebraic form of the time-scale thresholds;
only the conversion constant changes, and the thresholds inherit the universal dependence on $C^2$. 

\subsection{Minimax without strong convexity/concavity}\label{app:eb-p1}

In \ref{eq:P1}, the inner maximization in $y$ corresponds to minimizing $\psi(x,y):=-f(x,y)$ in $y$.
The mismatch/residual pair in Table~\ref{tab:normal-form-dictionary_long} is
\[
D_y=\nabla_x f(x,y)-\nabla_x f\bigl(x,y^*(x)\bigr),
\qquad
r_y=\nabla_y f(x,y),
\]
with effective time scale $\tau_y=\gamma$.

Assume $f(x,\cdot)$ is concave and differentiable for each $x$, and that on the Lyapunov sublevel set
$\mathcal R_1:=\{(x,y): W(x,y)\le W(x(0),y(0))\}$ the following EB holds for the maximization problem:
there exists $\kappa_f>0$ such that for all $(x,y)\in\mathcal R_1$,
\begin{equation}\label{eq:EB-P1}
\operatorname{dist}\bigl(y, Y^*(x)\bigr)\ \le\ \kappa_f\,\|\nabla_y f(x,y)\|,
\qquad
Y^*(x):=\arg\max_y f(x,y).
\end{equation}
Assume in addition that $\nabla_x f(x,\cdot)$ is $L_{xy}$--Lipschitz in $y$ on $\mathcal R_1$.

Then, combining Lipschitzness with \eqref{eq:EB-P1} yields the deviation-to-residual conversion
\[
\inf_{y^*\in Y^*(x)}
\bigl\|\nabla_x f(x,y)-\nabla_x f(x,y^*)\bigr\|
\ \le\ L_{xy}\kappa_f\,\|\nabla_y f(x,y)\|
\qquad(\forall\,(x,y)\in\mathcal R_1),
\]
so in Lemma~\ref{lem:timescale} one may take $C_y=L_{xy}\kappa_f$.
Accordingly, the time-scale absorption condition retains the same form and becomes
\[
\gamma\ \ge\ \frac{K_y}{\alpha}\,C_y^2
\ =\ \frac{K_y}{\alpha}\,(L_{xy}\kappa_f)^2.
\]
Under strong concavity with parameter $\mu$, one can take $\kappa_f=1/\mu$ and recover the strongly concave threshold.
When the EB constant is larger (weaker geometry), the required $\gamma$ increases through the same $C^2$ dependence.

\subsection{Bilevel without strong convexity/concavity}\label{app:eb-p2}

In \ref{eq:problem2}, the analysis requires two conversion inequalities (Table~\ref{tab:normal-form-dictionary_long}):
\[
\|D_y\|\le C_y(\lambda)\|r_y\|,
\qquad
\|D_z\|\le C_z\|r_z\|,
\]
with effective time scales $(\tau_y,\tau_z)=(\delta,\eta\lambda^2)$.

Let $\phi_\lambda(x,y):=f(x,y)+\lambda g(x,y)$.
Assume that on the Lyapunov sublevel set $\mathcal R_2:=\{(x,y,z): W(x,y,z)\le W(x(0),y(0),z(0))\}$:
(i) $y\mapsto \phi_\lambda(x,y)$ is convex and differentiable and admits minimizers
$Y_\lambda^*(x):=\arg\min_y \phi_\lambda(x,y)\neq\emptyset$,
(ii) $y\mapsto g(x,y)$ is convex and differentiable and admits minimizers
$Z^*(x):=\arg\min_y g(x,y)\neq\emptyset$,
and (iii) both subproblems satisfy gradient error bounds:
there exist $\kappa_{\phi,\lambda},\kappa_g>0$ such that for all $(x,y,z)\in\mathcal R_2$,
\begin{equation}\label{eq:EB-P2-phi}
\operatorname{dist}\bigl(y, Y_\lambda^*(x)\bigr)\ \le\ \kappa_{\phi,\lambda}\,\|\nabla_y\phi_\lambda(x,y)\|,
\end{equation}
and
\begin{equation}\label{eq:EB-P2-g}
\operatorname{dist}\bigl(z, Z^*(x)\bigr)\ \le\ \kappa_{g}\,\|\nabla_y g(x,z)\|.
\end{equation}
Assume moreover that $\nabla_x f(x,\cdot)$ and $\nabla_x g(x,\cdot)$ are Lipschitz in $y$ on $\mathcal R_2$ with constants
$L_{fx}$ and $L_{gx}$, respectively.

Then the generic EB-to-deviation interface \eqref{eq:C-from-EB} yields the bilevel conversion constants
\[
C_y(\lambda)\ =\ (L_{fx}+\lambda L_{gx})\,\kappa_{\phi,\lambda},
\qquad
C_z\ =\ L_{gx}\,\kappa_g,
\]
so that \eqref{eq:DyDz-conv-main} holds on $\mathcal R_2$.
Consequently, the time-scale thresholds retain the same algebraic form as in the strongly convex analysis,
with the universal substitution $1/\mu\leadsto \kappa$ inside $C_y(\lambda),C_z$.
In particular, any threshold depending on $C_y(\lambda)^2$ or $C_z^2$ deteriorates proportionally to
$\kappa_{\phi,\lambda}^2$ and $\kappa_g^2$.

\subsection{Min--min--max without strong convexity/concavity}\label{app:eb-p3}

In \ref{eq:P31}, one similarly requires EB modules for both inner problems.
Let $\mathcal R_3:=\{(x,y,z): W(x,y,z)\le W(x(0),y(0),z(0))\}$.
Assume that on $\mathcal R_3$:
(i) $f(x,\cdot)$ and $g(x,\cdot)$ are convex and differentiable with nonempty minimizer sets
$Y^*(x):=\arg\min_y f(x,y)$ and $Z^*(x):=\arg\min_z g(x,z)$,
(ii) the gradient error bounds hold with constants $\kappa_f,\kappa_g>0$:
\[
\operatorname{dist}\bigl(y,Y^*(x)\bigr)\ \le\ \kappa_f\,\|\nabla_y f(x,y)\|,
\qquad
\operatorname{dist}\bigl(z,Z^*(x)\bigr)\ \le\ \kappa_g\,\|\nabla_z g(x,z)\|,
\]
and (iii) $\nabla_x f(x,\cdot)$ is $L_{fx}$--Lipschitz in $y$ and $\nabla_x g(x,\cdot)$ is $L_{gx}$--Lipschitz in $z$
on $\mathcal R_3$.

Then the conversion constants are
\[
C_y\ =\ L_{fx}\kappa_f,
\qquad
C_z\ =\ L_{gx}\kappa_g,
\]
and Lemma~\ref{lem:timescale} yields the same absorption conditions with problem-specific effective time scales
(e.g., $\tau_y=\delta$ and $\tau_z=\eta$ in \ref{eq:P31}). 
Consequently, the time-scale thresholds in \ref{eq:P31} take the explicit form
\[
\delta_0 \;=\; \frac{K_y}{\alpha}\,(L_{fx}\kappa_f)^2,
\qquad
\eta_0 \;=\; \frac{K_z}{\beta}\,(L_{gx}\kappa_g)^2.
\]
In the strongly convex case, one may take $\kappa_f=1/\mu_f$ and $\kappa_g=1/\mu_g$, recovering
$\delta_0=\frac{K_y}{\alpha}(L_{fx}/\mu_f)^2$ and $\eta_0=\frac{K_z}{\beta}(L_{gx}/\mu_g)^2$.
Thus weaker inner geometry (larger $\kappa_f,\kappa_g$) increases the required time-scale separation through the same $C^2$ dependence.

\section{Nonunique inner solutions via penalty envelopes}\label{app:nonunique-penalty}

\subsection{What fails beyond \ref{step:S1}--\ref{step:S2} under nonuniqueness}\label{app:nonunique-why}
In the strongly convex/concave setting, the optimizer maps are single-valued and the envelope identity
$\nabla \Phi(x)=\nabla_x\psi(x,u^*(x))$ is immediate, which pins down a unique reference gradient at the inner optimum.
When the inner problems are merely convex/concave, optimizer sets are typically multi-valued, and two additional issues arise:

\begin{itemize}
\item \textbf{(Envelope nonsmoothness / selection dependence).}
Even if $\psi(x,\cdot)$ is convex, the value function $\Phi(x)=\min_u \psi(x,u)$ or $\Phi(x)=\max_u\psi(x,u)$ can be nonsmooth.
Then \ref{step:S1} cannot be justified by uniqueness, and the descent term must be formulated either in a nonsmooth sense
(e.g., $\operatorname{dist}(0,\partial \Phi(x))$) or via a smooth surrogate.

\item \textbf{(Deviation ambiguity).}
If $U^*(x)$ is set-valued, an expression of the form
$D(x,u;u^*)=\nabla_x\psi(x,u)-\nabla_x\psi(x,u^*)$ depends on the choice of $u^*\in U^*(x)$.
This is not a separate difficulty if $\Phi$ is differentiable: differentiability forces the set of partial gradients at optimizers to be a singleton
(see Lemma~\ref{lem:danskin-singleton} below), hence the deviation becomes unambiguous.
\end{itemize}

Our goal in this appendix is to keep the unified Lyapunov construction unchanged and only swap the two steps
\ref{step:S1}--\ref{step:S2}. To do so in the nonunique regime, we work with a smooth penalty envelope surrogate
and invoke a standard local sensitivity condition (solution-set Lipschitz continuity + active-set stability) that implies
differentiability of the surrogate value function even when inner solutions are not unique.

\subsection{Penalty-envelope reformulation for \ref{eq:problem2}}\label{app:nonunique-setup}
We specialize to \ref{eq:problem2} and rewrite the penalized outer value in a form that matches the penalty envelope studied in~\cite{kwon2023penalty}.
Let $Y$ be compact and define, for $\sigma>0$,
\begin{equation}\label{eq:penalty-hsigma}
h_\sigma(x,y)\ :=\ g(x,y)+\sigma f(x,y),\qquad
\ell(x,\sigma)\ :=\ \min_{y\in Y} h_\sigma(x,y),\qquad
T(x,\sigma)\ :=\ \arg\min_{y\in Y} h_\sigma(x,y).
\end{equation}
Define the penalty-envelope value function
\begin{equation}\label{eq:psi-sigma-def}
\psi_\sigma(x)\ :=\ \frac{\ell(x,\sigma)-\ell(x,0)}{\sigma},
\qquad\text{where}\qquad \ell(x,0)=\min_{y\in Y}g(x,y)=:g^*(x).
\end{equation}

\begin{lemma}\label{lem:psi-equals-Llambda}
Let $\lambda:=1/\sigma$. Then
\begin{equation}\label{eq:psi-sigma-equals-penalty}
\psi_{1/\lambda}(x)
=\min_{y\in Y}\Bigl(f(x,y)+\lambda g(x,y)\Bigr)-\lambda g^*(x)
=\min_{y\in Y}\Bigl(f(x,y)+\lambda\bigl(g(x,y)-g^*(x)\bigr)\Bigr)
=:\mathcal L_\lambda^*(x),
\end{equation}
which is precisely the penalized outer value used in \ref{eq:problem2}.
\end{lemma}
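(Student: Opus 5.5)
This identity is purely algebraic. It rests on the positive homogeneity of the minimum under scaling by a positive constant and on the fact that $g^*(x)$ does not depend on $y$. I would unfold the definitions in \eqref{eq:penalty-hsigma}--\eqref{eq:psi-sigma-def} with $\sigma=1/\lambda$, where $\lambda>0$ so that $\sigma>0$ is admissible.

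\emph{Step 1.} Write
\[
\ell(x,1/\lambda)=\min_{y\in Y}\Bigl(g(x,y)+\tfrac1\lambda f(x,y)\Bigr)=\tfrac1\lambda\min_{y\in Y}\bigl(f(x,y)+\lambda g(x,y)\bigr).
\]
This uses $\min_y c\,h(y)=c\min_y h(y)$ for $c=1/\lambda>0$. The minimum is attained because $Y$ is compact and $f,g$ are continuous, so both sides are finite and the minimizer sets coincide.

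\emph{Step 2.} Substitute into $\psi_\sigma=(\ell(x,\sigma)-\ell(x,0))/\sigma$ and multiply through by $\lambda=1/\sigma$. This gives
\[
\psi_{1/\lambda}(x)=\min_{y\in Y}\bigl(f+\lambda g\bigr)-\lambda\,\ell(x,0).
\]
Then use $\ell(x,0)=\min_{y\in Y}g(x,y)=g^*(x)$, which yields the first equality.

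\emph{Step 3.} Since $\lambda g^*(x)$ is constant in $y$, move it inside the minimum: $\min_y(a(y))-c=\min_y(a(y)-c)$. This gives $\min_{y\in Y}\bigl(f+\lambda(g-g^*)\bigr)$, which is $\mathcal L_\lambda^*(x)$ by \eqref{eq:bilevel-value-f}.

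The only point needing care is consistency of domains. In \eqref{eq:bilevel-value-f}, $g^*$ and $\mathcal L_\lambda^*$ were defined with minima over $\mathbb R^{d_y}$, whereas here they are taken over $Y$. I would state explicitly that in this appendix both minima are read over the same set $Y$; under that convention the identity is exact. There is no genuine obstacle beyond confirming attainment and finiteness of the minima so that the arithmetic is valid.
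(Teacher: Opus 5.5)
Your proposal is correct and follows the paper's proof essentially verbatim: you factor $1/\lambda$ out of $\ell(x,1/\lambda)$, identify $\ell(x,0)=g^*(x)$, and substitute into the definition of $\psi_\sigma$. Your added remarks on attainment over the compact $Y$, and on reading $g^*$ and $\mathcal L_\lambda^*$ over the same set $Y$ rather than over $\mathbb R^{d_y}$ as in \eqref{eq:bilevel-value-f}, are sensible clarifications that the paper leaves implicit.
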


\begin{proof}
By $\sigma=1/\lambda$, we have $h_{1/\lambda}(x,y)=g(x,y)+\tfrac{1}{\lambda}f(x,y)=\tfrac{1}{\lambda}\bigl(f(x,y)+\lambda g(x,y)\bigr)$.
Thus $\ell(x,1/\lambda)=\tfrac{1}{\lambda}\min_{y\in Y}\bigl(f(x,y)+\lambda g(x,y)\bigr)$ and $\ell(x,0)=g^*(x)$.
Plugging into~\eqref{eq:psi-sigma-def} gives~\eqref{eq:psi-sigma-equals-penalty}.
\end{proof}

\paragraph{Scope of the appendix.}
All Lyapunov/ergodic statements below are for a fixed $\sigma>0$ (equivalently, a fixed penalty $\lambda=1/\sigma$),
i.e., stationarity for $\Phi=\psi_\sigma=\mathcal L_\lambda^*$.
Relating this surrogate stationarity to the limiting bilevel hyper-objective
$\psi(x):=\partial_\sigma \ell(x,\sigma)\vert_{\sigma=0+}$ requires additional compatibility at $\sigma=0+$ and is treated separately.

\subsection{A local sensitivity assumption}\label{app:nonunique-assumptions}
We now state assumptions in~\cite{kwon2023penalty} that imply differentiability of the penalty envelope
without assuming uniqueness of $T(x,\sigma)$.

\begin{assumption}\label{assump:SSC-nonunique}
Fix a region of interest $\mathcal X$ (e.g., the $x$-projection of a Lyapunov sublevel set).
Assume that for each $\sigma\in\{0,\bar\sigma\}$ (where $\bar\sigma:=1/\lambda$ is the fixed surrogate parameter),
the solution mapping $x\mapsto T(x,\sigma)$ is locally Lipschitz continuous on $\mathcal X$:
for every $x\in\mathcal X$ there exist $\delta>0$ and $L_T<\infty$ such that
\[
\operatorname{dist}\bigl(T(x_1,\sigma),T(x_2,\sigma)\bigr)\ \le\ L_T\,\|x_1-x_2\|
\qquad(\forall\,x_1,x_2\in B(x,\delta)\cap\mathcal X).
\]
Here $\operatorname{dist}(\cdot,\cdot)$ denotes a set-distance (e.g., Hausdorff distance), and $\operatorname{dist}(y,S):=\inf_{s\in S}\|y-s\|$ for point-to-set distance
and $B(x,\delta):=\{x'\in\mathbb R^{d_x}:\|x'-x\|<\delta\}$ denotes the (open) Euclidean ball.

\end{assumption}


Assumption~\ref{assump:SSC-nonunique} is local and does not require global uniqueness of $T(x,\sigma)$.
It imposes a Lipschitz stability of the solution set mapping with respect to $x$, which rules out abrupt set-valued
jumps of minimizers on the region of interest and, in turn, supports a smooth-envelope characterization of the surrogate
value function at fixed $\sigma$.

\subsection{Repairing \ref{step:S1}: smooth envelope without unique minimizers}\label{app:nonunique-S1}
We first provide a standard consequence of Danskin's theorem: differentiability implies a singleton set of optimizer-gradients.

\begin{lemma}\label{lem:danskin-singleton}
Let $U$ be compact and let $\psi:\mathbb{R}^{d_x}\times U\to\mathbb{R}$ be continuously differentiable in $x$ with $\nabla_x\psi$ continuous.
Define $\Phi(x):=\max_{u\in U}\psi(x,u)$ and $U^*(x):=\arg\max_{u\in U}\psi(x,u)$.
If $\Phi$ is differentiable at $x$, then
\[
\nabla \Phi(x)=\nabla_x\psi(x,u)\qquad(\forall\,u\in U^*(x)),
\]
and consequently the set $\{\nabla_x\psi(x,u):u\in U^*(x)\}$ is a singleton.
\end{lemma}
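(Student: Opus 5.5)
The plan is to prove the claim directly from one-sided directional derivatives, using Danskin's theorem. Fix $x$ at which $\Phi$ is differentiable, and fix any $u\in U^*(x)$ and any direction $d\in\mathbb R^{d_x}$. The main step is a lower bound on the difference quotient of $\Phi$ along $d$. For $t>0$, optimality of $u$ at $x$ and feasibility of $u$ at $x+td$ give
\[
\frac{\Phi(x+td)-\Phi(x)}{t}\ \ge\ \frac{\psi(x+td,u)-\psi(x,u)}{t}.
\]
Letting $t\downarrow 0$ and using differentiability of $\psi(\cdot,u)$ gives $\langle\nabla\Phi(x),d\rangle\ge\langle\nabla_x\psi(x,u),d\rangle$. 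This holds for every $d$; replacing $d$ by $-d$ gives the reverse inequality. Hence $\langle\nabla\Phi(x)-\nabla_x\psi(x,u),d\rangle=0$ for all $d$, and so $\nabla\Phi(x)=\nabla_x\psi(x,u)$.

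This one-sided argument uses only that $u$ is a maximizer, so it needs neither compactness nor continuity. Those hypotheses are needed only to make sure $U^*(x)$ is nonempty: $U$ is compact and $\psi(x,\cdot)$ is continuous in $u$, which I read as implied by the joint regularity assumed. If continuity in $u$ is not available, the statement is vacuous when $U^*(x)=\emptyset$, and nothing further is needed.

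As a cross-check, one could instead invoke Danskin's formula $\Phi'(x;d)=\max_{u\in U^*(x)}\langle\nabla_x\psi(x,u),d\rangle$, which uses compactness and continuity of $\nabla_x\psi$. Differentiability then says $d\mapsto\max_{u\in U^*(x)}\langle\nabla_x\psi(x,u),d\rangle$ is linear. A support function of a set is linear only if its closed convex hull is a singleton, which again forces all the vectors $\nabla_x\psi(x,u)$ to coincide. I would present the elementary argument of the first paragraph and mention this as a remark.

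The singleton conclusion then follows immediately: every element of $\{\nabla_x\psi(x,u):u\in U^*(x)\}$ equals the single vector $\nabla\Phi(x)$. I expect no real obstacle here. The only delicate point is taking the limit for a fixed $u$ rather than a moving maximizer, and the argument above sidesteps that entirely.
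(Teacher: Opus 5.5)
Your proof is correct, and your main argument takes a different and more elementary route than the paper. The paper uses Danskin's theorem in subdifferential form, $\partial\Phi(x)=\mathrm{conv}\{\nabla_x\psi(x,u):u\in U^*(x)\}$. It then notes that differentiability forces $\partial\Phi(x)=\{\nabla\Phi(x)\}$, and that a set whose convex hull is a single point must coincide with that point. This is essentially the argument you relegate to a cross-check remark.

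Your main argument instead fixes $u\in U^*(x)$ and notes that $x'\mapsto\Phi(x')-\psi(x',u)$ is nonnegative, vanishes at $x$, and is differentiable at $x$. Its gradient there must therefore vanish. Your two-sided difference-quotient computation proves exactly this.

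What your route buys: the identity uses only differentiability of $\psi(\cdot,u)$ at $x$ and the fact that $u$ attains the maximum. Compactness of $U$ and continuity of $\nabla_x\psi$ are not needed for it. It also avoids specifying which subdifferential the convex-hull formula refers to. For nonconvex $\psi$ that formula is the Clarke subdifferential, and its collapse to a singleton under mere differentiability relies on Clarke regularity of the max function, or equivalently on the directional-derivative form of Danskin that your remark uses. The paper's route, in turn, keeps the lemma inside the standard Danskin framework, which also describes $\Phi$ at points where it is not differentiable.

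One small correction. When $U^*(x)=\emptyset$, only the gradient identity is vacuous. The set $\{\nabla_x\psi(x,u):u\in U^*(x)\}$ would then be empty, not a singleton. So the singleton conclusion genuinely requires $U^*(x)\neq\emptyset$. This is supplied by the statement's use of $\max$ (attainment), or by compactness together with continuity of $\psi(x,\cdot)$, just as the paper's appeal to Danskin implicitly assumes.
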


\begin{proof}
Danskin's theorem gives $\partial \Phi(x)=\mathrm{conv}\{\nabla_x\psi(x,u):u\in U^*(x)\}$.
If $\Phi$ is differentiable, then $\partial \Phi(x)=\{\nabla\Phi(x)\}$ is a singleton.
The convex hull of a set of vectors can be a singleton only if the set itself is contained in that singleton,
hence $\nabla_x\psi(x,u)$ is identical for all $u\in U^*(x)$ and equals $\nabla\Phi(x)$.
\end{proof}
We now apply this to the penalty envelope $\psi_\sigma$. 

Fix $\bar\sigma>0$ and recall
\[
\ell(x,\sigma):=\min_{y\in Y} h_\sigma(x,y),
\qquad
h_\sigma(x,y):=g(x,y)+\sigma f(x,y),
\qquad
\psi_\sigma(x):=\frac{\ell(x,\sigma)-\ell(x,0)}{\sigma}.
\]
Under Assumption~\ref{assump:SSC-nonunique}, Lemma~A.2 of~\cite{kwon2023penalty} implies that the value functions
$x\mapsto \ell(x,\bar\sigma)$ and $x\mapsto \ell(x,0)$ are differentiable on $\mathcal X$.
Moreover, for each $\sigma\in\{0,\bar\sigma\}$, differentiability of $\ell(\cdot,\sigma)$ together with
Lemma~\ref{lem:danskin-singleton} yields the smooth envelope identity
\[
\nabla_x \ell(x,\sigma)=\nabla_x h_\sigma(x,y^*)
\qquad(\forall\,y^*\in T(x,\sigma)).
\]
Consequently, $\psi_{\bar\sigma}$ is differentiable on $\mathcal X$ and satisfies
\begin{equation}\label{eq:grad-psi-sigma-envelope}
\nabla \psi_{\bar\sigma}(x)
=\frac{1}{\bar\sigma}\Bigl(\nabla_x h_{\bar\sigma}(x,y_{\bar\sigma}^*)-\nabla_x h_{0}(x,y_{0}^*)\Bigr)
=\nabla_x f(x,y_{\bar\sigma}^*)+\frac{1}{\bar\sigma}\Bigl(\nabla_x g(x,y_{\bar\sigma}^*)-\nabla_x g(x,y_{0}^*)\Bigr),
\end{equation}
where $y_{\bar\sigma}^*\in T(x,\bar\sigma)$ and $y_0^*\in T(x,0)$ are arbitrary minimizers.
By Lemma~\ref{lem:danskin-singleton}, the right-hand side of \eqref{eq:grad-psi-sigma-envelope} is independent of these choices.

\paragraph{Interpretation for \ref{eq:problem2}.}
With $\bar\sigma=1/\lambda$, Lemma~\ref{lem:psi-equals-Llambda} and~\eqref{eq:grad-psi-sigma-envelope} recover exactly the smooth-envelope module \ref{step:S1}
for $\Phi(x)=\mathcal L_\lambda^*(x)$, using the same first-order objects tracked by our flow
($y$ tracks $T(x,1/\lambda)$ and $z$ tracks $T(x,0)$).

\subsection{Repairing \ref{step:S2}: deviation-to-residual under nonunique minimizers}\label{app:nonunique-S2}
Once \ref{step:S1} is restored, the remaining obstruction is \ref{step:S2}: we must convert $x$-gradient deviations
(induced by imperfect inner tracking) into residuals dissipated by the inner dynamics.

For the $y$-subproblem at $\sigma=\bar\sigma$, define the $x$-gradient deviation as
\[
D_y(x,y)\ :=\ \nabla_x h_{\bar\sigma}(x,y)-\nabla_x h_{\bar\sigma}(x,y^*),
\qquad y^*\in T(x,\bar\sigma).
\]
By the smooth-envelope property established in Appendix~\ref{app:nonunique-S1}, the choice of $y^*\in T(x,\bar\sigma)$ does not matter, since $\nabla_x h_{\bar\sigma}(x,y^*)$ is identical for all minimizers in $T(x,\bar\sigma)$.

Similarly for the $z$-subproblem at $\sigma=0$,
\[
D_z(x,z)\ :=\ \nabla_x g(x,z)-\nabla_x g(x,z^*),\qquad z^*\in T(x,0).
\]

To recover the conversion $\|D_\cdot\|\le C_\cdot\|r_\cdot\|$ without strong convexity, we use the EB module
(Assumption~\ref{assump:inner-EB} in the main text), which is already formulated in terms of distance to a \emph{set} of minimizers
and hence is compatible with nonuniqueness.

\begin{proposition}\label{prop:EB-to-dev-nonunique}
Let $\psi(x,u)$ be differentiable and convex in $u$ on a region $\mathcal R$.
Assume $\nabla_x\psi(x,\cdot)$ is $L_{xu}$-Lipschitz in $u$ on $\mathcal R$ and the EB condition holds on $\mathcal R$:
$\operatorname{dist}(u,U^*(x))\le \kappa \|\nabla_u\psi(x,u)\|$, where $U^*(x)=\arg\min_u \psi(x,u)$.
Then for all $(x,u)\in\mathcal R$,
\[
\inf_{u^*\in U^*(x)}\|\nabla_x\psi(x,u)-\nabla_x\psi(x,u^*)\|
\ \le\ L_{xu}\,\kappa\,\|\nabla_u\psi(x,u)\|.
\]
\end{proposition}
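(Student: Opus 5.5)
The plan is to chain the Lipschitz property of $\nabla_x\psi(x,\cdot)$ with the error bound through a projection-type selection of a nearby minimizer. This is the same argument as Lemma~\ref{lem:deviation-EB}; the point of restating it here is that nonuniqueness of $U^*(x)$ causes no extra difficulty, because the bound is stated with an infimum over $u^*\in U^*(x)$. By Appendix~\ref{app:nonunique-S1}, when the envelope is differentiable this infimum is in fact attained by every minimizer.

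Fix $(x,u)\in\mathcal R$ and any $\varepsilon>0$. Since $U^*(x)\neq\emptyset$, the definition of $\operatorname{dist}$ gives some $u^*_\varepsilon\in U^*(x)$ with
\[
\|u-u^*_\varepsilon\|\le \operatorname{dist}(u,U^*(x))+\varepsilon .
\]
If $\psi(x,\cdot)$ is convex and continuous, then $U^*(x)$ is closed and convex, so the projection exists and we could take $\varepsilon=0$. Keeping $\varepsilon$ avoids relying on this.

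Next I invoke the Lipschitz hypothesis. This needs $(x,u^*_\varepsilon)\in\mathcal R$. I would handle this exactly as in Assumption~\ref{assump:inner-EB} and Lemma~\ref{lem:deviation-EB}: read the Lipschitz condition as holding along the fibre $\{x\}\times(\{u\}\cup U^*(x))$. Equivalently, take $\mathcal R$ to contain the relevant optimizers, which is the natural reading when $\mathcal R$ is a Lyapunov sublevel set and the minimizers have $H_\cdot=0$. This membership check is the only genuinely delicate point, and I will state it explicitly as part of the interpretation of ``Lipschitz on $\mathcal R$''. With it, we get
\[
\|\nabla_x\psi(x,u)-\nabla_x\psi(x,u^*_\varepsilon)\|\le L_{xu}\|u-u^*_\varepsilon\|\le L_{xu}\bigl(\kappa\|\nabla_u\psi(x,u)\|+\varepsilon\bigr),
\]
where the second step uses the EB condition.

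Finally, the left-hand side of the claim is an infimum over $U^*(x)$, so it is bounded above by the expression at $u^*_\varepsilon$. This gives
\[
\inf_{u^*\in U^*(x)}\|\nabla_x\psi(x,u)-\nabla_x\psi(x,u^*)\|\le L_{xu}\kappa\|\nabla_u\psi(x,u)\|+L_{xu}\varepsilon
\]
for every $\varepsilon>0$. Letting $\varepsilon\downarrow0$ yields the stated inequality. Convexity is used only to make the minimizer set well behaved (closed and convex), and is not needed for the inequality itself.
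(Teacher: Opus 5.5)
Your argument is correct and is essentially the paper's proof. The paper bounds the infimum by $L_{xu}\inf_{u^*\in U^*(x)}\|u-u^*\| = L_{xu}\operatorname{dist}(u,U^*(x))$ and then applies the EB condition, which is what your $\varepsilon$-approximate selection does explicitly; your remark that the Lipschitz bound must hold at the pairs $(x,u^*)$ with $u^*\in U^*(x)$ states an assumption the paper uses without saying so.
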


\begin{proof}
Fix $(x,u)\in\mathcal R$. By Lipschitzness,
\[
\inf_{u^*\in U^*(x)}\|\nabla_x\psi(x,u)-\nabla_x\psi(x,u^*)\|
\ \le\ L_{xu}\,\inf_{u^*\in U^*(x)}\|u-u^*\|
\ =\ L_{xu}\,\operatorname{dist}(u,U^*(x)).
\]
Applying EB yields the claim.
\end{proof}

Applying Proposition~\ref{prop:EB-to-dev-nonunique} to $\psi=h_{\bar\sigma}$ and $\psi=g$ gives conversion constants
\[
\|D_y\|\le C_y\|r_y\|,\qquad C_y=L_{x y}^{(h_{\bar\sigma})}\kappa_{h_{\bar\sigma}},
\qquad\text{and}\qquad
\|D_z\|\le C_z\|r_z\|,\qquad C_z=L_{xz}^{(g)}\kappa_g,
\]
where $r_y=\nabla_y h_{\bar\sigma}(x,y)$ and $r_z=\nabla_y g(x,z)$ are the inner residuals.

\subsection{Ergodic stationarity for \ref{eq:problem2} under nonunique inner solutions}\label{app:nonunique-ergodic}
We show that the unified Lyapunov proof for \ref{eq:problem2} extends to nonunique lower-level solutions
without changing the Lyapunov construction.
The only modifications are the two modules \ref{step:S1}--\ref{step:S2}.

Fix $\lambda>0$ and let $\bar\sigma:=1/\lambda$.
By Lemma~\ref{lem:psi-equals-Llambda}, the penalized outer value in \ref{eq:problem2} admits the penalty-envelope form
$\Phi(x)=\mathcal L_\lambda^*(x)=\psi_{\bar\sigma}(x)$.
Under the local sensitivity assumption in Appendix~\ref{app:nonunique-S1}
(solution-set Lipschitz continuity and the existence of a regular stable optimizer at $\sigma\in\{0,\bar\sigma\}$),
the value functions $x\mapsto \ell(x,\bar\sigma)$ and $x\mapsto \ell(x,0)$ are differentiable on the trajectory region,
and hence $\Phi$ is differentiable and satisfies the smooth-envelope identity required by \ref{step:S1}.
Moreover, Lemma~\ref{lem:danskin-singleton} implies that the optimizer-gradient is unique at each $\sigma\in\{0,\bar\sigma\}$,
so the choice of minimizers $y^*\in T(x,\bar\sigma)$ and $z^*\in T(x,0)$ does not affect the envelope gradients.

To execute the absorption step, we control the $x$-gradient mismatches by residuals dissipated by the auxiliary dynamics.
Assume that on the Lyapunov sublevel set: (i) the subproblems $y\mapsto f(x,y)+\lambda g(x,y)$ and $z\mapsto g(x,z)$
are convex and differentiable with nonempty minimizer sets, (ii) the gradient error-bound (EB) conditions hold for both subproblems,
and (iii) $\nabla_x f$ and $\nabla_x g$ are Lipschitz in the inner variable.
Then Lemma~\ref{lem:deviation-EB} yields the deviation-to-residual conversion \ref{step:S2}:
\[
\|D_y\|\le C_y\|r_y\|,
\qquad
\|D_z\|\le C_z\|r_z\|,
\]
with constants determined by the cross-Lipschitz moduli and EB constants on the same sublevel set.
With \ref{step:S1}--\ref{step:S2} in place, the Lyapunov derivative admits the normal form
\eqref{eq:timescale-premise}, and Lemma~\ref{lem:timescale} absorbs the cross terms once the time scales exceed the
corresponding thresholds. Integrating $\dot W(t)\le -c_0\|\nabla \Phi(x(t))\|^2$ over $[0,T]$ yields an ergodic $O(1/T)$ stationarity bound of the same form as in the unique case, with constants depending on the EB/Lipschitz moduli through $C_y$ and $C_z$.

\paragraph{Extension to \ref{eq:P1} and \ref{eq:P31}.}
The same modular argument applies to any nested value function of the form
$\Phi(x)=\operatorname{opt}_{u\in U}\psi(x,u)$ (min or max), with a coupled flow that tracks the corresponding inner solutions.
In \ref{eq:P1}, $\Phi(x)=F(x)=\max_{y\in Y} f(x,y)$ and the coupled flow tracks the inner maximizers.
Nonuniqueness affects only \ref{step:S1}: we require a local sensitivity assumption ensuring differentiability of $F$
(and hence an envelope gradient that is independent of the maximizer selection), while \ref{step:S2} is provided by EB
for the inner maximization written as minimization of $-f$.
In \ref{eq:P31}, $\Phi(x)=f^*(x)-g^*(x)$ with $f^*(x)=\min_y f(x,y)$ and $g^*(x)=\min_z g(x,z)$,
and the coupled flow tracks minimizers for both inner problems.
Again, differentiability of $f^*$ and $g^*$ on the trajectory region supplies \ref{step:S1}, and EB for each inner
subproblem supplies \ref{step:S2} separately for $(D_y,r_y)$ and $(D_z,r_z)$.
Once these two interfaces are verified, the remainder of the proof (normal-form Lyapunov bound, time-scale absorption,
and ergodic integration) is identical across \ref{eq:P1}--\ref{eq:P31}.

\paragraph{Remark.}\label{rem:sigma-zero-link}
Our ergodic Lyapunov analysis is carried out for a fixed surrogate parameter $\sigma>0$
(equivalently, a fixed $\lambda=1/\sigma$), and therefore requires only the two modules \ref{step:S1}--\ref{step:S2}.
Linking surrogate stationarity to the limiting hyper-objective at $\sigma=0+$ is a separate sensitivity question.
As shown in~\cite{kwon2023penalty}, under additional regularity assumptions for the parametric lower-level program
(e.g., local stability of solution sets and active constraints) one can invoke a Theorem~3.1-type result to justify
the existence/compatibility of mixed derivatives at $\sigma=0+$ and obtain quantitative bias control as $\sigma\downarrow 0$.
Importantly, this $\sigma\downarrow 0$ discussion is modular: it affects only the surrogate-to-limit translation and
does not modify the Lyapunov differentiation or the time-scale absorption steps.

\section{Further Discussions on Trajectories}
\label{sec:track-bilevel0}
Leveraging our continuous-time Lyapunov analysis, we quantify the finite-time tracking error between the trajectory $x(t)$ generated by our coupled system \eqref{eq:flow-bilevel-unified} and the ideal hyper-gradient flow trajectory $w(t)$ governed by $\dot w(t) = -\nabla F(w(t))$ as in \eqref{eq:bi-gf}. The proofs of the results in this section are deferred to Appendix~\ref{sec:track-bilevel}.


\begin{theorem}\label{thm:finite-time-tracking}
Suppose Assumptions~\ref{ass:2}, \ref{ass:3}, and \ref{ass:4} hold, and let
$\lambda\ge \bar\lambda$. Let $w(t)$ and $(x(t),y(t),z(t))$ be solutions of
\eqref{eq:bi-gf} and \eqref{eq:flow-bilevel-unified}, respectively, with
$x(0)=w(0)$.
    For $\delta>\delta_0(\lambda)$ and $\eta>\eta_0$ and for any $T > 0$, we have
    \begin{align}
    \begin{aligned}
&\sup_{t\in[0,T]}\|x(t)-w(t)\|
        \;\le\;
        e^{L_F T} \Biggl[
        \underbrace{\frac{C_{\mathrm{br}} T}{\lambda}}_{\text{Bias}}
        \;+\;\underbrace{\sqrt{T\,V_0} \left( \frac{C_y(\lambda)}{\sqrt{\alpha(\delta - \delta_0(\lambda))}} + \frac{C_z}{\sqrt{\beta(\eta - \eta_0)}} \right)}_{\text{Tracking Error}}
        \Biggr],
    \end{aligned}    
    \end{align}
    where $V_0 := W(x(0),y(0),z(0))-\mathcal L_{\lambda,\inf}^{*}$ is the initial Lyapunov energy, $C_{\mathrm{br}}$ is given in Corollary~\ref{cor:bilevel-eps}, and $C_y(\lambda)$ and $C_z$ are the deviation constants from \eqref{eq:DyDz-conv-main}.
\end{theorem}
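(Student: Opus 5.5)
The plan is a Grönwall argument on the error $e(t):=x(t)-w(t)$, with $e(0)=0$. The first step is to rewrite the $x$-dynamics as a perturbed hyper-gradient flow. From \eqref{eq:xdot} in the proof of Theorem~\ref{thm:fixed-lambda-simple}, $\dot x=-\nabla_x\mathcal L_\lambda^*(x)-D_y+\lambda D_z$. We add and subtract $\nabla F(x)$ to get
\[
\dot e=-\bigl(\nabla F(x)-\nabla F(w)\bigr)-\bigl(\nabla_x\mathcal L_\lambda^*(x)-\nabla F(x)\bigr)-D_y+\lambda D_z .
\]
Integrating over $[0,t]$ and using Assumption~\ref{ass:3}.4 gives
\[
\|e(t)\|\le L_F\int_0^t\|e(s)\|\,ds+\int_0^t\Bigl(\tfrac{C_{\mathrm{br}}}{\lambda}+\|D_y(s)\|+\lambda\|D_z(s)\|\Bigr)ds .
\]
The middle term is controlled by Lemma~\ref{lem:bridge} together with Assumption~\ref{ass:4}, since $C_{\mathrm{br}}(x)\le C_{\mathrm{br}}$ along the trajectory.

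The second step bounds the tracking integrals using Lemma~\ref{lem:bilevel-residuals}. By \eqref{eq:DyDz-conv-main} we have $\|D_y\|\le C_y(\lambda)\|r_y\|$ and $\|D_z\|\le C_z\|r_z\|$. Cauchy--Schwarz then gives
\[
\int_0^T\|D_y\|\le C_y(\lambda)\sqrt{T}\Bigl(\int_0^T\|r_y\|^2\Bigr)^{1/2}\le C_y(\lambda)\sqrt{T\,V_0/(\alpha(\delta-\delta_0(\lambda)))},
\]
where the last inequality is the integrated (pre-division-by-$T$) form of \eqref{eq:ergodic-F-lemma}. The same argument with $c_z(\eta,\lambda)=\beta\lambda^2(\eta-\eta_0)$ yields
\[
\lambda\int_0^T\|D_z\|\le \lambda C_z\sqrt{T V_0}/(\lambda\sqrt{\beta(\eta-\eta_0)})=C_z\sqrt{TV_0/(\beta(\eta-\eta_0))}.
\]
The factor $\lambda$ cancels exactly because $\tau_z=\eta\lambda^2$. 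This cancellation is the key point to verify; it is why the $\lambda$-scaling of the $z$-gap weight $\beta\lambda$ in $W$ was chosen.

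The third step applies the integral Grönwall inequality. Define $B(T):=C_{\mathrm{br}}T/\lambda+\sqrt{TV_0}\bigl(\cdots\bigr)$, which is nondecreasing in $t$ and bounds the forcing term on $[0,t]$ for every $t\le T$. Then $\|e(t)\|\le B(T)+L_F\int_0^t\|e\|$, which gives $\|e(t)\|\le B(T)e^{L_Ft}\le B(T)e^{L_FT}$. Taking the supremum over $t\in[0,T]$ yields the claim.

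The main obstacle is justifying that the pointwise estimates hold along the whole trajectory. This concerns the bridge constant (via Assumption~\ref{ass:4} in the region of interest), the conversions \eqref{eq:DyDz-conv-main} on the sublevel set $\{W\le W_0\}$ (guaranteed since $\dot W\le0$), and the global $L_F$-Lipschitzness applied to $x$ and $w$, which may leave that set. I would handle this by noting that Assumption~\ref{ass:3}.4 is global, and that the others are needed only at points $x(s)$, which stay in the sublevel set. It also requires $\lambda>\rho/\mu$ together with $\lambda\ge\bar\lambda$ so that Theorem~\ref{thm:fixed-lambda-simple} applies.
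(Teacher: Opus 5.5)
Your proposal is correct and follows essentially the same argument as the paper. You use the same decomposition of $\dot e$ into an $L_F$-smoothness term, the bridge-lemma bias $C_{\mathrm{br}}/\lambda$, and the deviations $-D_y+\lambda D_z$, followed by Cauchy--Schwarz with the integrated residual bounds of Lemma~\ref{lem:bilevel-residuals} (including the $\lambda$-cancellation via $c_z(\eta,\lambda)=\beta\lambda^2(\eta-\eta_0)$) and Gr\"onwall. The only difference is that you apply Gr\"onwall in integral form to $\|e(t)\|\le L_F\int_0^t\|e\|+\int_0^t u$, which is slightly cleaner because it avoids the paper's caveat about differentiating $\|e(t)\|$ at times where $e(t)=0$.
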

The following corollary characterizes the complexity required to achieve $\varepsilon$-accuracy. 
While increasing $\lambda$ reduces approximation bias, tracking precision is governed by the positive margins
$\delta-\delta_0(\lambda)$ and $\eta-\eta_0$ above the stability thresholds.
With the sharpened thresholds above, these stability thresholds do not exhibit the previous linear-in-$\lambda$ growth under $C_y(\lambda)=O(1)$.



\begin{corollary}\label{cor:eps-complexity}
    Fix $\varepsilon,  T>0$. To ensure that the trajectory approximation error satisfies $\sup_{t\in[0,T]}\|x(t)-w(t)\| \le \varepsilon$, it suffices to choose the penalty parameter $\lambda = \Theta\left(\frac{1}{\varepsilon}\right)$ and time-scales $\delta = O\left(\frac{1}{\varepsilon^2}\right), \eta = O\left(\frac{1}{\varepsilon^2}\right)$.
\end{corollary}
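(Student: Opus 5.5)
The plan is to read Corollary~\ref{cor:eps-complexity} directly off the bound in Theorem~\ref{thm:finite-time-tracking}. With $T$ fixed, the prefactor $e^{L_F T}$ is a constant, so it suffices to make each of the three contributions (bias, $y$-tracking, $z$-tracking) at most $\varepsilon e^{-L_F T}/3$.

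\textbf{Bias term.} Choose
\[
\lambda=\max\Bigl\{\bar\lambda,\ \tfrac{3C_{\mathrm{br}}Te^{L_FT}}{\varepsilon}\Bigr\}=\Theta(1/\varepsilon).
\]
This gives $C_{\mathrm{br}}T/\lambda\le \varepsilon e^{-L_FT}/3$. It also satisfies $\lambda\ge\bar\lambda$ and $\lambda>\rho/\mu$ once $\varepsilon$ is small, so the hypotheses of Theorem~\ref{thm:finite-time-tracking} hold.

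\textbf{Tracking terms.} For the $z$-term, $C_z=L_{gx}/\mu$ and $\eta_0$ from \eqref{eq:eta0-bilevel} are independent of $\lambda$. Setting
\[
\eta=\eta_0+\frac{9TV_0C_z^2e^{2L_FT}}{\beta\varepsilon^2}=O(\varepsilon^{-2})
\]
makes $\sqrt{TV_0}\,C_z/\sqrt{\beta(\eta-\eta_0)}\le\varepsilon e^{-L_FT}/3$. The $y$-term is handled the same way with $\delta=\delta_0(\lambda)+9TV_0C_y(\lambda)^2e^{2L_FT}/(\alpha\varepsilon^2)$. This is $O(\varepsilon^{-2})$ provided $C_y(\lambda)$ and $\delta_0(\lambda)$ stay bounded as $\lambda$ grows. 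That holds because $C_y(\lambda)=(L_{fx}+\lambda L_{gx})/(\lambda\mu-\rho)\to L_{gx}/\mu$ as $\lambda\to\infty$; for $\lambda\ge 2\rho/\mu$, say, one has $C_y(\lambda)\le 2(L_{fx}/\lambda_{\min}+L_{gx})/\mu=O(1)$. Consequently $\delta_0(\lambda)=O(C_y(\lambda)^2)=O(1)$ as well.

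\textbf{Main obstacle.} The step I expect to be delicate is the dependence of $V_0=W(x(0),y(0),z(0))-\mathcal L^*_{\lambda,\inf}$ on $\lambda$. The contribution $\beta\lambda(g(x(0),z(0))-g^*(x(0)))$ grows linearly in $\lambda$, and so does $\alpha(\mathcal L_\lambda(x(0),y(0))-\mathcal L_\lambda^*(x(0)))$, unless the initialization is chosen near the inner solutions. I would handle this by initializing $z(0)=y^*(x(0))$ and $y(0)=y_\lambda^*(x(0))$, or more generally with gaps $O(1/\lambda)$, so that $V_0=O(1)$. The lower bound $\mathcal L^*_{\lambda,\inf}$ causes no trouble: $\mathcal L^*_\lambda(x)$ is nondecreasing in $\lambda$, so $\mathcal L^*_{\lambda,\inf}\ge\mathcal L^*_{\lambda_{\min},\inf}$. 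Should the initialization not be controllable, I would state the dependence explicitly instead: $\delta,\eta=O(V_0/\varepsilon^2)$ remains $O(\varepsilon^{-2})$ under the standing convention that $V_0$ is treated as an $O(1)$ constant, which matches how the corollary is phrased.
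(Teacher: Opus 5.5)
Your proposal is correct and follows the same argument the paper intends. The paper gives no separate proof: the corollary is read off Theorem~\ref{thm:finite-time-tracking}. Taking $\lambda=\Theta(1/\varepsilon)$ removes the bias term, and margins $\delta-\delta_0(\lambda)$, $\eta-\eta_0$ of order $\varepsilon^{-2}$ control the tracking terms, using the paper's remark that $C_y(\lambda)=O(1)$, so neither $\delta_0(\lambda)$ nor $\eta_0$ grows with $\lambda$. Your treatment of $V_0$ is more careful than the paper's, which silently treats $V_0$ as $O(1)$. With uncontrolled initialization, the $\lambda$-weighted gap terms in $W$ make $V_0=\Theta(1/\varepsilon)$, and the same read-off would only give $\delta,\eta=O(\varepsilon^{-3})$; your initialization caveat is exactly the hidden assumption needed.
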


In contrast to the general smooth case, where the error bound grows exponentially with $T$, the error remains uniformly bounded over time if the hyper-objective satisfies a strong convexity assumption.


\begin{theorem}
\label{thm:uniform-tracking-sc}
    Under the same setting as in Theorem~\ref{thm:finite-time-tracking}, additionally assume that the hyper-objective $F(x)$ is $\mu_F$-strongly convex. Then, 
    \begin{align}
    \begin{aligned}
        &\|x(t)-w(t)\|
        \;\le\;
        \frac{C_{\mathrm{br}}}{\mu_F \lambda}
        \;+\;
      \sqrt{\frac{V_0}{\mu_F}} \left( \frac{C_y(\lambda)}{\sqrt{\alpha(\delta - \delta_0(\lambda))}} + \frac{C_z}{\sqrt{\beta(\eta - \eta_0)}} \right).
    \end{aligned}
    \end{align}
\end{theorem}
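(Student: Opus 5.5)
We compare the two trajectories through the error $e(t):=x(t)-w(t)$. We write the $x$-dynamics of \eqref{eq:flow-bilevel-unified} as a perturbed hyper-gradient flow, exactly as in the proof of Theorem~\ref{thm:fixed-lambda-simple}:
\[
\dot x=-\nabla_x\mathcal L_\lambda^*(x)-D_y+\lambda D_z=-\nabla F(x)-b(x)-D_y+\lambda D_z,
\]
where $b(x):=\nabla_x\mathcal L_\lambda^*(x)-\nabla F(x)$ is the penalty bias. Subtracting $\dot w=-\nabla F(w)$ gives
\[
\dot e=-\bigl(\nabla F(x)-\nabla F(w)\bigr)-b(x)-D_y+\lambda D_z .
\]

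The key step is to use strong convexity of $F$ as a contraction on the first term. Strong monotonicity gives $\langle\nabla F(x)-\nabla F(w),e\rangle\ge\mu_F\|e\|^2$. Hence, wherever $\|e\|>0$ (and the case $e=0$ is handled by a standard Dini-derivative argument),
\[
\frac{d}{dt}\|e\|\le-\mu_F\|e\|+\|b(x)\|+\|D_y\|+\lambda\|D_z\|.
\]
Since $e(0)=0$, Grönwall's inequality in its linear form yields
\[
\|e(t)\|\le\int_0^t e^{-\mu_F(t-s)}\bigl(\|b\|+\|D_y\|+\lambda\|D_z\|\bigr)(s)\,ds .
\]

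We bound the three forcing terms separately.

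\emph{Bias.} Lemma~\ref{lem:bridge} together with Assumption~\ref{ass:4} gives $\|b\|\le C_{\mathrm{br}}/\lambda$. Since $\int_0^t e^{-\mu_F(t-s)}ds\le 1/\mu_F$, this term contributes at most $C_{\mathrm{br}}/(\mu_F\lambda)$.

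\emph{Tracking.} We use \eqref{eq:DyDz-conv-main}, which gives $\|D_y\|\le C_y(\lambda)\|r_y\|$, and apply Cauchy--Schwarz with the exponential weight:
\[
\int_0^t e^{-\mu_F(t-s)}\|r_y\|\,ds\le\Bigl(\int_0^t e^{-2\mu_F(t-s)}ds\Bigr)^{1/2}\Bigl(\int_0^T\|r_y\|^2ds\Bigr)^{1/2}\le(2\mu_F)^{-1/2}\sqrt{V_0/c_y(\delta,\lambda)}.
\]
The last factor comes from the integrated dissipation inequality in the proof of Lemma~\ref{lem:bilevel-residuals}, namely $c_y\int_0^T\|r_y\|^2\le V_0$ with $c_y=\alpha(\delta-\delta_0(\lambda))$. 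This yields a term of the form $\sqrt{V_0/\mu_F}\,C_y(\lambda)/\sqrt{\alpha(\delta-\delta_0)}$; the extra factor $1/\sqrt2$ is simply dropped.

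For the $z$ part, $\lambda\|D_z\|\le\lambda C_z\|r_z\|$ and $c_z=\beta\lambda^2(\eta-\eta_0)$. The factor $\lambda$ therefore cancels against $\sqrt{\lambda^2}$, giving $C_z/\sqrt{\beta(\eta-\eta_0)}$. This cancellation is the one delicate bookkeeping point: the $\lambda$ multiplying $D_z$ in $\dot x$ must be matched to the $\lambda^2$ inside $c_z$. Because the bound on $\int_0^T\|r\|^2$ holds for every horizon, the estimate is uniform in $t$, which removes the $e^{L_F T}$ and $\sqrt T$ growth seen in Theorem~\ref{thm:finite-time-tracking}.

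The main obstacle is justifying the differential inequality for $\|e\|$ when $e$ may vanish, and ensuring that $C_{\mathrm{br}}$ is valid along the whole $x$-trajectory. I would handle the first by working with $\frac{d}{dt}\tfrac12\|e\|^2$ and $\sqrt{\|e\|^2+\epsilon}$, letting $\epsilon\to0$. I would handle the second by invoking the region-of-interest assumption, using that the trajectory stays in $\{W\le W_0\}$.
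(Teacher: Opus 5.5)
Your proposal is correct and follows essentially the same route as the paper. You differentiate $\tfrac12\|e\|^2$, use strong monotonicity of $\nabla F$ to get $\tfrac{d}{dt}\|e\|\le-\mu_F\|e\|+C_{\mathrm{br}}/\lambda+\|D_y\|+\lambda\|D_z\|$, solve the linear comparison inequality from $e(0)=0$, and bound the convolved tracking terms by Cauchy--Schwarz plus the integrated dissipation $c_y\int\|r_y\|^2\le V_0$, $c_z\int\|r_z\|^2\le V_0$, with $\lambda$ cancelling against $c_z=\beta\lambda^2(\eta-\eta_0)$. The differences are cosmetic: you put the whole exponential weight $e^{-2\mu_F(t-s)}$ on one Cauchy--Schwarz factor (gaining a $1/\sqrt2$ you then discard) where the paper splits it symmetrically, and you treat the points where $e=0$ and the validity of $C_{\mathrm{br}}$ along the trajectory explicitly, which the paper leaves implicit.
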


As a separate sanity check for the trajectory-tracking bound, Appendix~\ref{app:exp2} gives a one-dimensional quadratic example illustrating the bias--tracking-margin tradeoff in a setting with a closed-form ideal hyper-gradient flow.

\section{Proof of Theorems in Section~\ref{sec:track-bilevel0}}\label{sec:track-bilevel}

\begin{assumption}\label{ass:7}
Suppose that
\[
\|\nabla F(x)-\nabla F(x')\|
\le
L_F\|x-x'\|,
\qquad
\forall x,x'\in\mathcal X.
\]
\end{assumption}

\subsection{Proof of Theorem~\ref{thm:finite-time-tracking}}

\begin{proof}
    Let $e(t) := x(t) - w(t)$ be the trajectory error. We analyze the growth of $\|e(t)\|$ by differentiating its squared norm:
    \[
    \frac{1}{2}\frac{d}{dt}\|e(t)\|^2
    \;=\; \langle x(t)-w(t),\, \dot x(t) - \dot w(t) \rangle.
    \]
    Recall the dynamics of the ideal flow is $\dot w = -\nabla F(w)$. For the coupled system, as derived in \eqref{eq:xdot}, the dynamics of $x(t)$ satisfy:
    \[
    \dot x \;=\; -\nabla_x \mathcal L_\lambda^*(x) - D_y + \lambda D_z,
    \]
    where $D_y$ and $D_z$ are the deviation terms defined in Proposition~\ref{prop:bilevel-deviation}.
    Substituting these dynamics into the derivative yields:
    \begin{align*}
        \frac{1}{2}\frac{d}{dt}\|e\|^2
        \;&=\; \langle x-w, \, -\nabla_x \mathcal L_\lambda^*(x) - D_y + \lambda D_z + \nabla F(w) \rangle \\
        \;&=\; \langle x-w, \, \nabla F(w) - \nabla_x \mathcal L_\lambda^*(x) - D_y + \lambda D_z \rangle.
    \end{align*}

    We decompose the gradient difference term by adding and subtracting $\nabla F(x)$:
    \begin{align*}
        \nabla F(w) - \nabla_x \mathcal L_\lambda^*(x)
        \;=\; \underbrace{\bigl(\nabla F(w) - \nabla F(x)\bigr)}_{\text{Smoothness}}
        \;+\; \underbrace{\bigl(\nabla F(x) - \nabla_x \mathcal L_\lambda^*(x)\bigr)}_{\text{Approximation Bias}}.
    \end{align*}
    Using the Cauchy-Schwarz inequality, we bound each component:
    \begin{enumerate}[label=(\roman*)]
        \item \textbf{Smoothness:} Since $F$ is $L_F$-smooth for some $L_F>0$ (Assumption~\ref{ass:7}), 
        \[
        \langle x-w, \nabla F(w) - \nabla F(x) \rangle \;\le\; \|e\| \, \|\nabla F(w) - \nabla F(x)\| \;\le\; L_F \|e\|^2.
        \]
        \item \textbf{Bias:} By the Bridge Lemma (Lemma~\ref{lem:bridge}), $\|\nabla F(x) - \nabla_x \mathcal L_\lambda^*(x)\| \le C_{\mathrm{br}}/\lambda$. Thus,
        \[
        \langle x-w, \nabla F(x) - \nabla_x \mathcal L_\lambda^*(x) \rangle \;\le\; \|e\| \frac{C_{\mathrm{br}}}{\lambda}.
        \]
        \item \textbf{Tracking Errors:}
        \[
        \langle x-w, -D_y + \lambda D_z \rangle \;\le\; \|e\| \bigl( \|D_y\| + \lambda \|D_z\| \bigr).
        \]
    \end{enumerate}
    Combining these bounds results in a differential inequality for $\|e(t)\|$ (assuming $\|e(t)\|\neq 0$):
    \[
    \frac{d}{dt}\|e(t)\| \;\le\; L_F \|e(t)\| \;+\; \left( \frac{C_{\mathrm{br}}}{\lambda} \;+\; \|D_y(t)\| \;+\; \lambda \|D_z(t)\| \right).
    \]
    Applying Gr\"onwall's inequality with $e(0)=0$:
    \begin{equation}\label{eq:gronwall-step}
        \|e(t)\| \;\le\; \int_0^t e^{L_F(t-s)} \left( \frac{C_{\mathrm{br}}}{\lambda} + \|D_y(s)\| + \lambda \|D_z(s)\| \right) ds.
    \end{equation}
    Taking the supremum over $t \in [0, T]$:
    \begin{align}\label{eq:sup-bound-raw}
        \sup_{t\in[0,T]} \|e(t)\|
        \;\le\; e^{L_F T} \left( \frac{C_{\mathrm{br}} T}{\lambda} + \int_0^T \|D_y(s)\| ds + \lambda \int_0^T \|D_z(s)\| ds \right).
    \end{align}
    
    We bound the integrals using the Lyapunov dissipation. 
    Combining Lemma~\ref{lem:bilevel-residuals} with the deviation bounds $\|D_y\| \le \ C_y(\lambda)\,\|r_y\|$ and $\|D_z\| \le C_z \|r_z\|$, and applying Cauchy-Schwarz to the integral:
    \begin{align*}
        \int_0^T \|D_y(s)\| ds
        \;&\le\; \sqrt{T} \, C_y(\lambda) \left( \int_0^T \|r_y(s)\|^2 ds \right)^{1/2} \\
        \;&\le\; \sqrt{T} \, C_y(\lambda) \left( \frac{W(0) - W(T)}{c_y(\delta,\lambda)} \right)^{1/2}
        \;\le\; \sqrt{T V_0} \frac{C_y(\lambda)}{\sqrt{c_y(\delta,\lambda)}}.
    \end{align*}
    Similarly for the $z$-term:
    \begin{align*}
        \lambda \int_0^T \|D_z(s)\| ds
        \;&\le\; \lambda \sqrt{T} \, C_z \left( \int_0^T \|r_z(s)\|^2 ds \right)^{1/2} \leq  \lambda \sqrt{T V_0} \frac{C_z}{\sqrt{c_z(\eta,\lambda)}}.
    \end{align*}
    Plugging these integral bounds back into \eqref{eq:sup-bound-raw} yields the stated theorem.
\end{proof}

\subsection{Proof of Theorem~\ref{thm:uniform-tracking-sc}}

\begin{proof}
    Let $e(t) := x(t) - w(t)$. Differentiating the squared norm $\frac{1}{2}\|e(t)\|^2$:
    \[
    \frac{1}{2}\frac{d}{dt}\|e\|^2 \;=\; \langle x-w, \dot x - \dot w \rangle.
    \]
    Substituting the dynamics $\dot x = -\nabla_x \mathcal L_\lambda^*(x) - D_y + \lambda D_z$ and $\dot w = -\nabla F(w)$:
    \begin{align*}
        \frac{1}{2}\frac{d}{dt}\|e\|^2 
        \;&=\; \langle x-w, -\nabla F(w) - (-\nabla F(x)) \rangle + \langle x-w, -\nabla F(x) - (-\nabla_x \mathcal L_\lambda^*(x)) \rangle \\
        &\quad + \langle x-w, -D_y + \lambda D_z \rangle.
    \end{align*}
    We bound each term using the assumptions:
    \begin{enumerate}[label=(\roman*)]
        \item \textbf{Strong Convexity:} Since $F$ is $\mu_F$-strongly convex,
        \[
        \langle x-w, -(\nabla F(x) - \nabla F(w)) \rangle \;\le\; -\mu_F \|x-w\|^2 \;=\; -\mu_F \|e\|^2.
        \]
        \item \textbf{Bias:} Using the Bridge Lemma, $\|\nabla F(x) - \nabla_x \mathcal L_\lambda^*(x)\| \le \frac{C_{\mathrm{br}}}{\lambda}$, so:
        \[
        \langle x-w, \nabla F(w) - \nabla_x \mathcal L_\lambda^*(x) \rangle \;\le\; \|e\| \frac{C_{\mathrm{br}}}{\lambda}.
        \]
        \item \textbf{Tracking Errors:}
        \[
        \langle x-w, -D_y + \lambda D_z \rangle \;\le\; \|e\| (\|D_y\| + \lambda \|D_z\|).
        \]
    \end{enumerate}
    Combining these yields the differential inequality for the norm $\|e(t)\|$:
    \[
    \frac{d}{dt}\|e(t)\| \;\le\; -\mu_F \|e(t)\| \;+\; \underbrace{\left( \frac{C_{\mathrm{br}}}{\lambda} + \|D_y(t)\| + \lambda \|D_z(t)\| \right)}_{u(t)}.
    \]
    Solving this ODE with $e(0)=0$:
    \[
    \|e(t)\| \;\le\; \int_0^t e^{-\mu_F(t-s)} u(s) \, ds.
    \]
    We now bound the integral of each component of $u(s)$.
    
    1. \textbf{Bias Term:}
    \[
    \int_0^t e^{-\mu_F(t-s)} \frac{C_{\mathrm{br}}}{\lambda} \, ds \;=\; \frac{C_{\mathrm{br}}}{\lambda} \frac{1 - e^{-\mu_F t}}{\mu_F} \;\le\; \frac{C_{\mathrm{br}}}{\mu_F \lambda}.
    \]
    
    2. \textbf{Tracking Terms:} We apply the Cauchy-Schwarz inequality to the convolution integral. For any non-negative function $\phi(s)$:
    \begin{align*}
        \left( \int_0^t e^{-\mu_F(t-s)} \phi(s) \, ds \right)^2 
        \;&\le\; \left( \int_0^t e^{-\mu_F(t-s)} \, ds \right) \left( \int_0^t e^{-\mu_F(t-s)} \phi(s)^2 \, ds \right) \\
        \;&\le\; \frac{1}{\mu_F} \int_0^t \phi(s)^2 \, ds.
    \end{align*}
    Therefore, $\int_0^t e^{-\mu_F(t-s)} \phi(s) \, ds \le \frac{1}{\sqrt{\mu_F}} \sqrt{\int_0^t \phi(s)^2 ds}$.
    
    Using the Lyapunov dissipation bounds $\int_0^\infty \|D_y\|^2 ds \le \frac{C_y^2 V_0}{c_y}$ and $\int_0^\infty \lambda^2 \|D_z\|^2 ds \le \frac{C_z^2 V_0}{c_z}$ (where simplified margin notation is used), we get:
    \begin{align*}
        \int_0^t e^{-\mu_F(t-s)} \|D_y(s)\| \, ds 
        \;&\le\; \frac{1}{\sqrt{\mu_F}} \left( \frac{C_y(\lambda)^2 V_0}{c_y(\delta,\lambda)} \right)^{1/2} 
        \;=\; \sqrt{\frac{V_0}{\mu_F}} \frac{C_y(\lambda)}{\sqrt{c_y(\delta,\lambda)}}, \\
        \int_0^t e^{-\mu_F(t-s)} \lambda \|D_z(s)\| \, ds 
        \;&\le\; \frac{1}{\sqrt{\mu_F}} \left( \lambda^2 \frac{C_z^2 V_0}{c_z(\eta,\lambda)} \right)^{1/2} 
        \;=\; \sqrt{\frac{V_0}{\mu_F}} \frac{C_z}{\sqrt{\beta(\eta - \eta_0)}}.
    \end{align*}
    Note that for the $z$-term, substituting $c_z = \beta \lambda^2 (\eta-\eta_0)$ cancels the $\lambda$ factor, resulting in the expression in the theorem. Summing these bounds completes the proof.
\end{proof}

\section{One-dimensional trajectory-tracking sanity check}\label{app:exp2}

\paragraph{Goal.}
This experiment is a sanity check for the finite-time trajectory-tracking bound in
Section~\ref{sec:track-bilevel0}. It is not intended as a discrete-time threshold
validation. Instead, it illustrates the bias--tracking-margin tradeoff in a
one-dimensional quadratic instance where the ideal hyper-gradient flow is
available in closed form. Increasing $\lambda$ reduces the surrogate bias, while
accurate tracking requires positive margins
$\delta-\delta_0(\lambda)$ and $\eta-\eta_0$ above the stability
thresholds.

\paragraph{Problem instance.}
We use the 1D quadratic bilevel problem
\[
g(x,y)=\tfrac12(y-ax)^2,\qquad f(x,y)=\tfrac12(x-by)^2,
\]
with $(a,b)=(0.5,1.0)$ and $\mu=1$.
Then the lower-level optimizer is $y^*(x)=ax$ and the true hyper-objective is
\[
F(x)=f\bigl(x,y^*(x)\bigr)=\tfrac12(1-ab)^2x^2.
\]
Hence the ideal hyper-gradient flow $\dot w=-\nabla F(w)$ admits the closed form
\[
w(t)=w(0)\exp\!\bigl(-(1-ab)^2 t\bigr),\qquad (1-ab)^2=0.25,
\]
which we use as the reference trajectory.

\begin{figure}[t!]
\centering 
\includegraphics[width=1\textwidth]{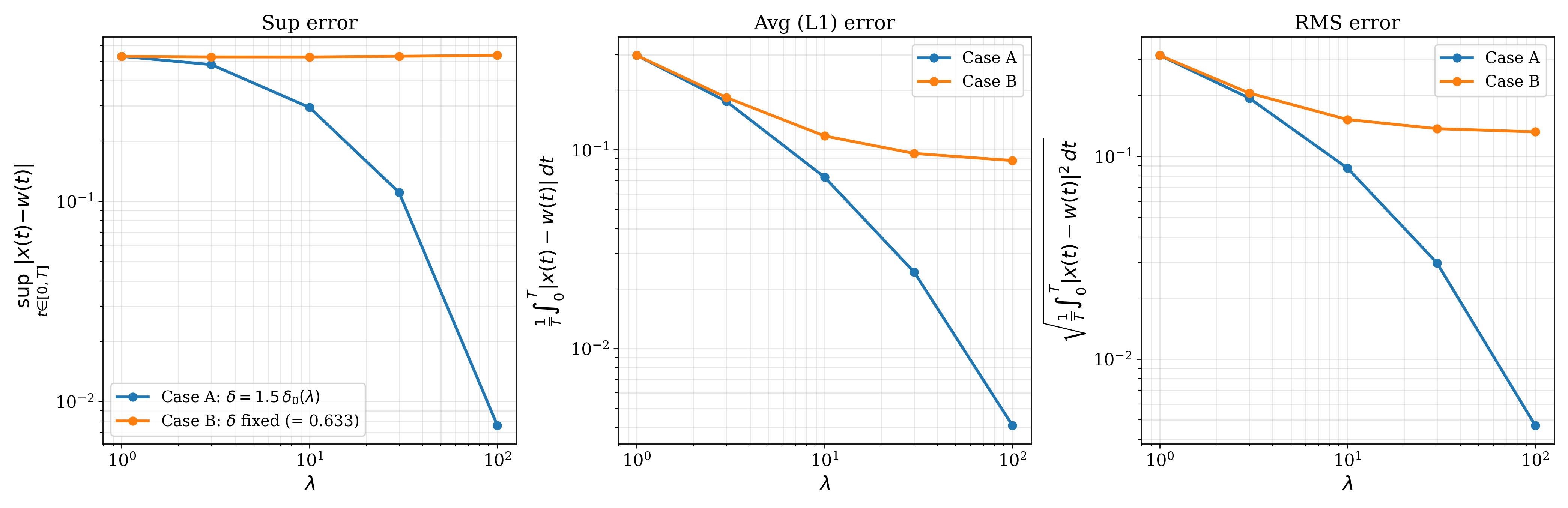}
\caption{Tracking error between the single-loop lifted-penalty trajectory $x(t)$ and the ideal hyper-gradient flow $w(t)$ as a function of $\lambda$.
Case~A uses margin-aware scaling $\delta=1.5\,\delta_0(\lambda)$ with $\eta=8\,\eta_0$; Case~B fixes $\delta$ (set at $\lambda=1$) while keeping the same $\eta$.}
\label{fig:exp2-metrics}
\end{figure}


\paragraph{Thresholds and cases.}
For each $\lambda$, we compute the explicit thresholds $\delta_0(\lambda)$ and $\eta_0$ using the closed-form constants of this instance (as in \eqref{eq:eta0-bilevel}, specialized to the quadratic setting).
We compare:
\begin{itemize}
\item \textbf{Case A (threshold-aware).} $\delta=1.5\,\delta_0(\lambda)$ and $\eta=8\,\eta_0$.
\item \textbf{Case B (under-separated baseline).} $\delta$ is fixed as $\delta=1.5\,\delta_0(1)$ for all $\lambda$, while using the same $\eta$ as in Case~A for the corresponding $\lambda$ (so that differences are attributable to $\delta$).
\end{itemize}
This diagnostic compares a margin-aware choice of the time-scale parameters with a fixed-time-scale baseline, illustrating that increasing $\lambda$ alone does not guarantee accurate tracking unless the positive margins above the stability thresholds are also controlled.

\paragraph{Metrics.}
We report three tracking error metrics between $x(t)$ and the ideal trajectory $w(t)$:
\[
\text{Sup:}\quad \sup_{t\in[0,T]}|x(t)-w(t)|,
\qquad
\text{Avg:}\quad \frac1T\int_0^T|x(t)-w(t)|\,dt,
\qquad
\text{RMS:}\quad \Big(\frac1T\int_0^T|x(t)-w(t)|^2dt\Big)^{1/2}.
\]
The averaged metrics align with the ergodic nature of our Lyapunov guarantees, while the sup metric provides a conservative worst-case diagnostic.

\paragraph{Results and interpretation.}
Figure~\ref{fig:exp2-metrics} shows a clear separation between the two regimes.
With margin-aware time scales, increasing $\lambda$ reduces the bias and improves
tracking. With a fixed $y$-tracking scale, the benefit of increasing $\lambda$ is
limited because the positive tracking margin is not maintained. This supports
the trajectory-tracking interpretation of Theorem~\ref{thm:finite-time-tracking}:
penalty bias and residual tracking error must be controlled together.

\section{Euler discretization analogue}\label{app:euler-discretization}

This appendix records a deterministic Euler analogue of the continuous-time
Lyapunov normal form. The purpose is not to give a complete discrete-time
complexity theorem for every possible discretization, but to make explicit how
the continuous-time S1--S3 structure transfers to a one-step Lyapunov analysis.

\begin{proposition}\label{prop:euler-one-step}
Let $u:=(x,y,z)$ and let $G(u):=(\dot x,\dot y,\dot z)$ denote the right-hand
side of a coupled flow. Let $\mathcal R$ be a region containing the relevant
Euler segments
\[
\{u_k+s hG(u_k):\; s\in[0,1],\ k\ge 0\}.
\]
Assume that $\nabla W$ is $L_W$-Lipschitz on $\mathcal R$.

Suppose that, on $\mathcal R$, the continuous-time Lyapunov normal form satisfies
\begin{equation}\label{eq:euler-ct-normal-form}
\dot W(u)
\le
-c_1\|\nabla\Phi(x)\|^2
-c_2\|r_y\|^2
-c_3\|r_z\|^2,
\end{equation}
for some $c_1,c_2,c_3>0$. Suppose further that the vector field is controlled by
the same quantities: there exist constants $M_1,M_2,M_3\ge 0$ such that
\begin{equation}\label{eq:euler-vector-field-control}
\|G(u)\|^2
\le
M_1\|\nabla\Phi(x)\|^2
+
M_2\|r_y\|^2
+
M_3\|r_z\|^2
\qquad (u\in\mathcal R).
\end{equation}
Consider the explicit Euler discretization
\[
u_{k+1}=u_k+hG(u_k),
\]
or equivalently
\[
x_{k+1}=x_k+h\dot x_k,\qquad
y_{k+1}=y_k+h\dot y_k,\qquad
z_{k+1}=z_k+h\dot z_k.
\]
If
\begin{equation}\label{eq:euler-h-condition}
0<h\le
\min_{i:\,M_i>0}
\frac{c_i}{L_WM_i},
\end{equation}
where $(c_1,c_2,c_3)$ and $(M_1,M_2,M_3)$ are matched componentwise, then
\begin{equation}\label{eq:euler-one-step-decrease}
W(u_{k+1})-W(u_k)
\le
-\frac{h c_1}{2}\|\nabla\Phi(x_k)\|^2
-\frac{h c_2}{2}\|r_{y,k}\|^2
-\frac{h c_3}{2}\|r_{z,k}\|^2.
\end{equation}
Consequently, if $W_{\inf}:=\inf_{u\in\mathcal R}W(u)>-\infty$, then
\begin{equation}\label{eq:euler-ergodic-bound}
\frac1N\sum_{k=0}^{N-1}\|\nabla\Phi(x_k)\|^2
\le
\frac{2(W(u_0)-W_{\inf})}{c_1hN},
\end{equation}
and analogous averaged bounds hold for the residual terms.
\end{proposition}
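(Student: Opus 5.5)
The plan is the standard descent lemma for an $L_W$-smooth function along the Euler segment, combined with the normal form \eqref{eq:euler-ct-normal-form} and the vector-field control \eqref{eq:euler-vector-field-control}.

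\textbf{Step 1: descent lemma.} Fix $k$ and write $G_k:=G(u_k)$. The segment $u_k+shG_k$, $s\in[0,1]$, lies in $\mathcal R$ by assumption, and $\nabla W$ is $L_W$-Lipschitz there. The fundamental theorem of calculus along this segment then gives
\[
W(u_{k+1})-W(u_k)\le h\langle\nabla W(u_k),G_k\rangle+\frac{L_Wh^2}{2}\|G_k\|^2 .
\]
By the chain rule, the inner product $\langle\nabla W(u_k),G_k\rangle$ is exactly $\dot W(u_k)$, the derivative of $W$ along the flow evaluated at $u_k$. This is the same quantity bounded in the continuous-time analysis, for instance \eqref{eq:Wdot-final} in the bilevel case.

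\textbf{Step 2: plug in both bounds.} Substituting \eqref{eq:euler-ct-normal-form} and \eqref{eq:euler-vector-field-control} yields
\[
W(u_{k+1})-W(u_k)\le\sum_{i=1}^3\Bigl(-hc_i+\frac{L_Wh^2M_i}{2}\Bigr)Q_i ,
\]
where $Q_1=\|\nabla\Phi(x_k)\|^2$, $Q_2=\|r_{y,k}\|^2$ and $Q_3=\|r_{z,k}\|^2$.

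\textbf{Step 3: apply the step-size condition.} For each $i$ with $M_i>0$, condition \eqref{eq:euler-h-condition} gives $hL_WM_i\le c_i$. Hence $L_Wh^2M_i/2\le hc_i/2$, and the coefficient of $Q_i$ is at most $-hc_i/2$. For $M_i=0$ the coefficient is $-hc_i\le -hc_i/2$. Since every $Q_i\ge0$, this proves \eqref{eq:euler-one-step-decrease}.

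\textbf{Step 4: telescope.} Drop the nonpositive residual terms, sum over $k=0,\dots,N-1$, and telescope. Using $W(u_N)\ge W_{\inf}$, which holds because $u_N\in\mathcal R$ as an endpoint of a segment, gives
\[
\frac{hc_1}{2}\sum_{k=0}^{N-1}\|\nabla\Phi(x_k)\|^2\le W(u_0)-W_{\inf}.
\]
Dividing by $hc_1N/2$ yields \eqref{eq:euler-ergodic-bound}. The residual averages follow identically by keeping the $c_2$ or $c_3$ term instead.

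\textbf{Main obstacle.} The one subtle point is justifying that $\dot W(u_k)=\langle\nabla W(u_k),G(u_k)\rangle$ obeys the pointwise normal form at the grid points. The continuous-time proofs derive $\dot W$ along trajectories, but the derivation in fact bounds the directional derivative $\langle\nabla W(u),G(u)\rangle$ at any point $u\in\mathcal R$. This holds because it uses only algebraic identities in $(A,D_y,D_z,r_y,r_z)$ and the conversion bounds, both valid on $\mathcal R$. I would note this explicitly, interpreting \eqref{eq:euler-ct-normal-form} as a statement about that directional derivative, so Step 1 applies directly.
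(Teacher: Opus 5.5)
Your proof is correct and follows the paper's argument essentially line for line. Both use the $L_W$-smooth descent estimate along the Euler segment, identify $\langle\nabla W(u_k),G(u_k)\rangle$ with $\dot W(u_k)$, substitute the normal form and the vector-field bound, use the step-size condition to get $c_i-\tfrac{L_WhM_i}{2}\ge\tfrac{c_i}{2}$, and telescope. Your two extra remarks are harmless clarifications of points the paper leaves implicit: that $W(u_N)\ge W_{\inf}$ holds because $u_N\in\mathcal R$ is a segment endpoint, and that the normal form is a pointwise bound on the directional derivative.
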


\begin{proof}
By the $L_W$-Lipschitz continuity of $\nabla W$ on $\mathcal R$, the standard
one-step descent estimate gives
\[
W(u_{k+1})-W(u_k)
\le
\langle \nabla W(u_k),u_{k+1}-u_k\rangle
+
\frac{L_W}{2}\|u_{k+1}-u_k\|^2.
\]
Using $u_{k+1}=u_k+hG(u_k)$, we obtain
\[
W(u_{k+1})-W(u_k)
\le
h\langle \nabla W(u_k),G(u_k)\rangle
+
\frac{L_W}{2}h^2\|G(u_k)\|^2.
\]
The first-order term is exactly the continuous-time derivative of $W$ along the
flow:
\[
\langle \nabla W(u_k),G(u_k)\rangle=\dot W(u_k).
\]
Therefore, by \eqref{eq:euler-ct-normal-form} and
\eqref{eq:euler-vector-field-control},
\[
\begin{aligned}
W(u_{k+1})-W(u_k)
\le\;&
-hc_1\|\nabla\Phi(x_k)\|^2
-hc_2\|r_{y,k}\|^2
-hc_3\|r_{z,k}\|^2\\
&+
\frac{L_W}{2}h^2
\Bigl(
M_1\|\nabla\Phi(x_k)\|^2
+
M_2\|r_{y,k}\|^2
+
M_3\|r_{z,k}\|^2
\Bigr).
\end{aligned}
\]
Equivalently,
\[
\begin{aligned}
W(u_{k+1})-W(u_k)
\le\;&
-h\Bigl(c_1-\frac{L_WhM_1}{2}\Bigr)\|\nabla\Phi(x_k)\|^2\\
&-
h\Bigl(c_2-\frac{L_WhM_2}{2}\Bigr)\|r_{y,k}\|^2\\
&-
h\Bigl(c_3-\frac{L_WhM_3}{2}\Bigr)\|r_{z,k}\|^2.
\end{aligned}
\]
The step-size condition \eqref{eq:euler-h-condition} implies
\[
c_i-\frac{L_WhM_i}{2}\ge \frac{c_i}{2},
\qquad i=1,2,3,
\]
with the convention that no restriction is needed when $M_i=0$. This proves
\eqref{eq:euler-one-step-decrease}.

Summing \eqref{eq:euler-one-step-decrease} over $k=0,\ldots,N-1$ gives
\[
\frac{hc_1}{2}
\sum_{k=0}^{N-1}\|\nabla\Phi(x_k)\|^2
\le
W(u_0)-W(u_N)
\le
W(u_0)-W_{\inf}.
\]
Dividing by $hc_1N/2$ yields \eqref{eq:euler-ergodic-bound}. The averaged
residual bounds follow identically by retaining the corresponding residual
terms in \eqref{eq:euler-one-step-decrease}.
\end{proof}

\begin{remark}
Proposition~\ref{prop:euler-one-step} separates the discretization issue from
the geometry issue. The Euler step only adds the Taylor remainder
$O(h^2\|G(u_k)\|^2)$, while the first-order term is the same Lyapunov
dissipation obtained in continuous time. Consequently, the geometry-dependent
part of the argument is still the deviation-to-residual interface:
\[
\|D_y\|\le C_y\|r_y\|,
\qquad
\|D_z\|\le C_z\|r_z\|.
\]
Under strong convexity, these constants come from Lemma~\ref{lem:deviation}.
Under weaker geometry, they are replaced by the EB/sensitivity constants in
Section~\ref{sec:mere} and Appendix~\ref{app:convex-inner}. Once this interface
is available, both the continuous-time absorption thresholds and the Euler
one-step step-size restriction follow from the same normal-form quantities
$(c_i,M_i)$. Thus the unified template identifies which parts of a discrete-time
analysis are structural and reusable, and which parts change as the inner
geometry weakens.
\end{remark} 

\subsection{Forward-Euler hypercleaning diagnostics}
\label{app:euler-hypercleaning}

\paragraph{Setup for Figure~\ref{fig:euler-heatmap}.}
Figure~\ref{fig:euler-heatmap} reports forward-Euler diagnostics for the
lifted-penalty dynamics on Wine and Digits. We use a standard hypercleaning
setup with one outer variable $w_i$ per training example and sample weight
$s_i(w_i)=(1+\exp(-w_i))^{-1}$. The inner model is a multiclass logistic
classifier with parameter $\theta$. With $30\%$ uniformly corrupted training
labels, the lower-level and validation objectives are
\[
g(w,\theta)
=
\frac{1}{n_{\mathrm{tr}}}
\sum_{i=1}^{n_{\mathrm{tr}}}
s_i(w_i)\ell(\theta;x_i,\widetilde y_i)
+
\frac{\rho_{\rm reg}}{2}\|\theta\|^2,
\qquad
f(\theta)
=
\frac{1}{n_{\mathrm{val}}}
\sum_{j=1}^{n_{\mathrm{val}}}
\ell(\theta;x_j^{\mathrm{val}},y_j^{\mathrm{val}}).
\]
We use the full Wine dataset and a subset of $80$ Digits examples, both with a
$60/40$ train/validation split.

\paragraph{Euler dynamics and thresholds.}
We instantiate the lifted penalty
\[
E_\lambda(w,\theta,\zeta)
=
f(\theta)+\lambda\bigl(g(w,\theta)-g(w,\zeta)\bigr),
\]
where $\theta$ tracks the penalized inner minimizer and $\zeta$ tracks the
lower-level minimizer associated with $g^*(w)$. We discretize the corresponding
flow by forward Euler and scan $(\delta,\eta)$ on logarithmic grids at fixed
$\lambda=10$. For the symmetric Lyapunov choice $\alpha=\beta=\tfrac12$, the
thresholds reduce to
\[
\delta_0=\frac12 C^2,
\qquad
\eta_0=2C^2.
\]
Since $f$ does not depend directly on $w$, we use the empirical conversion scale
\[
\widehat C=\frac{\widehat L_{w\theta}}{\widehat\mu_g},
\]
where $\widehat L_{w\theta}$ is estimated by finite-difference power iteration
and $\widehat\mu_g$ is the empirical curvature of the regularized lower-level
objective. Thus
\[
\widehat\delta_0=\frac12\widehat C^2,
\qquad
\widehat\eta_0=2\widehat C^2.
\]

\paragraph{Metric and interpretation.}
For each pair $(\delta,\eta)$, we report
\[
\frac1N\sum_{k=0}^{N-1}
\left\|
\nabla_w E_\lambda(w_k,\theta_k,\zeta_k)
\right\|^2,
\]
displayed on a base-$10$ logarithmic scale. This is the forward-Euler analogue
of the outer stationarity quantity controlled by the continuous-time Lyapunov
normal form. In both datasets and in both geometry regimes, the region
$\delta\ge\widehat\delta_0,\eta\ge\widehat\eta_0$ gives the smallest average
stationarity. Moreover, reducing $\rho_{\rm reg}$ from $0.05$ to $0.02$ weakens
the empirical inner geometry by roughly a factor of $2.5$. Since the theory
predicts
\[
\widehat\delta_0,\widehat\eta_0 \propto \widehat C^2
=
\left(\frac{\widehat L_{w\theta}}{\widehat\mu_g}\right)^2,
\]
the thresholds should increase by about $2.5^2=6.25$. The observed
deterioration factors are approximately $5.8$--$6.0$ across Wine and Digits,
which is consistent with the predicted $C^2$ scaling up to estimation noise.
Thus the experiment supports the modular S1--S3 interpretation: weakening the
inner geometry changes the deviation-to-residual conversion constant and shifts
the threshold scale, while the Lyapunov construction and absorption rule remain
unchanged. Forward Euler adds a one-step discretization remainder and hence
requires a stable absolute step size, but the relative time-scale boundary
predicted by the continuous-time normal form remains visible in the discretized
dynamics.

\section{min-min-max}\label{sec:minminmax}
\noindent\textbf{Why include min--min--max in the unified framework.}
\noindent
Although the main text focuses on the bilevel case, the Lyapunov/time-scale template in
Section~\ref{sec:unified} applies verbatim to other hierarchical games as well.
The min--min--max problem \eqref{eq:minminmax} is a clean illustrative instance:
the inner problems in $y$ and $z$ decouple into two smooth envelopes $f^*(x)$ and $g^*(x)$,
so the outer objective is the difference $\mathcal L(x)=f^*(x)-g^*(x)$.
Accordingly, the proof follows the same three steps as before:
\textbf{[S1]} smooth envelope gradients,
\textbf{[S2]} deviation-to-residual conversion for $(D_y,D_z)$,
and \textbf{[S3]} absorption of cross terms via sufficiently large time-scale parameters $(\delta,\eta)$.

\begin{assumption}\label{ass:6} Suppose that:
\begin{enumerate}
\item There exist constants $\mu_f,\mu_g>0$ such that for every $x\in\mathbb R^{d_x}$,
the mappings $y\mapsto f(x,y)$ and $z\mapsto g(x,z)$ are $\mu_f$-- and $\mu_g$--strongly convex,
respectively.

\item There exist constants $L_{fx},L_{gx}\ge 0$ such that for all $x$ and all $y,y',z,z'$,
\[
\|\nabla_x f(x,y)-\nabla_x f(x,y')\|\le L_{fx}\|y-y'\|,
\qquad
\|\nabla_x g(x,z)-\nabla_x g(x,z')\|\le L_{gx}\|z-z'\|.
\]

\item Let $f^*(x):=\min_y f(x,y)$, $g^*(x):=\min_z g(x,z)$, and define
\[
\mathcal L(x):=f^*(x)-g^*(x).
\]
Assume $\mathcal L_{\inf}:=\inf_x \mathcal L(x)>-\infty$.
\end{enumerate}

\end{assumption}

\paragraph{Problem Setting.}

We consider
\begin{equation}\label{eq:minminmax}
\min_{x\in\mathbb R^{d_x}}\ \min_{y\in\mathbb R^{d_y}}\ \max_{z\in\mathbb R^{d_z}}
E(x,y,z),
\qquad
E(x,y,z):=f(x,y)-g(x,z).
\end{equation}
For each fixed $x$, 
\[
\min_y\max_z E(x,y,z)
=\min_y\bigl(f(x,y)-\min_z g(x,z)\bigr)
=f^*(x)-g^*(x)
=\mathcal L(x).
\]

\noindent We study the gradient flow associated with \eqref{eq:minminmax}:
\begin{equation}\label{eq:flow}
\begin{aligned}
\dot x(t) &= -\nabla_x E\bigl(x(t),y(t),z(t)\bigr),\\
\dot y(t) &= -\delta\,\nabla_y E\bigl(x(t),y(t),z(t)\bigr),\\
\dot z(t) &= \eta\,\nabla_z E\bigl(x(t),y(t),z(t)\bigr),
\end{aligned}
\end{equation}
with $\delta>0$ and $\eta>0$.
Since $\nabla_y E=\nabla_y f$ and $\nabla_z E=-\nabla_z g$, \eqref{eq:flow} becomes
\begin{equation}\label{eq:flow-expanded}
\begin{aligned}
\dot x &= -\nabla_x f(x,y) + \nabla_x g(x,z),\\
\dot y &= -\delta\,\nabla_y f(x,y),\\
\dot z &= -\eta\,\nabla_z g(x,z).
\end{aligned}
\end{equation}

\noindent Define the nonnegative gaps
\[
H_f(x,y):=f(x,y)-f^*(x)\ (\ge 0),
\qquad
H_g(x,z):=g(x,z)-g^*(x)\ (\ge 0),
\]
and for $\alpha,\beta\in(0,1)$ define
\begin{equation}\label{eq:W-def}
W(x,y,z):=\mathcal L(x)+\alpha\,H_f(x,y)+\beta\,H_g(x,z).
\end{equation}

\begin{theorem}\label{thm:ergodic}
Assume Assumption~\ref{ass:6} and fix $\alpha,\beta\in(0,1)$.
Then there exist constants $c_1>0$ and thresholds $\delta_0>0$, $\eta_0>0$
such that for any $\delta\ge \delta_0$ and $\eta\ge \eta_0$, along any solution
$(x(t),y(t),z(t))$ of \eqref{eq:flow-expanded} we have for a.e.\ $t\ge 0$,
\begin{equation}\label{eq:Wdot-final1}
\frac{d}{dt}W(x(t),y(t),z(t))
\ \le\
-\,c_1\,\|\nabla \mathcal L(x(t))\|^2.
\end{equation}
Consequently, for every $T>0$, if $t\sim\mathrm{Unif}[0,T]$, then
\begin{equation}\label{eq:ergodic-rate}
\mathbb E_t\bigl[\|\nabla \mathcal L(x(t))\|^2\bigr]
\ \le\
\frac{W(x(0),y(0),z(0))-\mathcal L_{\inf}}{c_1\,T}.
\end{equation}
\end{theorem}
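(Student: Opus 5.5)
We follow the S1--S3 template exactly as in the bilevel proof of Theorem~\ref{thm:fixed-lambda-simple}. \textbf{[S1]} By Assumption~\ref{ass:6}.1 the minimizers $y^*(x)$ of $f(x,\cdot)$ and $z^*(x)$ of $g(x,\cdot)$ are unique. Danskin's theorem then gives $\nabla f^*(x)=\nabla_x f(x,y^*(x))$ and $\nabla g^*(x)=\nabla_x g(x,z^*(x))$, hence $A:=\nabla\mathcal L(x)=\nabla_x f(x,y^*)-\nabla_x g(x,z^*)$. Set
\[
r_y:=\nabla_y f(x,y),\qquad r_z:=\nabla_z g(x,z),\qquad D_y:=\nabla_x f(x,y)-\nabla_x f(x,y^*),\qquad D_z:=\nabla_x g(x,z)-\nabla_x g(x,z^*),
\]
matching Table~\ref{tab:normal-form-dictionary_long}. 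From \eqref{eq:flow-expanded} we then have $\dot x=-A-D_y+D_z$, $\dot y=-\delta r_y$, $\dot z=-\eta r_z$.

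\textbf{[Differentiation]} We compute each piece of $\dot W$ by the chain rule:
\[
\dot{\mathcal L}=\langle A,\dot x\rangle,\qquad \dot H_f=\langle D_y,\dot x\rangle-\delta\|r_y\|^2,\qquad \dot H_g=\langle D_z,\dot x\rangle-\eta\|r_z\|^2 .
\]
Substituting $\dot x$ and collecting terms gives
\[
\dot W=\langle A+\alpha D_y+\beta D_z,\,-A-D_y+D_z\rangle-\alpha\delta\|r_y\|^2-\beta\eta\|r_z\|^2 .
\]
Expanding the inner product yields
\[
-\|A\|^2-(1+\alpha)\langle A,D_y\rangle+(1-\beta)\langle A,D_z\rangle+(\alpha-\beta)\langle D_y,D_z\rangle-\alpha\|D_y\|^2+\beta\|D_z\|^2 .
\]
We drop $-\alpha\|D_y\|^2\le 0$. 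The three cross terms are bounded by Young's inequality with parameters $\varepsilon_1,\varepsilon_2$ and $1$, respectively. This gives the normal form \eqref{eq:timescale-premise} with the dictionary constants: $c_0=1-\tfrac{(1+\alpha)\varepsilon_1}{2}-\tfrac{(1-\beta)\varepsilon_2}{2}$, $K_y=\tfrac{1+\alpha}{2\varepsilon_1}+\tfrac{|\alpha-\beta|}{2}$, $K_z=\tfrac{1-\beta}{2\varepsilon_2}+\tfrac{|\alpha-\beta|}{2}+\beta$, and $(\tau_y,\tau_z)=(\delta,\eta)$. To make $c_0>0$ explicit, we choose for instance $\varepsilon_1=\tfrac{1}{2(1+\alpha)}$ and $\varepsilon_2=\tfrac{1}{2(1-\beta)}$, so that $c_1:=c_0=\tfrac12$.

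\textbf{[S2]} We apply Lemma~\ref{lem:deviation} with $\psi=f$ and with $\psi=g$, using Assumption~\ref{ass:6}.1--2. This gives $\|D_y\|\le (L_{fx}/\mu_f)\|r_y\|$ and $\|D_z\|\le (L_{gx}/\mu_g)\|r_z\|$. \textbf{[S3]} Set
\[
\delta_0:=\frac{K_y}{\alpha}\Bigl(\frac{L_{fx}}{\mu_f}\Bigr)^2,\qquad \eta_0:=\frac{K_z}{\beta}\Bigl(\frac{L_{gx}}{\mu_g}\Bigr)^2 .
\]
Lemma~\ref{lem:timescale} then gives \eqref{eq:Wdot-final1}. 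Integrating over $[0,T]$ and using $W\ge\mathcal L\ge\mathcal L_{\inf}$, which holds because $H_f,H_g\ge0$, yields \eqref{eq:ergodic-rate}.

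The main obstacle is the sign bookkeeping in the expansion, where the signs differ from the bilevel case. Here the $D_y$ coefficient is $-(1+\alpha)$ rather than $-(1-\alpha)$. The sign of $\|D_z\|^2$ is $+\beta$, so it must be absorbed through $K_z$ rather than dropped. One should check that $\langle D_z,D_z\rangle$ enters with coefficient $\beta$ and $\langle D_y,D_y\rangle$ with $-\alpha$. Everything else is routine.
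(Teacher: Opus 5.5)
Your proposal is correct and follows essentially the same route as the paper's proof. You use the same S1 envelope formula, the same decomposition $\dot x=-A-D_y+D_z$, the same expansion of $\dot W$ (including keeping $+\beta\|D_z\|^2$ inside $K_z$), the same Young constants, and the thresholds $\delta_0=K_yC_y^2/\alpha$, $\eta_0=K_zC_z^2/\beta$. The only differences are cosmetic: you invoke Lemma~\ref{lem:deviation} and Lemma~\ref{lem:timescale} where the paper argues inline, and you fix $\varepsilon_1=\tfrac{1}{2(1+\alpha)}$, $\varepsilon_2=\tfrac{1}{2(1-\beta)}$ to get the explicit constant $c_1=\tfrac12$, where the paper only takes $\varepsilon_1,\varepsilon_2$ "sufficiently small."
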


\begin{proof}
\textbf{[S1: Smooth envelopes.]}
\noindent By \textnormal{(A1)}, for each $x$ the minimizers
\[
y^*(x)\in\arg\min_{y} f(x,y),\qquad z^*(x)\in\arg\min_{z} g(x,z)
\]
are unique and satisfy 
\[
\nabla_y f\bigl(x,y^*(x)\bigr)=\mathbf 0,
\qquad
\nabla_z g\bigl(x,z^*(x)\bigr)=\mathbf 0.
\]
Define
\[
\begin{aligned}
f^*(x) &:= \min_{y} f(x,y) \;=\; f\bigl(x,y^*(x)\bigr),\\
g^*(x) &:= \min_{z} g(x,z) \;=\; g\bigl(x,z^*(x)\bigr),\\
\mathcal L(x) &:= f^*(x) - g^*(x).
\end{aligned}
\]
We compute
\[
\nabla f^*(x)
=\nabla_x f\bigl(x,y^*(x)\bigr)
+\bigl(Dy^*(x)\bigr)^{\top}\nabla_y f\bigl(x,y^*(x)\bigr)
=\nabla_x f\bigl(x,y^*(x)\bigr),
\]
and similarly,
\[
\nabla g^*(x)
=\nabla_x g\bigl(x,z^*(x)\bigr)
+\bigl(Dz^*(x)\bigr)^{\top}\nabla_z g\bigl(x,z^*(x)\bigr)
=\nabla_x g\bigl(x,z^*(x)\bigr).
\]
Therefore,
\begin{equation}\label{eq:L-grad}
\nabla \mathcal L(x)=\nabla_x f\bigl(x,y^*(x)\bigr)-\nabla_x g\bigl(x,z^*(x)\bigr).
\end{equation}

\smallskip
\noindent Set
\[
A:=\nabla \mathcal L(x)\in\mathbb R^{d_x},
\qquad
r_y:=\nabla_y f(x,y)\in\mathbb R^{d_y},
\qquad
r_z:=\nabla_z g(x,z)\in\mathbb R^{d_y},
\]
and define the \emph{$x$-gradient deviations} (mismatches)
\[
D_y:=\nabla_x f(x,y)-\nabla_x f\bigl(x,y^*(x)\bigr),
\qquad
D_z:=\nabla_x g(x,z)-\nabla_x g\bigl(x,z^*(x)\bigr).
\]

\medskip
\textbf{[S2: Deviation-to-residual conversion.]}
\noindent Fix $x$. Since $y\mapsto f(x,y)$ is $\mu_f$--strongly convex, for all $y$,
\[
\big\langle \nabla_y f(x,y)-\nabla_y f\bigl(x,y^*(x)\bigr),\,y-y^*(x)\big\rangle
\ \ge\ \mu_f\|y-y^*(x)\|^2.
\]
Using $\nabla_y f(x,y^*(x))=\mathbf 0$, we obtain
\[
\mu_f\|y-y^*(x)\|^2\le \langle r_y,\,y-y^*(x)\rangle
\le \|r_y\|\,\|y-y^*(x)\|,
\]
hence
\begin{equation}\label{eq:track-y}
\|y-y^*(x)\|\ \le\ \frac{1}{\mu_f}\,\|r_y\|.
\end{equation}
Similarly, since $z\mapsto g(x,z)$ is $\mu_g$--strongly convex,
\begin{equation}\label{eq:track-z}
\|z-z^*(x)\|\ \le\ \frac{1}{\mu_g}\,\|r_z\|.
\end{equation}

\smallskip
\noindent By Assumption~\ref{ass:6},
\[
\|D_y\|
=\bigl\|\nabla_x f(x,y)-\nabla_x f(x,y^*(x))\bigr\|
\le L_{fx}\|y-y^*(x)\|
\le \frac{L_{fx}}{\mu_f}\|r_y\|
=:C_y\|r_y\|,
\]
and
\[
\|D_z\|
=\bigl\|\nabla_x g(x,z)-\nabla_x g(x,z^*(x))\bigr\|
\le L_{gx}\|z-z^*(x)\|
\le \frac{L_{gx}}{\mu_g}\|r_z\|
=:C_z\|r_z\|.
\]
Thus,
\begin{equation}\label{eq:DyDz-bound}
\|D_y\|\le C_y\|r_y\|,
\qquad
\|D_z\|\le C_z\|r_z\|.
\end{equation}

\medskip
\textbf{[S3: Lyapunov derivative and absorption via time-scale separation.]}
\noindent Recall $E(x,y,z)=f(x,y)-g(x,z)$. Then
\[
\nabla_x E=\nabla_x f(x,y)-\nabla_x g(x,z),
\qquad
\nabla_y E=\nabla_y f(x,y)=r_y,
\qquad
\nabla_z E=-\nabla_z g(x,z)=-r_z.
\]
Hence the scaled gradient flow \eqref{eq:flow-expanded} can be written as
\[
\dot y=-\delta r_y,
\qquad
\dot z=-\eta r_z.
\]
Moreover, by \eqref{eq:L-grad} and the definitions of $D_y,D_z$,
\[
\nabla_x f(x,y)=\nabla_x f\bigl(x,y^*(x)\bigr)+D_y,
\qquad
\nabla_x g(x,z)=\nabla_x g\bigl(x,z^*(x)\bigr)+D_z,
\]
so
\[
\dot x
=-\nabla_x f(x,y)+\nabla_x g(x,z)
=-(\nabla_x f(x,y^*(x))+D_y)+(\nabla_x g(x,z^*(x))+D_z)
=-A-D_y+D_z.
\]
Thus
\begin{equation}\label{eq:xdot1}
\dot x=-A-D_y+D_z.
\end{equation}

\medskip
\noindent Since $\mathcal L$ depends only on $x$,
\begin{align}
\dot{\mathcal L}(x(t))
&=\big\langle \nabla\mathcal L(x),\,\dot x\big\rangle
=\langle A,-A-D_y+D_z\rangle
\nonumber\\
&=-\|A\|^2-\langle A,D_y\rangle+\langle A,D_z\rangle.
\label{eq:Ldot}
\end{align}

\smallskip
\noindent Define the nonnegative gaps (unified notation)
\[
H_y(x,y):=f(x,y)-f^*(x),\qquad
H_z(x,z):=g(x,z)-g^*(x).
\]
Using $\nabla f^*(x)=\nabla_x f(x,y^*(x))$ and $\nabla g^*(x)=\nabla_x g(x,z^*(x))$, we have
\[
\nabla_x H_y(x,y)=D_y,\quad \nabla_y H_y(x,y)=r_y,
\qquad
\nabla_x H_z(x,z)=D_z,\quad \nabla_z H_z(x,z)=r_z.
\]
Therefore, along the flow,
\begin{align}
\dot H_y
&=\langle D_y,\dot x\rangle+\langle r_y,\dot y\rangle
=\langle D_y,-A-D_y+D_z\rangle-\delta\|r_y\|^2
\nonumber\\
&=-\langle D_y,A\rangle-\|D_y\|^2+\langle D_y,D_z\rangle-\delta\|r_y\|^2,
\label{eq:Hy-dot}
\end{align}
and
\begin{align}
\dot H_z
&=\langle D_z,\dot x\rangle+\langle r_z,\dot z\rangle
=\langle D_z,-A-D_y+D_z\rangle-\eta\|r_z\|^2
\nonumber\\
&=-\langle D_z,A\rangle-\langle D_z,D_y\rangle+\|D_z\|^2-\eta\|r_z\|^2.
\label{eq:Hz-dot}
\end{align}

\medskip
\noindent Recall $W=\mathcal L+\alpha H_y+\beta H_z$. Combining \eqref{eq:Ldot}, \eqref{eq:Hy-dot}, and \eqref{eq:Hz-dot},
\begin{align}
\dot W
&=
-\|A\|^2
-(1+\alpha)\langle A,D_y\rangle
+(1-\beta)\langle A,D_z\rangle
-\alpha\|D_y\|^2
+(\alpha-\beta)\langle D_y,D_z\rangle
+\beta\|D_z\|^2
-\alpha\delta\|r_y\|^2
-\beta\eta\|r_z\|^2.
\label{eq:Wdot-raw}
\end{align}
For an upper bound, we may drop the nonpositive term $-\alpha\|D_y\|^2\le 0$.

\medskip
\noindent Fix $\varepsilon_1,\varepsilon_2>0$. By Cauchy--Schwarz and Young's inequality,
\[
-(1+\alpha)\langle A,D_y\rangle
\le (1+\alpha)\Bigl(\frac{\varepsilon_1}{2}\|A\|^2+\frac{1}{2\varepsilon_1}\|D_y\|^2\Bigr),
\]
\[
(1-\beta)\langle A,D_z\rangle
\le (1-\beta)\Bigl(\frac{\varepsilon_2}{2}\|A\|^2+\frac{1}{2\varepsilon_2}\|D_z\|^2\Bigr),
\]
\[
(\alpha-\beta)\langle D_y,D_z\rangle
\le \frac{|\alpha-\beta|}{2}\bigl(\|D_y\|^2+\|D_z\|^2\bigr).
\]
Substituting into \eqref{eq:Wdot-raw} yields
\begin{equation}\label{eq:Wdot-mid}
\dot W
\le
-\,c_1\|A\|^2
+K_y\|D_y\|^2
+K_z\|D_z\|^2
-\alpha\delta\|r_y\|^2
-\beta\eta\|r_z\|^2,
\end{equation}
where
\[
c_1:=1-\frac{(1+\alpha)\varepsilon_1}{2}-\frac{(1-\beta)\varepsilon_2}{2},
\qquad
K_y:=\frac{1+\alpha}{2\varepsilon_1}+\frac{|\alpha-\beta|}{2},
\qquad
K_z:=\frac{1-\beta}{2\varepsilon_2}+\frac{|\alpha-\beta|}{2}+\beta.
\]
Choose $\varepsilon_1,\varepsilon_2>0$ sufficiently small so that $c_1>0$.

\medskip
\noindent Using \eqref{eq:DyDz-bound} in \eqref{eq:Wdot-mid},
\[
K_y\|D_y\|^2-\alpha\delta\|r_y\|^2
\le (K_yC_y^2-\alpha\delta)\|r_y\|^2,
\qquad
K_z\|D_z\|^2-\beta\eta\|r_z\|^2
\le (K_zC_z^2-\beta\eta)\|r_z\|^2.
\]
Hence
\[
\dot W
\le -c_1\|A\|^2 + (K_yC_y^2-\alpha\delta)\|r_y\|^2 + (K_zC_z^2-\beta\eta)\|r_z\|^2.
\]
Define thresholds
\[
\delta_0:=\frac{K_yC_y^2}{\alpha},
\qquad
\eta_0:=\frac{K_zC_z^2}{\beta}.
\]
If $\delta\ge \delta_0$ and $\eta\ge \eta_0$, then the last two coefficients are nonpositive, and we obtain
\begin{equation}\label{eq:Wdot-final2}
\dot W \ \le\ -c_1\|A\|^2 \ =\ -c_1\|\nabla\mathcal L(x)\|^2,
\end{equation}
as desired.

\medskip
\noindent Since $H_y\ge 0$ and $H_z\ge 0$, we have $W(x,y,z)\ge \mathcal L(x)\ge \mathcal L_{\inf}$.
Integrating \eqref{eq:Wdot-final2} over $[0,T]$ yields
\[
c_1\int_0^T \|\nabla \mathcal L(x(t))\|^2\,dt
\le W(x(0),y(0),z(0)) - W(x(T),y(T),z(T))
\le W(x(0),y(0),z(0))-\mathcal L_{\inf}.
\]
If $t\sim\mathrm{Unif}[0,T]$, dividing by $T$ gives \eqref{eq:ergodic-rate}.
\end{proof}

\end{document}